\DeclareSymbolFontAlphabet{\mathbb}{AMSb}
\DeclareSymbolFontAlphabet{\mathbbl}{bbold}
\renewcommand{\epsilon}{\varepsilon}
\renewcommand{\phi}{\varphi}
\setlist{nosep}
\setlist*[enumerate]{label*=(\roman*)}
\newcommand{\myitem}[1]{%
\item[\text{#1}]\protected@edef\@currentlabel{\text{#1}}%
}
\definecolor{darkblue}{rgb}{0.2,0,0.6}
\definecolor{darkgreen}{rgb}{0.2,0.5,0.2}
\crefname{enumi}{}{}
\crefname{equation}{}{}
\crefname{section}{Section}{Sections}
\crefname{appendix}{Appendix}{Appendices}
\declaretheorem[%
    style=definition,%
    numberwithin=section,%
    qed=$\lrcorner$,%
]{definition}
\declaretheorem[%
    style=definition,%
    numberwithin=section,%
    sibling=definition,%
    qed=$\lrcorner$,%
]{remark,example,notation}
\declaretheorem[%
    style=definition,%
    numberwithin=section,%
    sibling=definition,%
]{theorem,lemma,corollary,proposition}
\crefname{theorem}{Theorem}{Theorems}
\crefname{definition}{Definition}{Definitions}
\crefname{lemma}{Lemma}{Lemmas}
\crefname{corollary}{Corollary}{Corollaries}
\crefname{proposition}{Proposition}{Propositions}
\crefname{remark}{Remark}{Remarks}
\crefname{example}{Example}{Examples}
\crefname{notation}{Notation}{Notations}
\tikzstyle{huge} = [row sep={3.6em}, column sep={3.6em}]
\tikzstyle{large} = [row sep={2.7em}, column sep={2.7em}]
\tikzstyle{normal} = [row sep={1.8em}, column sep={1.8em}]
\tikzstyle{scriptsize} = [row sep={1.35em}, column sep={1.35em}]
\tikzstyle{small} = [row sep={0.9em}, column sep={0.9em}]
\tikzstyle{tiny} = [row sep={0.45em}, column sep={0.45em}]
\def\l@subsection{\@tocline{2}{0pt}{2pc}{6pc}{}}
\renewcommand{\-}{\mathchar`-}
\newcommand{\A}{\mathscr{A}}
\newcommand{\C}{\mathscr{C}}
\newcommand{\D}{\mathscr{D}}
\newcommand{\E}{\mathscr{E}}
\newcommand{\M}{\mathscr{M}}
\newcommand{\X}{\mathscr{X}}
\newcommand{\bA}{\mathbb{A}}
\newcommand{\bB}{\mathbb{B}}
\newcommand{\bC}{\mathbb{C}}
\newcommand{\bF}{\mathbb{F}}
\newcommand{\bI}{\mathbb{I}}
\newcommand{\bN}{\mathbb{N}}
\newcommand{\bR}{\mathbb{R}}
\newcommand{\bS}{\mathbb{S}}
\newcommand{\bT}{\mathbb{T}}
\newcommand{\bZ}{\mathbb{Z}}
\newcommand{\bfE}{\mathbf{E}}
\newcommand{\bfM}{\mathbf{M}}
\newcommand{\Set}{\mathbf{Set}}
\newcommand{\Cat}{\mathbf{Cat}}
\newcommand{\CAT}{\mathbf{CAT}}
\newcommand{\Mon}{\mathbf{Mon}}
\newcommand{\Grp}{\mathbf{Grp}}
\newcommand{\Pos}{\mathbf{Pos}}
\newcommand{\POS}{\mathbf{POS}}
\newcommand{\Ban}{\mathbf{Ban}}
\newcommand{\Fuz}{\mathbf{Fuz}_{[0,\infty)}}
\newcommand{\pMet}{{\mathbf{Met}_{*}}}
\newcommand{\gMet}{{\mathbf{Met}_{\infty}}}
\newcommand{\2}{\mathbbl{2}}
\newcommand{\op}{\mathrm{op}}
\newcommand{\ob}{\mathrm{ob}}
\newcommand{\mor}{\mathrm{mor}}
\newcommand{\dom}{\mathop{\mathrm{dom}}}
\newcommand{\cod}{\mathop{\mathrm{cod}}}
\newcommand{\pto}{\rightharpoonup} 
\newcommand{\Id}{\mathrm{Id}}
\newcommand{\id}{\mathrm{id}}
\newcommand{\Colim}[1]{\mathop{\underset{#1}{\mathrm{Colim}}}}
\newcommand{\incat}[1]{~~\text{in }{#1}}
\newcommand{\tup}[1]{\vec{#1}}
\newcommand{\copower}{\cdot}
\newcommand{\arr}[1][]{\mathrel{
\tikz\draw[->] (0,0) -- node[above=1.8pt,inner sep=0pt] {\small$#1$} (0.5,0);
}}
\newcommand{\longarr}[1][]{\mathrel{
\tikz\draw[->] (0,0) -- node[above=1.8pt,inner sep=0pt] {\small$#1$} (1,0);
}}
\newcommand{\Mod}{\mathop{\mathbf{Mod}}}
\newcommand{\presn}[2]{{#1}_{#2\mathrm{p}}}
\newcommand{\lpresn}[1]{\presn{#1}{\lambda}}
\newcommand{\intpn}[2]{\mathord{\left\llbracket #1 \right\rrbracket_{#2}}} 
\newcommand{\defined}{\mathord{\downarrow}}
\newcommand{\seq}[1]{\mathrel{
\tikz\draw[|-] (0,0) -- node[above=1.8pt,inner sep=0pt] {\small$#1$} (1,0);
}}
\newcommand{\biseq}[1]{\mathrel{
\tikz\draw[|-|] (0,0) -- node[above=1.8pt,inner sep=0pt] {\small$#1$} (1,0);
}}
\newcommand{\longseq}[1]{\mathrel{
\tikz\draw[|-] (0,0) -- node[above=1.8pt,inner sep=0pt] {\small$#1$} (1.5,0);
}}
\newcommand{\PStr}{\mathop{\mathbf{PStr}}}
\newcommand{\PMod}{\mathop{\mathbf{PMod}}}
\newcommand{\Term}{\mathrm{Term}}
\newcommand{\repn}[1]{\mathord{\left\langle #1 \right\rangle}} 
\newcommand{\ofsort}{\mathord{:}}
\newcommand{\PHL}{\mathrm{PHL}}
\newcommand{\mon}{\mathrm{mon}}
\newcommand{\pos}{\mathrm{pos}}
\newcommand{\quiv}{\mathrm{quiv}}
\newcommand{\cat}{\mathrm{cat}}
\newcommand{\rsrel}{\mathrm{rsrel}}
\newcommand{\mgset}{\mathrm{mgset}}
\newcommand{\met}{\mathrm{met}}
\newcommand{\fuz}{\mathrm{fuz}}
\newcommand{\pmet}{{\mathrm{met}{*}}}
\newcommand{\sort}{\mathord{*}}
\newcommand{\ar}{\mathrm{ar}}
\newcommand{\outsort}{\mathrm{sort}}
\newcommand{\Alg}{\mathop{\mathbf{Alg}}}
\newcommand{\funcAlg}{\mathop{\mathrm{Alg}}}
\newcommand{\pht}[2]{{\bT_{#1,#2}}} 
\newcommand{\mnd}[2]{{T_{#1,#2}}} 
\newcommand{\Th}{\mathbf{Th}}
\newcommand{\Thl}{\mathbf{Th}_{\lambda}}
\newcommand{\Mndf}{\mathop{\mathbf{Mnd}_{\mathrm{f}}}}
\newcommand{\Mndl}{\mathop{\mathbf{Mnd}_{\lambda}}}
\newcommand{\EM}{\mathop{\mathbf{EM}}}
\newcommand{\funcEM}{\mathop{\mathrm{EM}}}
\newcommand{\cloprod}{\mathbf{P}}
\newcommand{\clocsub}[1]{\mathbf{S_{#1}}}
\newcommand{\clolret}[2]{\mathbf{H_{#1}^{#2}}}
\newcommand{\orth}[1]{{#1}^\bot} 
\newcommand{\rorth}{\mathbf{r}}
\newcommand{\lorth}{\mathbf{l}}
\title{Relativized universal algebra via partial Horn logic}
\author{Yuto Kawase}
\address{Research Institute for Mathematical Sciences, Kyoto University, Kyoto 606-8502, Japan}
\email{ykawase@kurims.kyoto-u.ac.jp}
\date{\today}
\keywords{
    partial Horn theory, %
    locally presentable category, %
    accessible monad, %
    Birkhoff's variety theorem, %
    HSP theorem, %
    universal algebra%
}
\thanks{This is a revised version of the author's master's thesis submitted to Research Institute for Mathematical Sciences on January 15, 2024. The author wishes to thank his supervisor, Masahito Hasegawa, for his support, and Hayato Nasu for valuable corrections. He also thanks Ji\v{r}\'{\i} Rosick\'{y} and an anonymous referee of an earlier version of this paper for helpful comments, especially regarding relevant literature.}
\subjclass[2020]{
    18C10, 
    18C15, 
    18C35, 
    18E45
}
\begin{document}
\begin{abstract}
    Algebraic theories, sometimes called equational theories, are syntactic notions given by finitary operations and equations, such as monoids, groups, and rings.
    There is a well-known category-theoretic treatment of them that algebraic theories are equivalent to finitary monads on $\Set$.
    In this paper, using partial Horn theories, we syntactically generalize such an equivalence to arbitrary locally presentable categories from $\Set$;
    the corresponding algebraic concepts relative to locally presentable categories are called relative algebraic theories.
    Finally, we give a framework for universal algebra relative to locally presentable categories by generalizing Birkhoff's variety theorem.
\end{abstract}

\maketitle

\tableofcontents

\section{Introduction}
Algebraic theories, also called equational theories, are algebraic concepts given by several finitary operations and equations, which include monoids, groups, rings, etc.
The first unified investigation of algebraic theories was taken by Birkhoff \cite{birkhoff1935structure}, which marked the beginning of \emph{universal algebra}.
There is an elegant category-theoretic treatment of algebraic theory due to Linton, who implicitly showed in his paper \cite{linton1969outline} that algebraic theories correspond to finitary monads on $\Set$, the category of sets.
More precisely, we have an equivalence of categories
\begin{equation}\label{eq:equiv_th_and_mndf}
    \Th \simeq \Mndf(\Set),
\end{equation}
where $\Th$ and $\Mndf(\Set)$ denote the categories of algebraic theories and finitary monads on $\Set$, respectively.
It should be emphasized that the equivalence \cref{eq:equiv_th_and_mndf} preserves the concept of models; that is, it commutes (up to natural isomorphism) with the functors sending each algebraic theory to the category of its algebras and sending each finitary monad to its Eilenberg--Moore category.

Analogy brings us to the relativization of algebraic concepts:
we should regard a finitary monad on a general category $\A$ as an algebraic theory relative to $\A$.
Indeed, it is well-known that finitary monads on $\Set^S$, the functor category from a discrete category $S$ to $\Set$, correspond to $S$-sorted algebraic theories.
Furthermore, the base category can be generalized to an arbitrary locally presentable category rather than $\Set^S$, and there are several general frameworks for algebraic theories based on such monads \cite{kelly1993adjunctions,berger2012monads,bourke2019monads}.
Some authors also described what the algebra for such extended algebraic theories is by giving a practical presentation for monads \cite{rosicky2021metric,lucyshyn2023diagram}.
However, these presentations are not so syntactic in the sense that neither logical terms nor logical formulas appear explicitly, while Birkhoff's classical universal algebra is built upon logical symbols.
On the other hand, in several special cases, there are more syntactic approaches.
One of them is taken in \cite{ford2021monads}:
by giving ``algebraic theories'' relative to a category of models of relational Horn theories in a syntactic way, the authors describe monads on that category.
Another one is taken in \cite{adamek2021finitary}:
by giving ``algebraic theories'' relative to $\Pos$, the category of posets, the authors syntactically describe finitary monads on $\Pos$.

We aim to give a framework for universal algebra relative to locally presentable categories by describing monads on an arbitrary locally presentable category by logical terms and formulas.
To do so, we will introduce \emph{relative algebraic theories}, which are algebraic concepts relative to locally presentable categories and will be a syntactic description of such monads.
In an ordinary algebraic theory, each operator $\omega$ has a natural number $n$ called an \emph{arity}.
Here, $\omega$ represents an ($n$-ary) operator taking $n$ elements $x_1,\dots,x_n$ as input and receiving one element $\omega(x_1,\dots,x_n)$ as output.
In contrast, in our relative algebraic theory, each operator $\omega$ has a logical formula $\phi(x_1,\dots,x_n)$ as its arity.
Then, $\omega$ represents an $n$-ary partial operator that returns an output only if the formula $\phi(x_1,\dots,x_n)$ is satisfied.
A toy example is the subtraction of natural numbers:
$x-y$ is defined only if $y\le x$ holds; hence, the subtraction ``$-$'' on $\bN$ is a binary partial operator with an arity $y\le x$.
Furthermore, we treat not only ordinary finitary operators but also infinitary operators.
However, we assume that the arity of operators is uniformly bounded by an infinite regular cardinal $\lambda$.
This restriction is intended to focus our interest on \emph{accessible monads}, i.e., monads preserving $\lambda$-filtered colimits for some $\lambda$.

Our relative algebraic theories are based on \emph{partial Horn theories} \cite{palmgren2007partial}, which are logical theories characterizing locally presentable categories; that is, a category is locally presentable if and only if it is equivalent to the category of models of some partial Horn theory.
More precisely, for a partial Horn theory $\bS$, we will define \emph{$\bS$-relative $\lambda$-ary algebraic theories} and show that they are equivalent to $\lambda$-ary monads on $\PMod\bS$, i.e., monads on the category of models of $\bS$ that preserve $\lambda$-filtered colimits.
Consequently, we have an equivalence of categories
\begin{equation}\label{eq:equiv_rat_and_mnd_introduction}
    \Thl^\bS \simeq \Mndl(\PMod\bS),
\end{equation}
where $\Thl^\bS$ and $\Mndl(\PMod\bS)$ denote the categories of $\bS$-relative $\lambda$-ary algebraic theories and $\lambda$-ary monads on $\PMod\bS$.
The equivalence \cref{eq:equiv_rat_and_mnd_introduction} is one of our main results, which subsumes the classical theory--monad equivalence \cref{eq:equiv_th_and_mndf}.
Since every locally presentable category is written as $\PMod\bS$ for some partial Horn theory $\bS$, we have characterized monads on an arbitrary locally presentable category.
In addition, this allows us to call $\bS$-relative algebras $\A$-relative algebras to the extent that there is no confusion when an arbitrary locally presentable category $\A$ is given and $\A\simeq \PMod\bS$ holds.

Although Freyd's \emph{essentially algebraic theories} also characterize locally presentable categories, we use partial Horn theories rather than them since we want to treat relation symbols explicitly, such as the order $\le$.

Birkhoff's variety theorem is one of the fundamental results in ordinary algebraic theories, which characterizes equationally definable full subcategories via closure properties.
There is a many-sorted version (in our language, the $\Set^S$-relative version) of Birkhoff's theorem:
\begin{theorem}[\cite{adamek2012birkhoffs}]\label{thm:many_sorted_birkhoff}
    Let $(\Omega, E)$ be a (many-sorted) algebraic theory, where $\Omega,E$ is the sets of operators and equations.
    Then, a full subcategory $\E\subseteq\Alg(\Omega,E)$ of the category of algebras is definable by equations if and only if it is closed under:
    \begin{itemize}[nosep]
        \item products,
        \item subobjects,
        \item quotients,
        \item filtered colimits.
    \end{itemize}
\end{theorem}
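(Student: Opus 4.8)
The plan is to prove the two implications separately, the reverse one (``closure implies definability'') being the substantial direction. For the easy direction, suppose $\E=\Alg(\Omega,E\cup E')$ for some set $E'$ of equations, and check that satisfaction of a single equation $s=t$ in a finite context $\Gamma$ is inherited by each of the four constructions: for a product one evaluates componentwise; for a subalgebra one restricts a valuation; for a quotient $q\colon M\twoheadrightarrow N$ one lifts a valuation along the surjection $q$, uses $M\models s=t$, and pushes forward; and for a filtered colimit one uses that the finite context $\Gamma$ factors through some stage of the colimit, where the equation already holds. Each verification is routine, the only point of care being the finiteness of $\Gamma$ in the filtered-colimit case.

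For the hard direction, suppose $\E$ is closed under the four operations, and let $E'$ be the set of all equations in finite contexts satisfied by every algebra in $\E$. Then $\E\subseteq\Alg(\Omega,E\cup E')$ holds by definition, so the task is the reverse inclusion: given $A\models E\cup E'$, show $A\in\E$. I would realize $A$ as a homomorphic image of a relatively free algebra built from $\E$. For a finite context $\Gamma$, let $F(\Gamma)$ be the free algebra and let $R(\Gamma)=F(\Gamma)/\theta$ be its quotient by the congruence $\theta$ identifying $s,t$ exactly when $(s=t)\in E'$; by construction every homomorphism from $F(\Gamma)$ to an algebra in $\E$ factors through the quotient map $F(\Gamma)\twoheadrightarrow R(\Gamma)$. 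The crux is the claim that $R(\Gamma)\in\E$. To see it, note that for each pair $(s,t)\notin\theta$ there is, by definition of $E'$, some $M\in\E$ with a valuation separating $s$ and $t$; replacing $M$ by the subalgebra generated by the finitely many images of $\Gamma$, which again lies in $\E$ by closure under subobjects, I may take $M$ finitely generated. Up to isomorphism there is only a set of such finitely generated algebras, each admitting only a set of valuations of $\Gamma$, so the associated product is set-indexed; the evaluation map embeds $R(\Gamma)$ into this product, and closure under products and subobjects yields $R(\Gamma)\in\E$.

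It remains to pass from finite contexts to $A$ itself. Taking $X$ to be the underlying sorted set of $A$, the relatively free algebra $R(X)$ is the filtered colimit of the $R(\Gamma)$ over the finite subcontexts $\Gamma\subseteq X$, so closure under filtered colimits gives $R(X)\in\E$. Finally, the surjection $F(X)\twoheadrightarrow A$ determined by the identity valuation factors through $R(X)$ precisely because $A\models E'$, exhibiting $A$ as a homomorphic image of $R(X)$, and closure under quotients then delivers $A\in\E$. I expect the main obstacle to be the claim $R(\Gamma)\in\E$, where one must simultaneously verify that finitely generated members of $\E$ separate every pair left apart by $\theta$ and control the size of the separating family so that the product remains set-indexed; this is exactly the step in which closure under subobjects (to pass to finitely generated witnesses) and under products (to assemble them) are both indispensable, while it is closure under filtered colimits that subsequently lets this finitary argument reach algebras on arbitrarily large generating sets.
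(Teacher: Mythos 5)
Your proof is correct, but it is not the route the paper takes: the paper never proves \cref{thm:many_sorted_birkhoff} directly --- it quotes it from \cite{adamek2012birkhoffs} and recovers it only later as the special case $\bS=(S,\varnothing,\varnothing)$, $\PMod\bS=\Set^S$, of the Birkhoff-type theorem for partial Horn theories (\cref{thm:birkhoff_partialHorn}, specialized to relative algebraic theories in \cref{thm:birkhoff_for_rat}), where ``quotients'' become $U$-retracts (over $\Set^S$ these are exactly the surjective homomorphisms, by the axiom of choice) and ``subobjects'' become $\Sigma$-closed subobjects (vacuous for $\Sigma=\varnothing$). The paper's proof of that general theorem is categorical: definable subcategories are characterized as small-orthogonality classes with respect to dense morphisms, using the (dense, closed--mono) presentable proper factorization system (\cref{thm:dense_closedmono_factorization}), epi-reflectivity (\cref{cor:E-reflective}), and the co-intersection theorem (\cref{thm:co-intersection}). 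You instead give the classical, elementary universal-algebra argument: let $E'$ be all equations valid in $\E$, form the relatively free algebras $R(\Gamma)=F(\Gamma)/\theta$ in finite contexts, place them in $\E$ by separating distinct $\theta$-classes via a set-indexed product of finitely generated members of $\E$ together with closure under subobjects, reach arbitrary generating sets through the filtered colimit $\Colim{\Gamma}R(\Gamma)$, and conclude by closure under quotients. What your approach buys is self-containedness and concreteness (terms, congruences, valuations); what the paper's machinery buys is the generalization your argument cannot reach --- infinitary and partial operations over an arbitrary locally presentable base, where free-algebra-modulo-congruence constructions and element-wise separation are unavailable. Two harmless points you should make explicit: (i) ``closed under products'' must include the empty product (the terminal algebra), otherwise degenerate cases (e.g.\ when $\Gamma$ admits no valuation into any member of $\E$) break the claim $R(\Gamma)\in\E$; (ii) since carriers may be empty, satisfaction of an equation depends on its context, so the connecting maps $R(\Gamma)\to R(\Gamma')$ need not be injective and the intrinsically defined relatively free algebra on $X$ need not coincide with $\Colim{\Gamma}R(\Gamma)$ --- your argument stands as long as $R(X)$ is taken to \emph{mean} that colimit, since the factorization $F(X)\to R(X)\to A$ only invokes equations in finite contexts, which is precisely $A\models E'$.
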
\noindent
Note that this theorem contains Birkhoff's original one as the single-sorted case.
In this paper, we generalize the above theorem to our relative algebraic theories (\cref{thm:birkhoff_for_rat}).
This generalization is another main result, which subsumes not only \cref{thm:many_sorted_birkhoff} but also Birkhoff's theorem for quasi-varieties \cite[3.22 Theorem]{adamek1994locally}.
In this generalization, we will replace subobjects with ``$\Sigma$-closed subobjects'' and quotients with ``$U$-retracts.''

The classical Birkhoff's theorem (\cref{thm:many_sorted_birkhoff}) does not depend on syntax because quotients can be characterized by regular epimorphisms, and hence all the conditions of \cref{thm:many_sorted_birkhoff} are purely category-theoretic.
In contrast, our theorem depends on syntax because the concept of ``$\Sigma$-closed subobjects'' depends on the choice of syntax for the base category $\A$ (\cref{eg:birkhoff_thm_depends_on_syntax}).
This is a notable feature of our generalization of Birkhoff's theorem, and illuminates that our theorem is partial Horn theoretic rather than category-theoretic.

Even though closure under filtered colimits is required in the many-sorted case (\cref{thm:many_sorted_birkhoff}) and our generalized one, it is not required in Birkhoff's original one \cite{birkhoff1935structure}.
In other words, closure under filtered colimits can be eliminated from the $\Set$-relative case, which is expressed by saying that \emph{filtered colimit elimination} holds.
It is known that filtered colimit elimination also holds in some other cases:
finite-sorted algebras ($\Set^n$-relative algebras) \cite{adamek2012birkhoffs}, ordered algebras ($\Pos$-relative algebras) \cite{bloom1976varieties}, metric algebras ($\gMet$-relative algebras) \cite{hino2016varieties}.

In this paper, we will give a sufficient condition on a locally presentable category $\A$ for which filtered colimit elimination holds in $\A$-relative algebras.
Our sufficient condition on $\A$ looks like a noetherian condition: $\A$ has no strictly ascending sequence (\cref{def:ACC_for_cat}), usually called the ascending chain condition (ACC).
We will show that if a locally presentable category $\A$ satisfies ACC, then filtered colimit elimination holds in Birkhoff's theorem for $\A$-relative algebras, which is our final main result.

\vspace{1em}
\paragraph{\textbf{Outline}}
In \cref{section:rat}, we introduce relative algebraic theories via partial Horn theories with several examples.
We also discuss morphisms of relative algebraic theories.

In \cref{section:birkhoff}, we introduce \emph{(dense, closed--mono)-factorization systems} on locally presentable categories.
Using such factorization systems, we show Birkhoff's variety theorem for partial Horn theories (\cref{thm:birkhoff_partialHorn}) and give Birkhoff's theorem for relative algebraic theories (\cref{thm:birkhoff_for_rat}) as its corollary.

In \cref{section:monad}, we discuss the relationship between our relative algebraic theories and accessible monads on locally presentable categories.
For a locally $\lambda$-presentable category $\A$, we obtain an equivalence between the category of $\A$-relative $\lambda$-ary algebraic theories and the categories of $\lambda$-ary monads on $\A$ (\cref{thm:equiv_between_monad_and_rat}).

\cref{section:elimination} is devoted to \emph{filtered colimit elimination}.
We first introduce pure quotients, which play an essential role in filtered colimit elimination.
We next introduce the ascending chain condition for a category, which will be a sufficient condition for eliminating closure under filtered colimits from Birkhoff's theorem relative to that category (\cref{thm:filcolim_elim}).

\vspace{1em}
\paragraph{\textbf{Summary}}
For convenience, we summarize the main contributions as follows:
\begin{itemize}
    \item
    Introducing \emph{relative algebraic theories} based on infinitary partial Horn theories, we syntactically describe accessible monads on locally presentable categories (\cref{thm:equiv_between_monad_and_rat}).
    \item
    We show Birkhoff's variety theorem for partial Horn theories (\cref{thm:birkhoff_partialHorn}), which can be applied to our relative algebras.
    \item
    We give a sufficient (and nearly necessary) condition for eliminating closure under filtered colimits from Birkhoff's variety theorem (\cref{thm:filcolim_elim}), which subsumes several cases: finite-sorted algebras, ordered algebras, metric algebras, etc.
\end{itemize}

\begin{remark}
    This paper is based on infinitary partial Horn logic, which is an infinitary extension of (finitary) partial Horn logic \cite{palmgren2007partial}.
    Such infinitary extension has already been considered by some authors \cite{parker2022covariant,tsukada2022linear}, but since there is no cohesive self-contained literature, so we summarize it in \cref{appendix:phl} for convenience.
\end{remark}

\begin{remark}
    Almost all of the content of this paper is based on the author's previous papers \cite{kawase2023birkhoffs,kawase2025filtered}.
    However, only the finitary case is considered in them.
    In this paper, we generalize them to the infinitary case.
\end{remark}

\section{Infinitary relative algebraic theories}\label{section:rat}
We now introduce \emph{relative algebraic theories} based on infinitary partial Horn theories.
Since partial Horn theories can express arbitrary locally presentable categories as their models, our relative algebraic theory is an algebraic concept relative to locally presentable categories.

\subsection{Relative algebras}
We fix an $S$-sorted $\lambda$-ary signature $\Sigma$ and a $\lambda$-ary partial Horn theory $\bS$ over $\Sigma$ throughout this subsection.

\begin{definition}\label{def:relative_alg_theory}\quad
    \begin{enumerate}
        \item
        An \emph{$\bS$-relative ($\lambda$-ary) signature} $\Omega$ is a set $\Omega$ such that for each element $\omega\in\Omega$, a Horn formula $\tup{x}.\phi$ over $\Sigma$ and a sort $s\in S$ are given.
        The Horn formula $\tup{x}.\phi$ is called an \emph{arity} of $\omega$ and written as $\ar(\omega)$.
        The sort $s$ is called a \emph{sort} of $\omega$ and written as $\outsort(\omega)$.
        \item\label{def:relative_alg_theory-2}
        Given an $\bS$-relative ($\lambda$-ary) signature $\Omega$,
        each $\omega\in\Omega$ can be regarded as a function symbol $\omega\colon \sqcap_{i<\alpha}s_i\to s$ if $\outsort(\omega)=s$ and $\ar(\omega)$ is in the context $(x_i\ofsort s_i)_{i<\alpha}$.
        Denote by $\Sigma+\Omega$ the $S$-sorted $\lambda$-ary signature obtained by adding to $\Sigma$ all $\omega\in\Omega$ in this way.
        A $\lambda$-ary Horn sequent $\phi\seq{\tup{x}}\psi$ over $\Sigma+\Omega$ is called an \emph{$\bS$-relative ($\lambda$-ary) judgment} if $\phi$ is over $\Sigma$, i.e., if $\phi$ contains no function symbol derived from $\Omega$.
        \item
        An \emph{$\bS$-relative ($\lambda$-ary) algebraic theory} is a pair $(\Omega,E)$ of an $\bS$-relative $\lambda$-ary signature $\Omega$ and a set $E$ of $\bS$-relative $\lambda$-ary judgments.\qedhere
    \end{enumerate}
\end{definition}

\begin{definition}
    Let $\Omega$ be an $\bS$-relative signature.
    An \emph{$\Omega$-algebra} $\bA$ consists of:
    \begin{itemize}
        \item
        a partial $\bS$-model $A$,
        \item
        for each $\omega\in\Omega$, a map $\intpn{\omega}{\bA}\colon \intpn{\ar(\omega)}{A}\to A_{\outsort(\omega)}$.\qedhere
    \end{itemize}
\end{definition}

An $\Omega$-algebra $\bA$ can be regarded as a partial $(\Sigma+\Omega)$-structure by considering $\intpn{\omega}{\bA}$ as a partial map $\prod_{i<\alpha} A_{s_i}\pto A_{\outsort(\omega)}$, where $\ar(\omega)$ is in the context $(x_i\ofsort s_i)_{i<\alpha}$.
Conversely, a partial $(\Sigma+\Omega)$-structure satisfying all sequents in $\bS$ and the bisequent $\omega(\tup{x})\defined\biseq{\tup{x}}\ar(\omega)$ for each $\omega\in\Omega$ can be regarded as an $\Omega$-algebra.

\begin{definition}
    Let $\Omega$ be an $\bS$-relative signature.
    We say an $\Omega$-algebra $\bA$ satisfies an $\bS$-relative judgment if it is valid in the partial $(\Sigma+\Omega)$-structure $\bA$.
\end{definition}

\begin{notation}\quad
    \begin{enumerate}
        \item
        Given an $\bS$-relative signature $\Omega$, we will denote by $\Alg\Omega$ the category of $\Omega$-algebras and $(\Sigma+\Omega)$-homomorphisms.
        \item
        Given an $\bS$-relative algebraic theory $(\Omega,E)$, we will denote by $\Alg(\Omega,E)$ the full subcategory of $\Alg\Omega$ consisting of all algebras satisfying all $\bS$-relative judgments in $E$.
        An $\Omega$-algebra belonging to $\Alg(\Omega,E)$ is called an \emph{$(\Omega,E)$-algebra}.\qedhere
    \end{enumerate}
\end{notation}

\begin{definition}\label{def:pht_for_rat}
    Let $(\Omega,E)$ be an $\bS$-relative algebraic theory.
    We define the $\lambda$-ary partial Horn theory $\pht{\Omega}{E}$ over $\Sigma+\Omega$ associated with $(\Omega,E)$ as follows:
    \begin{equation*}
        \pht{\Omega}{E}\coloneq\bS\cup\{\omega(\tup{x})\defined\biseq{\tup{x}}\ar(\omega)\}_{\omega\in\Omega}\cup E
    \end{equation*}
    Then, we have $\Alg(\Omega,E)\cong\PMod\pht{\Omega}{E}$.
    In particular, $\Alg(\Omega,E)$ is locally $\lambda$-presentable.
\end{definition}

\subsection{Examples of relative algebraic theories}
We present several examples of relative algebraic theories.
The examples introduced here are classified into the finitary ($\aleph_0$-ary) case and the $\aleph_1$-ary case.

\subsubsection{The finitary case}
\begin{example}[Small categories]\label{eg:rat_smallcat}
    The finitary partial Horn theory $\bS_\quiv$ for quivers is given by:
    \begin{gather*}
        S_\quiv\coloneq\{e,v\},\quad
        \Sigma_\quiv\coloneq\{ s, t\colon e\to v \},\quad
        \bS_\quiv\coloneq\{ \top\seq{f\ofsort e} s(f)\defined\wedge t(f)\defined \}.
    \end{gather*}
    We define an $\bS_\quiv$-relative finitary algebraic theory $(\Omega,E)$ as follows:
    \begin{center}
        $\Omega:$\qquad
        \begin{tabular}{ccc}
            \qquad\qquad & arity & sort \\
            \hline
            $\circ$ & $(g{,}f\ofsort e).s(g)= t(f)$ & $e$ \\
            $\id$ & $(x\ofsort v).\top$ & $e$
        \end{tabular}
    \end{center}
    \begin{equation*}
        E\coloneq\left\{
        \begin{gathered}
            \top\seq{x\ofsort v} s(\id(x))=x\wedge t(\id(x))=x,\\
             s(g)= t(f)\seq{g{,}f\ofsort e} s(g\circ f)= s(f)\wedge t(g\circ f)= t(g),\\
            \top\seq{f\ofsort e}f\circ\id( s(f))=f\wedge \id( t(f))\circ f=f,\\
             s(h)= t(g)\wedge s(g)= t(f)\seq{h{,}g{,}f\ofsort e}(h\circ g)\circ f=h\circ (g\circ f)
        \end{gathered}
        \right\}
    \end{equation*}
    Then, we have $\Alg(\Omega,E)\cong\Cat$.
\end{example}

\begin{example}[Monoid-graded rings]
    We first define the finitary partial Horn theory $\bS_\mgset$ for \emph{monoid-graded sets}.
    Let $S\coloneq\{m,s\}$, where $m$ represents ``a monoid'' and $s$ represents ``a base set.''
    The $S$-sorted finitary signature $\Sigma_\mgset$ consists of the following function symbols:
    \begin{gather*}
        d\colon s\to m,\quad
        \cdot\colon m\sqcap m\to m,\quad
        e\colon ()\to m.
    \end{gather*}
    The finitary partial Horn theory $\bS_\mgset$ consists of:
    \begin{gather*}
        \top\seq{x\ofsort s} d(x)\defined,\quad
        \top\seq{a{,}b\ofsort m} a\cdot b\defined,\quad
        \top\seq{} e\defined;\\
        \top\seq{a{,}b{,}c\ofsort m} (a\cdot b)\cdot c=a\cdot (b\cdot c),\quad
        \top\seq{a\ofsort m} e\cdot a=a\wedge a\cdot e=a.
    \end{gather*}
    A partial model for $\bS_\mgset$, called a \emph{monoid-graded set}, is simply a map from a set to a monoid.
    
    Now, we define an $\bS_\mgset$-relative finitary algebraic theory $(\Omega,E)$ by the following:
    \begin{center}
        $\Omega$:\qquad
        \begin{tabular}{ccc}
            & arity & sort \\
            \hline
            $1$ & $().\top$ & $s$\\
            $\otimes$ & $(x{,}y\ofsort s).\top$ & $s$\\
            $0$ & $a\ofsort m.\top$ & $s$\\
            $-$ & $x\ofsort s.\top$ & $s$\\
            $+$ & $(x{,}y\ofsort s).d(x)=d(y)$ & $s$
        \end{tabular}
    \end{center}
    \begin{equation*}
        E\coloneq\left\{
        \begin{gathered}
            \top\seq{}d(1)=e,\quad
            \top\seq{x{,}y\ofsort s}d(x\otimes y)=d(x)\cdot d(y);\\
            \top\seq{a\ofsort m}d(0(a))=a,\quad
            \top\seq{x\ofsort s}d(-x)=d(x);\\
            d(x)=d(y)\seq{x{,}y\ofsort s}d(x+y)=d(x);\\
            d(x)=d(y)\wedge d(y)=d(z)\seq{x{,}y{,}z\ofsort s}(x+y)+z=x+(y+z);\\
            d(x)=d(y)\seq{x{,}y\ofsort s} x+y=y+x;\\
            \top\seq{x\ofsort s}x+0(d(x))=x\wedge x+(-x)=0(d(x));\\
            \top\seq{x{,}y{,}z\ofsort s}(x\otimes y)\otimes z=x\otimes (y\otimes z),\quad
            \top\seq{x\ofsort s}1\otimes x=x \wedge x\otimes 1=x;\\
            d(x)=d(y)\seq{x{,}y{,}z\ofsort s}(x+y)\otimes z=(x\otimes z)+(y\otimes z) \wedge z\otimes (x+y)=(z\otimes x)+(z\otimes y)
        \end{gathered}
        \right\}
    \end{equation*}
    An $(\Omega,E)$-algebra is called a \emph{monoid-graded ring}.
\end{example}

\begin{example}[Uniquely difference-ordered semirings]
    Let $\bS_\pos$ be the partial Horn theory for posets as in \cref{eg:pht_for_posets}.
    We define an $\bS_\pos$-relative finitary algebraic theory $(\Omega,E)$ as follows:
    \begin{center}
        $\Omega$:\qquad
        \begin{tabular}{ccc}
            & arity & sort \\
            \hline
            $0$ & $().\top$ & $\sort$\\
            $1$ & $().\top$ & $\sort$\\
            $+$ & $(x,y).\top$ & $\sort$\\
            $\cdot$ & $(x,y).\top$ & $\sort$\\
            $\ominus$ & $(x,y).x\le y$ & $\sort$
        \end{tabular}
    \end{center}
    \begin{equation*}
        E\coloneq\left\{
        \begin{gathered}
            \top\seq{x,y,z}(x+y)+z=x+(y+z)\wedge (x\cdot y)\cdot z=x\cdot (y\cdot z);\\
            \top\seq{x,y}x+y=y+x;\\
            \top\seq{x}0+x=x\wedge 1\cdot x=x\wedge x\cdot 1=x\wedge 0\cdot x=0\wedge x\cdot 0=0;\\
            \top\seq{x,y,z}x\cdot (y+z)=x\cdot y+x\cdot z\wedge (x+y)\cdot z=x\cdot z+y\cdot z;\\
            x\le y\seq{x,y}x+(y\ominus x)=y;\\
            \top\seq{x,y}x\le x+y\wedge (x+y)\ominus x=y
        \end{gathered}
        \right\}
    \end{equation*}
    Then, an $(\Omega,E)$-algebra is precisely a \emph{uniquely difference-ordered semiring} in \cite{golan2003semiring}.
\end{example}

\begin{example}[Partial Boolean algebras]
    The partial Horn theory $\bS_\rsrel$ for reflexive symmetric relations is given by:
    \begin{gather*}
        S_\rsrel\coloneq\{ \sort \},\quad
        \Sigma_\rsrel\coloneq\{ \odot\colon \sort\sqcap\sort \},\quad
        \bS_\rsrel\coloneq\{ \top\seq{x} x\odot x,\quad x\odot y\seq{x,y}y\odot x \}.
    \end{gather*}
    We define an $\bS_\rsrel$-relative finitary algebraic theory $(\Omega,E)$ as follows:
    \begin{center}
        $\Omega$:\qquad
        \begin{tabular}{ccc}
            & arity & sort \\
            \hline
            $0$ & $().\top$ & $\sort$\\
            $1$ & $().\top$ & $\sort$\\
            $\neg$ & $x.\top$ & $\sort$\\
            $\vee$ & $(x,y).x\odot y$ & $\sort$\\
            $\wedge$ & $(x,y).x\odot y$ & $\sort$
        \end{tabular}
    \end{center}
    \begin{equation*}
        E\coloneq\left\{
        \begin{gathered}
            \top \seq{x} x\odot 0,~x\odot 1;\quad\quad
            x\odot y \seq{x,y} x\odot\neg y;\\
            x\odot y,~y\odot z,~z\odot x \seq{x,y,z} x\odot (y\vee z),~x\odot (y\wedge z);\\
            x\odot y,~y\odot z,~z\odot x \seq{x,y,z} (x\vee y)\vee z=x\vee (y\vee z),~(x\wedge y)\wedge z=x\wedge (y\wedge z);\\
            x\odot y \seq{x,y} x\vee y=y\vee x,~x\wedge y=y\wedge x;\\
            x\odot y \seq{x,y} (x\wedge y)\vee x=x,~x\wedge (y\vee x)=x;\\
            \top \seq{x} x\vee 0=x,~x\wedge 1=x,~x\vee\neg x=1,~x\wedge\neg x=0;\\
            x\odot y,~y\odot z,~z\odot x \seq{x,y,z} (x\wedge y)\vee z=(x\vee z)\wedge (x\vee z);\\
            x\odot y,~y\odot z,~z\odot x \seq{x,y,z} (x\vee y)\wedge z=(x\wedge z)\vee (y\wedge z)
        \end{gathered}
        \right\}
    \end{equation*}
    In the above, we use the symbol (,) instead of $\wedge$ to avoid confusion.
    An $(\Omega,E)$-algebra is a Boolean algebra-like algebra whose conjunction and disjunction are partial, which is called a \emph{partial Boolean algebra} in \cite{berg2012noncomm}.
    There, the reflexive symmetric relation $\odot$ is called \emph{commeasurability}.
\end{example}

\subsubsection{The infinitary case}
\begin{example}[$\omega$-cpos]
    Let $\bS_\pos$ be the partial Horn theory as in \cref{eg:pht_for_posets}.
    In what follows, we regard $\bS_\pos$ as an $\aleph_1$-ary partial Horn theory.
    We present an $\bS_\pos$-relative $\aleph_1$-ary algebraic theory $(\Omega,E)$ for \emph{$\omega$-cpos}.
    Let $\Omega\coloneq\{\sup\}$ with 
    \begin{equation*}
        \ar(\sup)\coloneq(x_n)_{n<\omega}.\bigwedge_{n<\omega}x_n\le x_{n+1},\quad \outsort(\sup)\coloneq\sort.
    \end{equation*}
    The set $E$ is defined by the following:
    \begin{equation*}
        E\coloneq\left\{
        \begin{gathered}
            \bigwedge_{n<\omega}x_n\le x_{n+1} \longseq{(x_n)_{n<\omega}} \bigwedge_{n<\omega} x_n\le \sup(\tup{x});\\
            \bigwedge_{n<\omega}x_n\le x_{n+1}\wedge\bigwedge_{n<\omega}x_n\le y \longseq{(x_n)_{n<\omega},y} \sup(\tup{x})\le y
        \end{gathered}
        \right\}
    \end{equation*}
    Then, an $(\Omega,E)$-algebra is precisely an \emph{$\omega$-cpo}, i.e., a poset where every $\omega$-chain has a supremum.
\end{example}

\begin{example}[Generalized complete metric spaces]
    Let $\bS_\met$ be the $\aleph_1$-ary partial Horn theory as in \cref{eg:pht_for_met}.
    We now present an $\bS_\met$-relative $\aleph_1$-ary algebraic theory $(\Omega,E)$.
    Let $\Omega\coloneq\{\mathrm{lim}^\mu\}_\mu$, where $\mu$ represents all weakly decreasing maps $\mu\colon\bN\to [0,\infty)$ converging to $0$.
    The arity of $\mathrm{lim}^\mu$ is
    \begin{equation*}
        (x_n)_{n<\omega}.\left(\bigwedge_{N\in\bN}\bigwedge_{m,n\ge N}R^{\mu (N)} (x_m,x_n)\right).
    \end{equation*}
    The set $E$ contains the following $\bS_\met$-relative judgment
    \begin{equation*}
        \bigwedge_{N\in\bN}\bigwedge_{m,n\ge N}R^{\mu (N)} (x_m,x_n) \longseq{\tup{x}} \bigwedge_{N\in\bN}R^{2\mu (N)} (x_N,\mathrm{lim}^\mu (\tup{x}))
    \end{equation*}
    for each $\mu$.
    We can consider the operator $\mathrm{lim}^\mu$ assigns to each Cauchy sequence having the modulus of convergence of type $\mu$, its limit.
    Thus, an $(\Omega,E)$-algebra is precisely a generalized metric space where every Cauchy sequence converges.
    This algebraic presentation of Cauchy completeness is also discussed in \cite[Example 4.8]{ford2021monads}.
\end{example}

\begin{example}[Banach spaces]
    Let $\bS_\pmet$ be the $\aleph_1$-ary partial Horn theory as in \cref{eg:pht_for_pmet}.
    We present an $\bS_\pmet$-relative $\aleph_1$-ary algebraic theory $(\Omega,E)$.
    The $\bS_\pmet$-relative $\aleph_1$-ary signature $\Omega$ consists of:
    \begin{itemize}
        \item
        for each pair of $\alpha\in\bC$ and $r,r'\in [0,\infty)$ such that $|\alpha|r\le r'$, a total operator $\alpha_r^{r'}$ with
        \begin{equation*}
            \ar(\alpha_r^{r'})\coloneq(x\ofsort r).\top,\quad \outsort(\alpha_r^{r'})\coloneq r';
        \end{equation*}
        \item
        for each pair of $r,r'\in [0,\infty)$, a total operator $\dot{+}$ with
        \begin{equation*}
            \ar(\dot{+})\coloneq(x\ofsort r,y\ofsort r').\top,\quad \outsort(\dot{+})\coloneq r+r';
        \end{equation*}
        \item
        for each pair of $r\in [0,\infty)$ and a weakly decreasing map $\mu\colon\bN\to [0,\infty)$ converging to $0$, a partial operator $\mathrm{lim}_r^\mu$ with
        \begin{equation*}
            \ar(\mathrm{lim}_r^\mu)\coloneq (x_k\ofsort r)_{k<\omega}.\left(\bigwedge_{N\in\bN}\bigwedge_{m,n\ge N}R_r^{\mu (N)} (x_m,x_n)\right),\quad
            \outsort(\mathrm{lim}_r^\mu)\coloneq r.
        \end{equation*}
    \end{itemize}
    The set $E$ consists of the following $\bS_\pmet$-relative $\aleph_1$-ary judgments:
    \begin{align}
        \label{eq:banach_1}
        \top &\seq{x\ofsort r} 1_r^{r'}(x)=\iota_r^{r'}(x);\\
        \top &\seq{x\ofsort r_0} \beta_{r_1}^{r_2}(\alpha_{r_0}^{r_1}(x))=(\beta\alpha)_{r_0}^{r_2}(x);\\
        \top &\longseq{x\ofsort r_0,y\ofsort s_0} \iota_{r_0}^{r_1}(x)\dot{+}\iota_{s_0}^{s_1}(y)=\iota_{r_0+s_0}^{r_1+s_1}(x\dot{+}y) \qquad (r_0\le r_1,s_0\le s_1);\\
        \top &\longseq{x{,}y{,}z\ofsort r}(x\dot{+}y)\dot{+}\iota_r^{2r}(z)=\iota_r^{2r}(x)\dot{+}(y\dot{+}z);\\
        \top &\seq{x{,}y\ofsort r}x\dot{+}y=y\dot{+}x;\\
        \top &\seq{x\ofsort r}\iota_0^r(\dot{0})\dot{+}x=\iota_r^{2r}(x);\\
        \top &\seq{x{,}y\ofsort r_0}\alpha_{2r_0}^{2r_1}(x\dot{+}y)=\alpha_{r_0}^{r_1}(x)\dot{+}\alpha_{r_0}^{r_1}(y) \qquad (|\alpha|r_0\le r_1);\\
        \label{eq:banach_2}
        \top &\seq{x\ofsort r_0}(\alpha+\beta)_{r_0}^{2r_1}(x)=\alpha_{r_0}^{r_1}(x)\dot{+}\beta_{r_0}^{r_1}(x) \qquad (|\alpha|r_0\le r_1, |\beta|r_0\le r_1);
    \end{align}
    \begin{equation}\label{eq:banach_3}
        \bigwedge_{N\in\bN}\bigwedge_{m,n\ge N}R_r^{\mu (N)} (x_m,x_n) \longseq{(x_k\ofsort r)_{k<\omega}} \bigwedge_{N\in\bN}R_r^{2\mu (N)} (x_N,\mathrm{lim}^\mu (\tup{x})).
    \end{equation}
    The axioms \cref{eq:banach_1} to \cref{eq:banach_2} make an $(\Omega,E)$-algebra to be a complex normed space, and the last axiom \cref{eq:banach_3} requires that every Cauchy sequence converges.
    Thus, there is an equivalence $\Alg (\Omega,E)\simeq\Ban$, where $\Ban$ is the category of (complex) Banach spaces and contractions.
    According to \cref{cor:equiv_between_monadicity_and_rat}, which will be proved later, it follows that $\Ban$ is $\aleph_1$-ary monadic over $\pMet$, the category of pointed metric spaces and contractions.
    A similar statement about monadicity appears in \cite[Theorem 3.2]{rosicky2022banach}.
\end{example}

\subsection{Morphisms between relative algebraic theories}
We now introduce morphisms of relative algebraic theories and define the category $\Thl^\bS$ of $\bS$-relative $\lambda$-ary algebraic theories, which will turn out to be equivalent to the category of $\lambda$-ary monads on $\PMod\bS$ later in \cref{section:monad}.
In this subsection, we fix an $S$-sorted $\lambda$-ary signature $\Sigma$ and a $\lambda$-ary partial Horn theory $\bS$ over $\Sigma$ again.

\begin{definition}\label{def:morphism_rat}
    Let $(\Omega,E)$ and $(\Omega',E')$ be $\bS$-relative $\lambda$-ary algebraic theories.
    \begin{enumerate}
        \item
        A \emph{($\lambda$-ary) theory morphism} $\rho\colon (\Omega,E)\to (\Omega',E')$ is an assignment to each operator $\omega\in\Omega$, a $\pht{\Omega'}{E'}$-term $\omega^\rho$ of sort $\outsort(\omega)$ generated by $\ar(\omega)$ satisfying that for every $(\phi\seq{\tup{x}}\psi)\in E$, $\pht{\Omega'}{E'}\vdash (\phi\seq{\tup{x}}\psi^\rho)$ holds.
        Here, $\psi^\rho$ is the $\rho$-translation, which is constructed by replacing all symbols that $\psi$ includes by $\rho$.
        \item
        Let $\rho,\sigma\colon (\Omega,E)\to (\Omega',E')$ be theory morphisms.
        We say that \emph{$\rho$ and $\sigma$ are equivalent} and write $\rho\sim\sigma$ if $\pht{\Omega'}{E'}\vdash (\phi\seq{\tup{x}}\omega^\rho=\omega^\sigma)$ holds for every $\omega\in\Omega$ with $\ar(\omega)=\tup{x}.\phi$.
        \item
        Let $\rho\colon (\Omega,E)\to (\Omega',E')$ be a theory morphism.
        Given an algebra $\bA\in\Alg(\Omega',E')$, an algebra $\bA^\rho\in\Alg(\Omega,E)$ is defined by $\intpn{\omega}{\bA^\rho}\coloneq\intpn{\omega^\rho}{\bA}$ for each $\omega\in\Omega$.
        Then, there exists a unique functor $\funcAlg\rho\colon \Alg(\Omega',E')\to\Alg(\Omega,E)$ such that $\bA\mapsto\bA^\rho$ and the following diagram
        \begin{equation*}
            \begin{tikzcd}
                \Alg(\Omega,E)\arrow[rd,"U"'] & & \Alg(\Omega',E')\arrow[ll,"\funcAlg\rho"']\arrow[ld,"U'"] \\
                & \PMod\bS &
            \end{tikzcd}
        \end{equation*}
        commutes.
        Here, $U$ and $U'$ are forgetful functors.\qedhere
    \end{enumerate}
\end{definition}

\begin{remark}
    A theory morphism between relative algebraic theories is a special case of theory morphisms between partial Horn theories defined in \cref{def:theory_morphism_pht}.
    Indeed, a theory morphism $\rho\colon (\Omega,E)\to (\Omega',E')$ is simply a theory morphism $\rho\colon \pht{\Omega}{E}\to\pht{\Omega'}{E'}$ which moves no sort and no symbol in $\Sigma$.
\end{remark}

\begin{remark}\label{rem:equivalence_of_theory_morphism}
    Let $\rho,\sigma\colon (\Omega,E)\to (\Omega',E')$ be theory morphisms.
    Then, $\funcAlg\rho=\funcAlg\sigma$ if and only if $\rho\sim\sigma$.
\end{remark}

\begin{definition}\quad
    \begin{enumerate}
        \item
        We now define the category $\Thl^\bS$ of $\bS$-relative $\lambda$-ary algebraic theories:
        $\Thl^\bS$ is the category whose objects are $\bS$-relative $\lambda$-ary algebraic theories and whose morphism from $(\Omega,E)$ to $(\Omega',E')$ is an equivalence class $[\rho]$ of a $\lambda$-ary theory morphism $\rho$.
        Given two morphisms $[\rho]\colon (\Omega,E)\to (\Omega',E')$ and $[\sigma]\colon (\Omega',E')\to (\Omega'',E'')$, their composite is an equivalence class of the theory morphism $\sigma\circ\rho$ which assigns to $\omega\in\Omega$, the $\pht{\Omega''}{E''}$-term $(\omega^\rho)^\sigma$.
        The identity morphism is an equivalence class of the identity theory morphism $\rho$ such that $\omega^\rho\coloneq\omega$.
        \item
        We can define the functor $\funcAlg\colon \Thl^\bS\to (\CAT/\PMod\bS)^\op$ by
        \[
            (\Omega,E)\mapsto\Alg(\Omega,E),\qquad
            [\rho]\mapsto\funcAlg\rho,
        \]
        where $\CAT$ is the category of (not necessarily small) categories and $\CAT/\PMod\bS$ is the slice category.
        This functor is well-defined by \cref{rem:equivalence_of_theory_morphism}.\qedhere
    \end{enumerate}
\end{definition}

\begin{theorem}\label{thm:alg_fully_faithful}
    The functor $\funcAlg\colon \Thl^\bS \to (\CAT/\PMod\bS)^\op$ is fully faithful.
\end{theorem}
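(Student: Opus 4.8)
The plan is to split the claim into faithfulness and fullness, with the former being immediate and the latter carrying all the content. Faithfulness is exactly \cref{rem:equivalence_of_theory_morphism}: two theory morphisms $\rho,\sigma\colon(\Omega,E)\to(\Omega',E')$ satisfy $\funcAlg\rho=\funcAlg\sigma$ if and only if $\rho\sim\sigma$, i.e.\ $[\rho]=[\sigma]$ in $\Thl^\bS$, so $[\rho]\mapsto\funcAlg\rho$ is injective on each hom-set. For fullness, fix a morphism of $(\CAT/\PMod\bS)^\op$, that is, a functor $G\colon\Alg(\Omega',E')\to\Alg(\Omega,E)$ with $U\circ G=U'$ for the forgetful functors $U,U'$; I must produce a theory morphism $\rho$ with $\funcAlg\rho=G$.

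The idea is to read off each $\omega^\rho$ from a generic element. Fix $\omega\in\Omega$ with $\ar(\omega)=\tup{x}.\phi$ and $s:=\type(\omega)$, and set $R:=\repn{\tup{x}.\phi}_{\pht{\Omega'}{E'}}\in\Alg(\Omega',E')$, using $\Alg(\Omega',E')\cong\PMod\pht{\Omega'}{E'}$ from \cref{def:pht_for_rat}. By \cref{lem:basic_property_repn_model} the tuple $\tup{g}:=([\tup{x}.x_i]_{\pht{\Omega'}{E'}})_i$ lies in $\intpn{\tup{x}.\phi}{R}$; since $\phi$ is over $\Sigma$ and $U(GR)=U'R$, the same $\tup{g}$ lies in the domain $\intpn{\ar(\omega)}{GR}$ of $\intpn{\omega}{GR}$. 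I then define $\omega^\rho$ to be a representative of
\[
    \intpn{\omega}{GR}(\tup{g})\in (GR)_s=R_s .
\]
Because the elements of $R_s$ are precisely the classes of $\pht{\Omega'}{E'}$-terms of type $s$ generated by $\tup{x}.\phi$, this $\omega^\rho$ is a $\pht{\Omega'}{E'}$-term of type $\type(\omega)$ generated by $\ar(\omega)$, as required by \cref{def:morphism_rat}.

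I would verify $\funcAlg\rho=G$ before checking that $\rho$ is a genuine theory morphism, to avoid circularity, noting that $\intpn{\omega}{\bA^\rho}:=\intpn{\omega^\rho}{\bA}$ already makes sense for the bare term assignment. Given $\bA\in\Alg(\Omega',E')$ and $\tup{a}\in\intpn{\ar(\omega)}{\bA}$, let $h\colon R\to\bA$ be the morphism corresponding to $\tup{a}$ under \cref{prop:repn_obj_represents_intpn}, so $h(\tup{g})=\tup{a}$ and $h([\tup{x}.\tau]_{\pht{\Omega'}{E'}})=\intpn{\tup{x}.\tau}{\bA}(\tup{a})$. Applying $G$ and using that $Gh$ is a $(\Sigma+\Omega)$-homomorphism (hence preserves $\omega$) together with $U(Gh)=U'h$, I obtain
\[
    \intpn{\omega^\rho}{\bA}(\tup{a})=(Gh)\bigl(\intpn{\omega}{GR}(\tup{g})\bigr)=\intpn{\omega}{G\bA}(\tup{a}).
\]
Thus $\bA^\rho$ and $G\bA$ share the underlying $\bS$-model $U'\bA$ and all interpretations of $\omega\in\Omega$, so $\bA^\rho=G\bA$ as $(\Sigma+\Omega)$-structures. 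Since $G$ lands in $\Alg(\Omega,E)$, every $\bA^\rho$ satisfies $E$; via the substitution identity $\intpn{\tup{x}.\psi}{\bA^\rho}=\intpn{\tup{x}.\psi^\rho}{\bA}$ this reads $\bA\vDash(\phi\seq{\tup{x}}\psi^\rho)$ for all $(\phi\seq{\tup{x}}\psi)\in E$ and all $\bA$, whence $\pht{\Omega'}{E'}\vdash(\phi\seq{\tup{x}}\psi^\rho)$ by completeness (\cref{thm:completeness_thm_for_PHL}). Hence $\rho$ is a bona fide theory morphism, $\funcAlg\rho$ is defined and agrees with $G$ on objects; since both functors commute with $U,U'$ and a morphism of algebras is determined by its underlying maps, they agree on morphisms too, giving $\funcAlg\rho=G$.

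The main obstacle is the fullness construction, specifically making the identification $(GR)_s=R_s$ and the homomorphism-preservation step fully precise, so that $\omega^\rho$ is well defined (up to $\approx_{\pht{\Omega'}{E'}}$, which is exactly the ambiguity quotiented out by $\sim$) and genuinely recovers $G$. Everything else is routine bookkeeping with the representing-model machinery of \cref{section:phl}.
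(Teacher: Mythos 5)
Your proposal is correct and follows essentially the same route as the paper's proof: faithfulness via \cref{rem:equivalence_of_theory_morphism}, and fullness by defining $\omega^\rho$ as (a representative of) $\intpn{\omega}{G\repn{\tup{x}.\phi}_{\pht{\Omega'}{E'}}}$ applied to the generic tuple $([\tup{x}.x_i])_i$, then propagating to all algebras via morphisms out of the representing model (\cref{prop:repn_obj_represents_intpn}) and deducing the judgment condition at the end. Your use of an explicit morphism $h\colon R\to\bA$ and the explicit appeal to completeness are just minor rephrasings of the paper's pairwise-commutative-diagram argument.
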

\begin{proof}
    By \cref{rem:equivalence_of_theory_morphism}, the functor $\funcAlg$ is faithful.
    To prove fullness, take an arbitrary functor $K\colon \Alg(\Omega',E')\to\Alg(\Omega,E)$ which commutes with forgetful functors.
    Let $\omega\in\Omega$ with $\ar(\omega)=(x_i\ofsort s_i)_{i<\alpha}.\phi$ and $\outsort(\omega)=s$ and let $\bA\coloneq\repn{\tup{x}.\phi}_\pht{\Omega'}{E'}\in\Alg(\Omega',E')$ and $A\coloneq U'\bA$.
    Considering the interpretation $\intpn{\omega}{K\bA}\colon \intpn{\tup{x}.\phi}{A}\to A_s$, we get an equivalence class $\intpn{\omega}{K\bA}([\tup{x}]_\pht{\Omega'}{E'})\in A_s$ of a $\pht{\Omega'}{E'}$-term of sort $s$ generated by $\tup{x}.\phi$ from the construction of $\repn{\tup{x}.\phi}_\pht{\Omega'}{E'}$ in \cref{def:representing_model}.
    We now define $\omega^\rho$ as a representative of the class $\intpn{\omega}{K\bA}([\tup{x}]_\pht{\Omega'}{E'})\in A_s$.

    We now show that the interpretation maps $\intpn{\omega}{K\bB},\intpn{\omega^\rho}{\bB}\colon\intpn{\tup{x}.\phi}{B}\to B_s$ coincide for every $\bB\in\Alg(\Omega',E')$.
    Write $B\coloneq U'\bB$.
    Then, for every morphism $f\colon \bA\to\bB$ in $\Alg(\Omega',E')$, the following diagram is pairwise commutative:
    \begin{equation}\label{eq:pairwise_commutative_AB}
        \begin{tikzcd}
            \prod_{i<\alpha}A_{s_i}\arrow[d,"\prod_{i<\alpha}(U'f)_{s_i}"']\arrow[r,phantom,"\supseteq"] &[-10pt] \intpn{\tup{x}.\phi}{A}\arrow[d,dashed,"\intpn{\tup{x}.\phi}{U'f}"']\arrow[r,shift left=1,"\intpn{\omega}{K\bA}"]\arrow[r,shift right=1,"\intpn{\omega^\rho}{\bA}"'] & A_s\arrow[d,"(U'f)_s"] \\
            \prod_{i<\alpha}B_{s_i}\arrow[r,phantom,"\supseteq"] &[-10pt] \intpn{\tup{x}.\phi}{B}\arrow[r,shift left=1,"\intpn{\omega}{K\bB}"]\arrow[r,shift right=1,"\intpn{\omega^\rho}{\bB}"'] & B_s
        \end{tikzcd}\incat{\Set}
    \end{equation}
    By the definition of $\omega^\rho$, we have $\intpn{\omega}{K\bA}=\intpn{\omega^\rho}{\bA}$.
    By $\Alg(\Omega',E')(\bA,\bB)\cong\intpn{\tup{x}.\phi}{B}$, we see that every element of $\intpn{\tup{x}.\phi}{B}$ lies in the image of $\intpn{\tup{x}.\phi}{U'f}$ for some $f$.
    Thus, \cref{eq:pairwise_commutative_AB} implies $\intpn{\omega}{K\bB}=\intpn{\omega^\rho}{\bB}$.

    Consequently, we have $\bB^\rho=K\bB\in\Alg(\Omega,E)$ for every $\bB\in\Alg(\Omega',E')$; hence for every $(\phi\seq{\tup{x}}\psi)\in E$, $\pht{\Omega'}{E'}\vdash (\phi\seq{\tup{x}}\psi^\rho)$ is satisfied.
    Now, this yields a theory morphism $\rho\colon (\Omega,E)\to (\Omega',E')$ such that $K=\funcAlg\rho$.
\end{proof}

\section{Birkhoff's variety theorem}\label{section:birkhoff}
Our goal is to prove Birkhoff's variety theorem for partial Horn theories (\cref{thm:birkhoff_partialHorn}), which can be applied to our relative algebraic theories.

\subsection{Presentable proper factorization systems}
We recall the definition of an orthogonal factorization system, which plays an important role in subsequent subsections.

\begin{definition} Let $\C$ be a category.
    \begin{enumerate}
        \item
        Given morphisms $e$ and $m$ in $\C$, we write $e\perp m$ if for any commutative square $ve=mu$ there exists a unique diagonal filler making both triangles commute:
        \begin{equation*}
            \begin{tikzcd}
                \cdot\arrow[r,"u"]\arrow[d,"e"'] & \cdot\arrow[d,"m"] \\
                \cdot\arrow[r,"v"']\arrow[ur,dashed,"\exists !"] & \cdot
            \end{tikzcd}
        \end{equation*}
        \item
        Given a class of morphisms $\Lambda$, denote by $\lorth(\Lambda)$ and $\rorth(\Lambda)$ the classes
        \begin{align*}
            \lorth(\Lambda)&\coloneq\{ e \mid e\perp m\text{ for all }m\in\Lambda\} \\
            \rorth(\Lambda)&\coloneq\{ m \mid e\perp m\text{ for all }e\in\Lambda\}.
        \end{align*}
        \item
        An \emph{orthogonal factorization system} on $\C$ is a pair $(\bfE,\bfM)$ of classes of morphisms in $\C$ that satisfies the following conditions:
        \begin{itemize}
            \item $\bfE$ and $\bfM$ are closed under composition and contain all isomorphisms in $\C$;
            \item Every morphism $f$ in $\C$ has a factorization $f=me$ with $e\in\bfE$ and $m\in\bfM$;
            \item $\bfE\perp\bfM$ holds, i.e., for any $e\in\bfE$ and $m\in\bfM$, $e\perp m$ holds.
        \end{itemize}
        Given an orthogonal factorization system $(\bfE,\bfM)$, we have $\bfE=\lorth(\bfM)$ and $\bfM=\rorth(\bfE)$.
        These can be verified straightforwardly.
        \item
        A \emph{proper factorization system} is an orthogonal factorization system $(\bfE,\bfM)$ such that every morphism in $\bfE$ is an epimorphism and every morphism in $\bfM$ is a monomorphism.\qedhere
    \end{enumerate}
\end{definition}

We now recall the orthogonal version of the small object argument on locally presentable categories.
\begin{theorem}\label{thm:OFS_in_LPcategory}
    Let $\Lambda$ be a set of morphisms in a locally presentable category $\A$.
    Then, $(\,\lorth\rorth(\Lambda),\rorth(\Lambda)\,)$ is an orthogonal factorization system on $\A$.
\end{theorem}
\begin{proof}
    See, for example, \cite[2.2 Theorem]{fajstruprosicky2008convenient}.
\end{proof}

\begin{notation}
    Given a class of morphisms $\Lambda$, denote by $\Lambda_\lambda$ the essentially small class
    \begin{equation*}
        \Lambda_\lambda\coloneq\{ f\in\Lambda \mid \dom f, \cod f\text{ are }\lambda\text{-presentable} \}.\qedhere
    \end{equation*}
\end{notation}

\begin{lemma}\label{lem:mono_rlp_wrt_retracts}
    Let $\A$ be a locally $\lambda$-presentable category.
    Denote by $\mathbf{Ret}$ the class of all retractions in $\A$ and by $\mathbf{Mono}$ the class of all monomorphisms in $\A$.
    Then, $\rorth(\mathbf{Ret}_\lambda)=\mathbf{Mono}$ holds.
\end{lemma}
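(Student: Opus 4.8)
The plan is to prove the two inclusions $\mathbf{Mono}\subseteq\rorth(\mathbf{Ret}_\lambda)$ and $\rorth(\mathbf{Ret}_\lambda)\subseteq\mathbf{Mono}$ separately. The first is a direct diagram chase that works against \emph{all} split epimorphisms, and the second exploits one well-chosen $\lambda$-presentable retraction, namely a codiagonal.

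For the inclusion $\mathbf{Mono}\subseteq\rorth(\mathbf{Ret}_\lambda)$ I would in fact establish the stronger statement $\mathbf{Mono}\subseteq\rorth(\mathbf{Ret})$; since $\mathbf{Ret}_\lambda\subseteq\mathbf{Ret}$ and $\rorth$ is antitone, this suffices. Let $m\colon C\to D$ be a monomorphism and $e\colon A\to B$ any retraction, with a section $s\colon B\to A$ so that $e\circ s=\id_B$. Given a commutative square $m\circ u=v\circ e$ with $u\colon A\to C$ and $v\colon B\to D$, I set $d:=u\circ s\colon B\to C$. Then $m\circ d=m\circ u\circ s=v\circ e\circ s=v$, giving the lower triangle, and $m\circ(d\circ e)=v\circ e\circ s\circ e=v\circ e=m\circ u$, so that $d\circ e=u$ because $m$ is monic, giving the upper triangle. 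Uniqueness of the filler is immediate, since any two fillers agree after postcomposition with the monomorphism $m$. Hence $e\perp m$.

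For the converse $\rorth(\mathbf{Ret}_\lambda)\subseteq\mathbf{Mono}$, let $m\colon C\to D$ lie in $\rorth(\mathbf{Ret}_\lambda)$. I would first reduce monicity to injectivity tested on $\lambda$-presentable objects: since $\A$ is locally $\lambda$-presentable, the $\lambda$-presentable objects are jointly generating, so it is enough to show that for every $\lambda$-presentable $X$ and every pair $a,b\colon X\to C$, the equation $m\circ a=m\circ b$ forces $a=b$. The key device is the codiagonal $\nabla:=[\id_X,\id_X]\colon X\sqcup X\to X$, which is a retraction (split by either coprojection); as $\lambda$-presentable objects are closed under finite coproducts, $X\sqcup X$ is $\lambda$-presentable, so $\nabla\in\mathbf{Ret}_\lambda$. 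Writing $f:=m\circ a=m\circ b$, the square
\[
\begin{tikzcd}[large]
X\sqcup X\arrow[r,"{[a,b]}"]\arrow[d,"\nabla"'] & C\arrow[d,"m"]\\
X\arrow[r,"f"'] & D
\end{tikzcd}
\]
commutes because $m\circ[a,b]=[m\circ a,m\circ b]=[f,f]=f\circ\nabla$. Orthogonality $\nabla\perp m$ then yields a diagonal $d\colon X\to C$ with $d\circ\nabla=[a,b]$; precomposing with the two coprojections gives $a=d=b$. Thus $m$ is injective on $\lambda$-presentables, hence a monomorphism.

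I expect no serious obstacle, as the argument rests on two standard facts about locally $\lambda$-presentable categories, both citable from \cite{adamek1994locally}: that $\lambda$-presentable objects are closed under $\lambda$-small (in particular finite) colimits, and that they jointly generate. The only point needing a little care is the reduction of ``monomorphism'' to ``injective on $\lambda$-presentable test objects'': for arbitrary $a,b\colon Y\to C$ one writes a colimit presentation $Y=\Colim{i}Y_i$ by $\lambda$-presentables, applies the established case to each $a\circ\kappa_i$ and $b\circ\kappa_i$, and concludes $a=b$ from the colimit cocone $(\kappa_i)_i$ being jointly epimorphic. The genuinely creative step is simply recognising the codiagonal as the right retraction to test against.
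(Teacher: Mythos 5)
Your proof is correct and follows essentially the same route as the paper's: the first inclusion is the same section-based diagram chase (which, as you note, needs no presentability at all), and the second inclusion uses exactly the paper's key device, testing orthogonality against the codiagonal $\nabla\colon A+A\to A$ on a $\lambda$-presentable object $A$ and invoking that $\lambda$-presentables form a generator. The only cosmetic difference is that you spell out the generator reduction explicitly, which the paper leaves as a one-line ``without loss of generality.''
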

\begin{proof}
    In general, a morphism $f$ is monic if and only if $\nabla_X \perp f$ holds for every object $X$, where $\nabla_X\colon X+X\to X$ denotes the codiagonal morphism.
    This equivalence still holds if we restrict the codiagonals to those of all $\lambda$-presentable objects, because the class of all $\lambda$-presentable objects forms a generator in any locally $\lambda$-presentable category.
    This shows $\rorth(\mathbf{Ret}_\lambda)\subseteq\mathbf{Mono}$, and the converse inclusion is immediate.
\end{proof}

\begin{definition}
    Let $\bfM$ be a class of monomorphisms.
    A morphism $f\colon X\to Y$ is \emph{$\bfM$-extremal} if $f$ factors through no proper $\bfM$-subobject of $Y$, i.e., 
    if $f$ has a factorization $f=mg$ with $m\in\bfM$, then $m$ is an isomorphism.
\end{definition}

\begin{lemma}\label{lem:M-extremal_M-strong}
    Let $\C$ be a category with pullbacks. Let $\bfM$ be a class of monomorphisms in $\C$ which is closed under pullbacks, i.e., 
    for every pullback square
    \begin{equation*}
        \begin{tikzcd}
            \cdot\arrow[r]\arrow[d,"m'"']\arrow[rd,pos=0.1,phantom,"\lrcorner"] & \cdot\arrow[d,"m"] \\
            \cdot\arrow[r] & \cdot
        \end{tikzcd}\incat{\C},
    \end{equation*}
    $m\in\bfM$ implies $m'\in\bfM$.
    Then, the class $\lorth(\bfM)$ coincides with the class of all $\bfM$-extremal morphisms in $\C$.
\end{lemma}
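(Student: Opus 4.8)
The plan is to prove the two inclusions separately. Throughout, the key elementary observation is that every member of $\bfM$ is a monomorphism, so any diagonal filler against an $\bfM$-morphism is automatically unique; hence in both directions the only real content is the \emph{existence} of fillers. The pullback-stability hypothesis on $\bfM$ enters only in the harder inclusion, namely that $\bfM$-extremal morphisms lie in $\lorth(\bfM)$.

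For the inclusion $\lorth(\bfM)\subseteq\{\bfM\text{-extremal morphisms}\}$, I would take $f\colon X\to Y$ in $\lorth(\bfM)$ together with a factorization $f=mg$, where $m\colon Z\to Y$ lies in $\bfM$ and $g\colon X\to Z$. Form the commutative square with left edge $f$, right edge $m$, top edge $g$, and bottom edge $\id_Y$; it commutes because both composites equal $f$. Since $f\perp m$, there is a diagonal $d\colon Y\to Z$ with $df=g$ and $md=\id_Y$. Thus $d$ is a section of $m$, so $m$ is a split epimorphism; being also a monomorphism, it is an isomorphism. Hence $f$ is $\bfM$-extremal.

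For the converse inclusion, I would take an $\bfM$-extremal morphism $f\colon X\to Y$, a morphism $m\colon A\to B$ in $\bfM$, and an arbitrary commutative square $mu=vf$ (with $u\colon X\to A$ and $v\colon Y\to B$), and produce the required unique filler. Uniqueness is immediate from $m$ being monic. For existence I would form the pullback $P$ of $v$ along $m$, with projections $m'\colon P\to Y$ and $\pi'\colon P\to A$ satisfying $vm'=m\pi'$. Closure of $\bfM$ under pullbacks gives $m'\in\bfM$. The equation $vf=mu$ then induces, by the universal property of $P$, a unique $g\colon X\to P$ with $m'g=f$ and $\pi'g=u$; so $f=m'g$ is a factorization of $f$ through the $\bfM$-morphism $m'$. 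Extremality of $f$ forces $m'$ to be an isomorphism, and then $d:=\pi'(m')^{-1}\colon Y\to A$ is the desired filler, since $md=v m'(m')^{-1}=v$ and $df=\pi'(m')^{-1}m'g=\pi'g=u$.

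The main (and essentially only) obstacle is setting up the reverse direction so that extremality can be applied: one must pull back $m$ precisely along $v$ so that the resulting projection $m'$ is both an $\bfM$-morphism and a factor of $f$. Once this pullback is in place and $\bfM$ is closed under it, everything else is formal diagram-chasing, and the monomorphism property of $\bfM$ handles uniqueness without further work.
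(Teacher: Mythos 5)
Your proof is correct: both inclusions are handled by exactly the standard argument (splitting $m$ via the lifting square with $\id_Y$ for one direction, and pulling $m$ back along $v$ to invoke extremality for the other), which is precisely what the paper has in mind when it omits the proof as ``straightforward.'' No gaps; the uniqueness-from-monicity observation and the use of pullback-stability are placed exactly where they are needed.
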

\begin{proof}
    The proof is straightforward.
\end{proof}

The following definition is due to \cite{hebert2004algebraically}.

\begin{definition}
    An orthogonal factorization system $(\bfE,\bfM)$ is \emph{$\lambda$-presentable} if $\rorth(\bfE_\lambda)\subseteq\bfM$.
\end{definition}

\begin{theorem}\label{thm:PPFS}
    Let $\A$ be a locally $\lambda$-presentable category.
    Let $\Lambda$ be a class of epimorphisms between $\lambda$-presentable objects in $\A$.
    Consider the following two classes of morphisms in $\A$.
    \begin{itemize}
        \item $\bfM$: the class of all monomorphisms in $\A$ belonging to $\rorth(\Lambda)$,
        \item $\bfE$: the class of all $\bfM$-extremal morphisms in $\A$.
    \end{itemize}
    Then $(\bfE,\bfM)$ is a $\lambda$-presentable proper factorization system.
    Conversely, every $\lambda$-presentable proper factorization system on $\A$ is constructed from some $\Lambda$ in this way.
\end{theorem}
\begin{proof}
    Define $\Lambda^*\coloneq\mathbf{Ret}_\lambda \cup\Lambda$.
    Since $\Lambda^*$ is essentially small, \cref{thm:OFS_in_LPcategory} shows that
    \[
    (\,\lorth\rorth(\Lambda^*),\rorth(\Lambda^*) \,)
    \]
    is a $\lambda$-presentable orthogonal factorization system on $\A$.
    \cref{lem:mono_rlp_wrt_retracts} implies $\rorth(\Lambda^*)=\bfM$, and \cref{lem:M-extremal_M-strong} implies $\lorth\rorth(\Lambda^*)=\bfE$.

    Since every morphism in $\Lambda$ is an epimorphism, the class $\bfM$ has the following property:
    \[
        gf\in\bfM~\implies~f\in\bfM,
    \]
    which is equivalent to that every morphism in $\bfE$ is an epimorphism \cite[{}14.11]{adamekherrlichstrecker2006joy}.

    Conversely, if we are given an arbitrary $\lambda$-presentable proper factorization system $(\bfE,\bfM)$ on $\A$, take $\Lambda\coloneq\bfE_\lambda$.
    Then, the construction above recovers $(\bfE,\bfM)$.
\end{proof}

The proof of the following is essentially the same as \cite[Theorem 2.10]{adamek2009orthogonal}.
\begin{theorem}\label{thm:co-intersection}
    Let $(\bfE,\bfM)$ be a $\lambda$-presentable proper factorization system on a locally $\lambda$-presentable category $\A$.
    Then every morphism $f\colon A\to X$ in $\bfE$ with $\lambda$-presentable domain $A$ is a $\lambda$-filtered colimit (in $A/\A$) of morphisms in $\bfE$ with $\lambda$-presentable codomain.
\end{theorem}
\begin{proof}
    Let $e_I\colon A\to Y_I\,(I\in\bI)$ be the family of all morphisms belonging to $\bfE$ through which $f$ factors and whose codomains are $\lambda$-presentable.
    Since $\bI$ yields a $\lambda$-filtered essentially small category, 
    there is a colimit $(e,Y)$ of $(e_I,Y_I)_{I\in\bI}$ in the coslice category $A/\A$.
    By the universality of the colimit $(e,Y)$, $f$ has a factorization $f=ge$ as follows:
    \begin{equation*}
        \begin{tikzcd}
            A\arrow[d,"e_I"']\arrow[rd,"e"description]\arrow[r,"f"] & X \\
            Y_I\arrow[r,"\kappa_I"'] & Y\arrow[u,dashed,"g"']
        \end{tikzcd}\incat{\A},
    \end{equation*}
    where $\kappa_I$ is the coprojection of the colimit.
    Since $\bfE$ coincides with the $\bfM$-extremals by \cref{thm:PPFS}, $f\in\bfE$ implies $g\in\bfE$.

    To prove that $g$ is an isomorphism, it suffices to show $g\in\bfM$.
    To show this, take the following commutative square arbitrarily:
    \begin{equation}\label{eq:comm_square_dg}
        \begin{tikzcd}
            B\arrow[d,"\bfE\ni\,d"']\arrow[r,"u"] & Y\arrow[d,"g"] \\
            C\arrow[r,"v"'] & X
        \end{tikzcd}\incat{\A}.
    \end{equation}
    We have to construct a unique diagonal filler for the above square \cref{eq:comm_square_dg}.
    Since $(\bfE,\bfM)$ is a $\lambda$-presentable proper factorization system, the uniqueness always holds and we can assume that $B$ and $C$ are $\lambda$-presentable.
    Now $u$ has a factorization $u=\kappa_I u'$ for some $I\in\bI$ because $B$ is $\lambda$-presentable.
    Take a pushout $Z$ of $d$ and $u'$, and consider the following canonical morphism $h\colon Z\to X$:
    \begin{equation*}
        \begin{tikzcd}
            B\arrow[rd,pos=0.95,bend right=20,phantom,"\ulcorner"]\arrow[rr,shift left=4,"u"]\arrow[dd,"d"']\arrow[r,"u'"'] & Y_I\arrow[r,"\kappa_I"']\arrow[d,"\rho"] & Y\arrow[dd,"g"] \\
            & Z\arrow[rd,dashed,"h"] & \\
            C\arrow[ru]\arrow[rr,"v"'] & & X
        \end{tikzcd}\incat{\A}.
    \end{equation*}
    Since $B$, $C$, and $Y_I$ are $\lambda$-presentable, $Z$ is also $\lambda$-presentable.
    Now $d\in\bfE$ implies $\rho\in\bfE$.
    Thus, 
    \[
        A\longarr[e_I]Y_I\longarr[\rho]Z
    \]
    is a morphism belonging to $\bfE$ through which $f$ factors.
    Therefore, there exists $J\in\bI$ satisfying $Z=Y_J$ and $\rho e_I=e_J$.
    Then the following diagram commutes:
    \begin{equation*}
        \begin{tikzcd}
            B\arrow[rd,pos=0.95,bend right=20,phantom,"\ulcorner"]\arrow[rr,shift left=4,"u"]\arrow[dd,"d"']\arrow[r,"u'"'] & Y_I\arrow[r,"\kappa_I"']\arrow[d,"\rho"] & Y \\
            & Y_J\arrow[ur,"\kappa_J"'] & \\
            C\arrow[ru] & &
        \end{tikzcd}\incat{\A}.
    \end{equation*}
    Since $d$ is an epimorphism, we have constructed a diagonal filler for the square \cref{eq:comm_square_dg}.
\end{proof}

\subsection{Closed monomorphisms}
In this subsection, we introduce a \emph{(dense, closed--mono)-factorization system} on $\PMod\bT$ for each partial Horn theory $\bT$.
It should be emphasized that the notion of (dense, closed--mono) strongly depends on the syntax $\bT$ (\cref{eg:PHT_for_monoids}).
Indeed, $\bT$-closedness may differ from $\bT'$-closedness even if $\PMod\bT\simeq\PMod\bT'$, and the same holds for density.
\pagebreak

\begin{remark}
    Let $\bT$ be a $\lambda$-ary partial Horn theory over an $S$-sorted $\lambda$-ary signature $\Sigma$.
    Then, for every morphism $h\colon A\to B$ in $\PMod\bT$, the following are equivalent:
    \begin{enumerate}
        \item
        $h$ is a monomorphism in $\PMod\bT$,
        \item
        $h_s\colon A_s\to B_s$ is injective for every sort $s\in S$.
    \end{enumerate}
    Thus, a subobject in $\PMod\bT$ is just a submodel.
\end{remark}

\begin{definition}
    Let $\bT$ be a $\lambda$-ary partial Horn theory over an $S$-sorted $\lambda$-ary signature $\Sigma$.
    \begin{enumerate}
        \item
        A monomorphism $A\hookrightarrow B$ in $\PMod\bT$ is called \emph{$\bT$-closed} (or \emph{$\Sigma$-closed}) if the following diagrams form pullback squares for any $f,R\in\Sigma$.
        \begin{equation*}
            \begin{tikzcd}
                \mathrm{Dom}(\intpn{f}{A})\arrow[d,hook']\arrow[r,hook]\arrow[rd,pos=0.1,phantom,"\lrcorner"] &[-10pt] \prod_{i<\alpha}A_{s_i}\arrow[d,hook'] \\
                \mathrm{Dom}(\intpn{f}{B})\arrow[r,hook] & \prod_{i<\alpha}B_{s_i}
            \end{tikzcd}
            \quad\quad
            \begin{tikzcd}
                \intpn{R}{A}\arrow[d,hook']\arrow[r,hook]\arrow[r,hook]\arrow[rd,pos=0.1,phantom,"\lrcorner"] &[-10pt] \prod_{i<\alpha}A_{s_i}\arrow[d,hook'] \\
                {\intpn{R}{B}} \arrow[r,hook] & \prod_{i<\alpha}B_{s_i}
            \end{tikzcd}
        \end{equation*}
        \item
        A morphism $h\colon A\to B$ in $\PMod\bT$ is called \emph{$\bT$-dense} (or \emph{$\Sigma$-dense}) if $h$ factors through no $\bT$-closed proper subobject of $B$.\qedhere
    \end{enumerate}
\end{definition}

$\bT$-closed monomorphisms are the so-called embeddings in model theory and play an important role in our generalized Birkhoff theorem.

\begin{remark}\label{rem:explaination_closedmono_dense}
    Let $\bT$ be a $\lambda$-ary partial Horn theory over an $S$-sorted $\lambda$-ary signature $\Sigma$.
    \begin{enumerate}
        \item
        $\bT$-closedness of a submodel $A\subseteq B$ in $\PMod\bT$ is equivalent to saying that given a family $\tup{a}$ of elements of $A$, if $\intpn{f}{B}(\tup{a})$ is defined for a function symbol $f$, then $\intpn{f}{A}(\tup{a})$ is also defined, and if $\tup{a}\in\intpn{R}{B}$ holds for a relation symbol $R$, then $\tup{a}\in\intpn{R}{A}$ also holds.
        \item
        Let $C\subseteq B$ be a subobject of $B\in\PMod\bT$ in $\Set^S$.
        Denote by $A$ the $S$-sorted set of all elements of $B$ which can be written as $\intpn{\tup{x}.\tau}{B}(\tup{c})$ by a family $\tup{c}$ of elements of $C$ and a term $\tau$ over $\Sigma$.
        Then the $\Sigma$-structure of $B$ induces a $\Sigma$-structure on $A$, which makes $A$ the smallest $\bT$-closed submodel of $B$ containing $C$.
        This submodel $A$ is called the \emph{$\bT$-closed submodel of $B$ generated by $C$}.
        \item\label{rem:explaination_closedmono_dense-3}
        A morphism $h\colon A\to B$ in $\PMod\bT$ is $\bT$-dense if and only if the $\bT$-closed submodel generated by the image of $h$ coincides with $B$.\qedhere
    \end{enumerate}
\end{remark}

\begin{example}\label{eg:PHT_for_monoids}\quad
    \begin{enumerate}
        \item
        Let us define an ordinary finitary partial Horn theory $\bT_\mon$ (over $\Sigma_\mon$) for monoids as follows:
        \begin{gather*}
            S\coloneq\{\sort\},\quad \Sigma_\mon\coloneq\{ e\colon ()\to \sort,\quad \cdot\colon \sort\sqcap\sort\to\sort \},
            \\
            \bT_\mon\coloneq\left\{
            \begin{gathered}
                \top\seq{}e\defined,\quad \top\seq{x,y}x\cdot y\defined,\\
                \top\seq{x,y,z}(x\cdot y)\cdot z=x\cdot(y\cdot z),\\
                \top\seq{x}x\cdot e=x \wedge e\cdot x=x
            \end{gathered}
            \right\}.
        \end{gather*}
        We have $\PMod\bT_\mon\cong\Mon$, where $\Mon$ is the category of monoids.
        Then a $\bT_\mon$-closed subobject is just a submonoid, and a $\bT_\mon$-dense morphism is just a surjective homomorphism.
        \item
        Let us define another finitary partial Horn theory $\bT'_\mon$ (over $\Sigma'_\mon$) for monoids as follows:
        \begin{gather*}
            S\coloneq\{\sort\},\quad \Sigma'_\mon\coloneq\Sigma_\mon+\{ \bullet^{-1}\colon \sort\to\sort \},
            \\
            \bT'_\mon\coloneq\bT_\mon+\left\{
            \begin{gathered}
                x^{-1}\defined\seq{x}x^{-1}\cdot x=e \wedge x\cdot x^{-1}=e,\\
                y\cdot x=e \wedge x\cdot y=e\seq{x,y}x^{-1}=y
            \end{gathered}
            \right\}.
        \end{gather*}
        A $\bT'_\mon$-model is just a monoid with the partial inverse function, and we also have $\PMod\bT'_\mon\cong\Mon$.
        Then a submonoid $\bN\subseteq\bZ$ is not $\bT'_\mon$-closed even though it is $\bT_\mon$-closed.
        Therefore closedness of monomorphisms depends on $\bT$.\qedhere
    \end{enumerate}
\end{example}

\begin{theorem}\label{thm:dense_closedmono_factorization}
    Let $\bT$ be a $\lambda$-ary partial Horn theory over an $S$-sorted $\lambda$-ary signature $\Sigma$.
    Then, the pair of the class of all $\bT$-dense morphisms and the class of all $\bT$-closed monomorphisms becomes a $\lambda$-presentable proper factorization system on $\PMod\bT$.
\end{theorem}
\begin{proof}
    Denote by $\Lambda$ the class of morphisms in $\PMod\bT$ consisting of the following:
    \begin{itemize}
        \item
        A morphism $\repn{\tup{x}.\top}_\bT\longarr[\repn{\tup{x}}_\bT]\repn{\tup{x}.f(\tup{x})\defined}_\bT$ for each function symbol $f\in\Sigma$,
        \item
        A morphism $\repn{\tup{x}.\top}_\bT\longarr[\repn{\tup{x}}_\bT]\repn{\tup{x}.R(\tup{x})}_\bT$ for each relation symbol $R\in\Sigma$.
    \end{itemize}
    Then $\bT$-closedness of a monomorphism $m$ is equivalent to whether $m$ belongs to $\rorth(\Lambda)$.
    Since $\Lambda$ is a small class of epimorphisms between $\lambda$-presentable objects, the statement follows from \cref{thm:PPFS}.
\end{proof}

\begin{proposition}\label{prop:closedmono_underlying}
    Let $\bS$ be a $\lambda$-ary partial Horn theory over an $S$-sorted $\lambda$-ary signature $\Sigma$.
    Let $(\Omega,E)$ be an $\bS$-relative $\lambda$-ary algebraic theory with the forgetful functor $U\colon\Alg(\Omega,E)\to\PMod\bS$.
    Then, the following hold:
    \begin{enumerate}
        \item\label{prop:closedmono_underlying-1}
            A morphism $f$ in $\Alg(\Omega,E)$ is $\pht{\Omega}{E}$-closed mono if and only if $Uf$ is $\bS$-closed mono.
        \item\label{prop:closedmono_underlying-2}
            A morphism $f$ in $\Alg(\Omega,E)$ is $\pht{\Omega}{E}$-dense if $Uf$ is $\bS$-dense.
    \end{enumerate}
    Here, $\pht{\Omega}{E}$ is the partial Horn theory as in \cref{def:pht_for_rat}.
\end{proposition}
\begin{proof}
    Since the domain of every partial operator in $\Omega$ is determined by a Horn formula over $\Sigma$, $\pht{\Omega}{E}$-closedness is equivalent to $\bS$-closedness, hence \cref{prop:closedmono_underlying-1} follows.
    To show \cref{prop:closedmono_underlying-2}, take a morphism $f$ in $\Alg(\Omega,E)$ such that $Uf$ is $\bS$-dense.
    By \cref{thm:dense_closedmono_factorization}, we can take a factorization $f=me$ such that $e$ is $\pht{\Omega}{E}$-dense and $m$ is $\pht{\Omega}{E}$-closed mono.
    By \cref{prop:closedmono_underlying-1}, $Um$ is $\bS$-closed mono but $Uf$ is $\bS$-dense, hence $Um$ becomes an isomorphism.
    One can prove that the forgetful functor $U$ is conservative by the definition of relative algebraic theories.
    Therefore, $m$ is an isomorphism, which proves \cref{prop:closedmono_underlying-2}.
\end{proof}

\begin{remark}\label{rem:dense_underlying}
    Unlike in the case of closed monomorphisms, the converse of \cref{prop:closedmono_underlying}\cref{prop:closedmono_underlying-2} fails.
    To present a counterexample, let us consider the $\bS_\quiv$-relative finitary algebraic theory $(\Omega,E)$ of \cref{eg:rat_smallcat}.
    Let $\Sigma\bN$ be the category obtained by regarding the additive monoid $\bN$ as a single object category.
    Let $\2\coloneq\{0<1\}$ and let $q\colon\2\to\Sigma\bN$ be the functor choosing an endomorphism $1$ in $\Sigma\bN$.
    Then, $q$ is dense as a morphism of small categories but not as a morphism of quivers.
    More seriously, $q$ is not even epi as a morphism of quivers.
\end{remark}

\begin{lemma}\label{lem:dense_between_presn}
    Let $\bT$ be a $\lambda$-ary partial Horn theory over an $S$-sorted $\lambda$-ary signature $\Sigma$.
    Then, every $\bT$-dense morphism between $\lambda$-presentable objects has the following expression:
    \begin{equation*}
        \repn{\tup{x}.\phi}_\bT \longarr[\repn{\tup{x}}_\bT] \repn{\tup{x}.\psi}_\bT
    \end{equation*}
\end{lemma}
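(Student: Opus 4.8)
The plan is to put both the source and the target of the $\bT$-dense morphism $h$ into their canonical presentations as representing models, and then use $\bT$-density to re-present the target on the \emph{same} context as the source, so that $h$ is identified with a canonical comparison morphism of the form $\repn{\tup{x}}_\bT$.

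First I would invoke \cref{thm:repn_enumerates_presn} to write the source as $\repn{\tup{x}.\phi}_\bT$ with $\tup{x}=(x_i\oftype s_i)_{i<\alpha}$ and the target as $\repn{\tup{z}.\chi}_\bT$ with $\tup{z}=(z_j)_{j<\beta}$. By \cref{prop:repn_obj_represents_intpn} the morphism $h$ corresponds to a tuple $\tup{a}\in\intpn{\tup{x}.\phi}{\repn{\tup{z}.\chi}_\bT}$, whose components are classes $a_i=[\tup{z}.\tau_i]_\bT$ of $\bT$-terms. Next I would extract the content of $\bT$-density: by \cref{rem:explaination_closedmono_dense}, the $\bT$-closed submodel of the target generated by the image of $h$ is the set of all $\intpn{\tup{x}.t}{\repn{\tup{z}.\chi}_\bT}(\tup{a})$ for terms $t$ in context $\tup{x}$, and $\bT$-density forces this to be all of $\repn{\tup{z}.\chi}_\bT$. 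In particular each generator $[\tup{z}.z_j]_\bT$ can be written as $\intpn{\tup{x}.\rho_j}{\repn{\tup{z}.\chi}_\bT}(\tup{a})$ for some term $\rho_j$ in context $\tup{x}$ of the sort of $z_j$.

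I would then propose the Horn formula
\[
    \tup{x}.\psi := \tup{x}.\Bigl(\phi\wedge\chi(\tup{\rho}/\tup{z})\wedge\bigwedge_{i<\alpha}x_i=\tau_i(\tup{\rho}/\tup{z})\Bigr),
\]
which is $\lambda$-ary since $\lambda$ is regular and $\alpha,\beta<\lambda$, and which satisfies $\bT\vdash(\psi\seq{\tup{x}}\phi)$ by \ref{phl:elim_conjunction}, so that the comparison map $\repn{\tup{x}}_\bT\colon\repn{\tup{x}.\phi}_\bT\to\repn{\tup{x}.\psi}_\bT$ exists. A short computation using the defining property of the $\rho_j$ and \cref{lem:basic_property_repn_model} shows $\tup{a}\in\intpn{\tup{x}.\psi}{\repn{\tup{z}.\chi}_\bT}$, so \cref{prop:repn_obj_represents_intpn} yields a morphism $\bar{h}\colon\repn{\tup{x}.\psi}_\bT\to\repn{\tup{z}.\chi}_\bT$ sending the generic tuple $([\tup{x}.x_i]_\bT)_{i<\alpha}$ to $\tup{a}$; conversely, since $\bT\vdash(\psi\seq{\tup{x}}\chi(\tup{\rho}/\tup{z}))$, the terms $\tup{\rho}$ define a morphism $k\colon\repn{\tup{z}.\chi}_\bT\to\repn{\tup{x}.\psi}_\bT$ by \cref{cor:morphism_between_repn}. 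I would check $\bar{h}k=\id$ on the generators $[\tup{z}.z_j]_\bT$ (using $\intpn{\tup{x}.\rho_j}{\repn{\tup{z}.\chi}_\bT}(\tup{a})=[\tup{z}.z_j]_\bT$) and $k\bar{h}=\id$ on the generic tuple, where the crucial point is that the conjunct $x_i=\tau_i(\tup{\rho}/\tup{z})$ of $\psi$ gives exactly $\bT\vdash(\psi\seq{\tup{x}}x_i=\tau_i(\tup{\rho}/\tup{z}))$; hence $\bar{h}$ is an isomorphism. Finally, both $h$ and $\bar{h}\circ\repn{\tup{x}}_\bT$ send the generic tuple of $\phi$ to $\tup{a}$, hence agree by \cref{prop:repn_obj_represents_intpn}, which identifies $h$ with $\repn{\tup{x}}_\bT$.

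The main obstacle is choosing $\psi$ correctly. $\bT$-density alone (via the terms $\rho_j$) only yields that $\bar{h}$ is a split epimorphism with section $k$, i.e.\ $\bar{h}k=\id$, which is insufficient to conclude that $\bar{h}$ is invertible. The subtlety is that one must build the generating equations $x_i=\tau_i(\tup{\rho}/\tup{z})$ into $\psi$ as explicit conjuncts in order to force $k\bar{h}=\id$, while simultaneously verifying that these equations are genuinely valid at $\tup{a}$, so that $\tup{a}$ still lies in $\intpn{\tup{x}.\psi}{\repn{\tup{z}.\chi}_\bT}$ and $\bar{h}$ remains well defined.
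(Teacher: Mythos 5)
Your strategy is essentially the paper's: present the dense morphism by terms $\tup{\tau}$ via \cref{prop:repn_obj_represents_intpn}, use density to extract terms $\rho_j$ with $\bT\vdash(\chi\seq{\tup{z}}z_j=\rho_j(\tup{\tau}/\tup{x}))$, and re-present the target by a formula in the context $\tup{x}$ built from $\chi(\tup{\rho}/\tup{z})$ and the equations $x_i=\tau_i(\tup{\rho}/\tup{z})$ (including $\phi$ as a conjunct, instead of deriving it, is a harmless simplification). However, there is a genuine gap: your $\psi$ omits the definedness conjuncts $\bigwedge_{j<\beta}\rho_j\defined$, which the paper's formula includes, and without them your morphism $k\colon\repn{\tup{z}.\chi}_\bT\to\repn{\tup{x}.\psi}_\bT$ need not exist. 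By \cref{cor:morphism_between_repn}, $k$ requires each $\rho_j$ to be a $\bT$-term generated by $\tup{x}.\psi$, i.e.\ $\bT\vdash(\psi\seq{\tup{x}}\rho_j\defined)$. In partial Horn logic a substituted formula does not entail definedness of the substituted terms: $\chi(\tup{\rho}/\tup{z})$ forces $\rho_j\defined$ only for those $z_j$ occurring in an atomic subformula of $\chi$, and $x_i=\tau_i(\tup{\rho}/\tup{z})$ only for those $z_j$ occurring in some $\tau_i$. A variable $z_j$ occurring in neither place may be reachable only through a genuinely partial operation, and then your $\psi$ is too weak.

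Concretely, take sorts $\{A,B\}$, a single partial constant $c\colon()\to B$, and $\bT:=\{\,\top\seq{y\oftype B}y=c\,\}$. Let the target be $\repn{(z_1\oftype A,z_2\oftype B).\top}_\bT$ (one element in each sort, with $[z_2]=[c]$), the source $\repn{x\oftype A.\top}_\bT$ (one element of sort $A$, empty sort $B$, $c$ undefined), and $h$ the morphism given by $\tau_1:=z_1$. Then $h$ is $\bT$-dense (every $\bT$-closed submodel of the target must contain $[c]$) but not surjective, and density leaves no choice: $\rho_1=x$ and $\rho_2=c$. Your formula becomes $\psi=(\top\wedge\top\wedge x=x)$, so $\repn{\tup{x}.\psi}_\bT\cong\repn{x.\top}_\bT$ and your $\bar{h}$ is just $h$ again, which is not an isomorphism; correspondingly $k$ does not exist, because $\bT\not\vdash(\psi\seq{x}c\defined)$ (consider the model with $M_A$ a singleton and $M_B=\varnothing$). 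The paper's formula adds the conjuncts $\bigwedge_{j<\beta}\rho_j\defined$ (here: $c\defined$), giving $\repn{x.c\defined}_\bT$, which is isomorphic to the target as required. So the repair is exactly to build these definedness conjuncts into $\psi$: in partial Horn logic they are just as crucial as the equations $x_i=\tau_i(\tup{\rho}/\tup{z})$ that you singled out.
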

\begin{proof}
    By \cref{cor:morphism_between_repn}, a morphism between $\lambda$-presentable objects has the following expression:
    \begin{equation}\label{eq:dense_between_presn_tau}
        \repn{\tup{x}.\phi}_\bT \longarr[\repn{\tup{\tau}}_\bT] \repn{\tup{y}.\psi}_\bT,
    \end{equation}
    where $\tup{x}.\phi$ and $\tup{y}.\psi$ are Horn formulas over $\Sigma$ with $\tup{x}=(x_i)_{i<\alpha}$ and $\tup{y}=(y_j)_{j<\beta}$.
    Suppose the morphism \cref{eq:dense_between_presn_tau} is $\bT$-dense.
    By \cref{rem:explaination_closedmono_dense}\cref{rem:explaination_closedmono_dense-3},
    for each $j<\beta$, we can take a term $\tup{x}.\sigma_j$ satisfying $[y_j]_\bT=[\sigma_j(\tup{\tau}/\tup{x})]_\bT$ in $\repn{\tup{y}.\psi}_\bT$, 
    i.e., the following is a $\PHL_\lambda$-theorem of $\bT$:
    \begin{equation*}
        \psi \seq{\tup{y}} y_j=\sigma_j(\tup{\tau}/\tup{x}).
    \end{equation*}
    Denote by $\tup{x}.\chi$ the following Horn formula:
    \begin{equation*}
        \tup{x}.\chi\coloneq\quad \tup{x}.\left( \psi(\tup{\sigma}/\tup{y})\wedge\bigwedge_{i<\alpha}x_i=\tau_i(\tup{\sigma}/\tup{y})\wedge\bigwedge_{j<\beta}\sigma_j\defined \right).
    \end{equation*}

    By virtue of the completeness theorem (\cref{thm:completeness_thm_for_PHL}), it is easy to check that the following are $\PHL_\lambda$-theorems of $\bT$:
    \begin{gather}
        \psi \seq{\tup{y}} \bigwedge_{i<\alpha}\tau_i\defined,
        \quad\quad
        \psi \seq{\tup{y}} \chi(\tup{\tau}/\tup{x});
        \label{eq:welldefinedness_tau}
        \\
        \chi \seq{\tup{x}} \bigwedge_{j<\beta}\sigma_j\defined,
        \quad\quad
        \chi \seq{\tup{x}} \psi(\tup{\sigma}/\tup{y});
        \label{eq:welldefinedness_sigma}
        \\
        \psi\seq{\tup{y}}\bigwedge_{j<\beta}y_j=\sigma_j(\tup{\tau}/\tup{x}),
        \quad\quad
        \chi\seq{\tup{x}}\bigwedge_{i<\alpha}x_i=\tau_i(\tup{\sigma}/\tup{y});
        \label{eq:isomorphism_sigma_tau}
        \\
        \chi\seq{\tup{x}}\phi.
        \label{eq:phi_implies_chi}
    \end{gather}
    \cref{eq:welldefinedness_sigma} and \cref{eq:welldefinedness_tau} being $\PHL_\lambda$-theorems of $\bT$ implies the well-definedness of the following morphisms:
    \begin{equation}\label{eq:mor_sigma_tau}
        \begin{tikzcd}
            \repn{\tup{x}.\chi}_\bT & \repn{\tup{y}.\psi}_\bT
            \arrow[from=1-2,to=1-1,shift left=2,"\repn{\tup{\sigma}}_\bT"]
            \arrow[from=1-1,to=1-2,shift left=2,"\repn{\tup{\tau}}_\bT"]
        \end{tikzcd}
    \end{equation}
    Since \cref{eq:isomorphism_sigma_tau} are $\PHL_\lambda$-theorems of $\bT$, two morphisms in \cref{eq:mor_sigma_tau} are inverses of each other.
    \cref{eq:phi_implies_chi} yields a morphism $\repn{\tup{x}.\phi}_\bT\longarr[\repn{\tup{x}}_\bT]\repn{\tup{x}.\chi}_\bT$, and we have the following commutative diagram:
    \begin{equation*}
        \begin{tikzcd}
            &[-20pt] \repn{\tup{x}.\phi}_\bT &[-20pt] \\
            \repn{\tup{x}.\chi}_\bT & & \repn{\tup{y}.\psi}_\bT
            \arrow[from=1-2,to=2-1,"\repn{\tup{x}}_\bT"']
            \arrow[from=1-2,to=2-3,"\repn{\tup{\tau}}_\bT"]
            \arrow[from=2-1,to=2-3,"\repn{\tup{\tau}}_\bT"',"\cong"]
        \end{tikzcd}
    \end{equation*}
    This completes the proof.
\end{proof}

\subsection{Birkhoff's variety theorem for partial Horn theories}
We now generalize Birkhoff's variety theorem to partial Horn theories.

\begin{definition}
    A full subcategory $\E\subseteq\C$ is \emph{replete} if $X\in\E$ whenever $X\cong Y$ in $\C$ and $Y\in\E$. 
\end{definition}

\begin{proposition}\label{prop:E-reflective}
    Let $\A$ be a locally presentable category with a proper factorization system $(\bfE,\bfM)$.
    For any replete full subcategory $\E\subseteq\A$, the following are equivalent:
    \begin{enumerate}
        \item
        $\E\subseteq\A$ is $\bfE$-reflective.
        \item
        $\E\subseteq\A$ is closed under products and $\bfM$-subobjects.
    \end{enumerate}
\end{proposition}
\begin{proof}
    Since every locally presentable category is co-wellpowered (see \cite[1.58 Theorem]{adamek1994locally}), this follows from \cite[{}16.8]{adamekherrlichstrecker2006joy}.
\end{proof}

We omit the proof of the following well-known fact:
\begin{lemma}\label{lem:orth_closed_under_filcolim}
    Let $\C$ be a category with $\lambda$-filtered colimits and let $\Lambda\subseteq\mor\C$ be a class of epimorphisms with $\lambda$-presentable domain.
    Then, the orthogonality class $\orth{\Lambda}\subseteq\C$ is closed under $\lambda$-filtered colimits.
\end{lemma}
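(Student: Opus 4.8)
The plan is to verify the two halves of the orthogonality condition separately, observing that the two hypotheses on $\Lambda$---that each morphism has $\lambda$-presentable domain and that each morphism is an epimorphism---are exactly what is needed for the existence and uniqueness parts respectively. Fix a morphism $(f\colon X\to Y)\in\Lambda$ and a $\lambda$-filtered diagram $(C_I)_{I\in\bI}$ in $\orth{\Lambda}$ with colimit $C=\Colim{I\in\bI}C_I$ and coprojections $\kappa_I\colon C_I\to C$. I must show that every $g\colon X\to C$ admits a unique $\hat{g}\colon Y\to C$ with $\hat{g}\circ f=g$.

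For uniqueness I would simply note that if $h_1,h_2\colon Y\to C$ both satisfy $h_i\circ f=g$, then $h_1\circ f=h_2\circ f$, whence $h_1=h_2$ because $f$ is an epimorphism. Thus uniqueness holds against \emph{any} object whatsoever, and uses neither the colimit nor the orthogonality of the $C_I$.

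For existence I would use that $X$ is $\lambda$-presentable: since $C$ is a $\lambda$-filtered colimit, the canonical comparison $\Colim{I\in\bI}\C(X,C_I)\to\C(X,C)$ is a bijection, so any $g\colon X\to C$ factors as $g=\kappa_I\circ g'$ for some $I\in\bI$ and some $g'\colon X\to C_I$. Since $C_I$ is orthogonal to $f$, there is a morphism $\widehat{g'}\colon Y\to C_I$ with $\widehat{g'}\circ f=g'$; then $\hat{g}:=\kappa_I\circ\widehat{g'}$ satisfies $\hat{g}\circ f=\kappa_I\circ g'=g$, as required. (Only the existence half of the orthogonality of each $C_I$ is actually invoked here.)

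Since both requirements are established for an arbitrary $f\in\Lambda$, the colimit $C$ lies in $\orth{\Lambda}$. There is no serious obstacle in this argument; the only point worth flagging is that one might initially expect uniqueness to require $\lambda$-presentability of the codomain $Y$ as well, whereas in fact it is the epimorphism hypothesis that supplies it---this is precisely why $\Lambda$ is assumed to consist of epimorphisms rather than arbitrary morphisms.
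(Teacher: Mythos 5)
Your proof is correct and follows essentially the same route as the paper's: factor $g$ through some $C_I$ using $\lambda$-presentability of the domain, lift along $f$ using orthogonality of $C_I$, and obtain uniqueness of the filler from $f$ being an epimorphism. The only difference is presentational—you separate the existence and uniqueness halves explicitly, while the paper compresses them into one step—so there is nothing to add.
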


\begin{definition}
    Let $\rho\colon (S,\Sigma,\bS)\to (S',\Sigma',\bT)$ be a $\lambda$-ary theory morphism between $\lambda$-ary partial Horn theories.
    A \emph{$\rho$-relative ($\lambda$-ary) judgment} is a $\lambda$-ary Horn sequent $\phi^\rho\seq{\tup{x}^\rho}\psi$, where $\tup{x}.\phi$ is a Horn formula over $\Sigma$ and $\tup{x}^\rho.\psi$ is a Horn formula over $\Sigma'$.
\end{definition}

\begin{remark}
    $\rho$-relative judgments generalize the concept of $\bS$-relative judgments as in \cref{def:relative_alg_theory}\cref{def:relative_alg_theory-2}.
    Indeed, for an $\bS$-relative algebraic theory $(\Omega,E)$, we can define the associated theory morphism $\rho\colon\bS\to\pht{\Omega}{E}$ as the inclusion.
    Then, an $\bS$-relative judgment is the same as a $\rho$-relative judgment.
\end{remark}

\begin{definition}
    Let $U\colon\C\to\A$ be a functor.
    A morphism $f$ in $\C$ is a \emph{$U$-retraction} if $Uf$ is a retraction, i.e., there exists a morphism $s$ in $\A$ such that $(Uf)\circ s=\id$.
    Given a $U$-retraction $f\colon X\to Y$, $Y$ is called a \emph{$U$-retract} of $X$.
\end{definition}

We can now formulate our main result.
To the author's knowledge, the following theorem is a new result and not known in any kind of logic equivalent to partial Horn logic because the theorem strongly depends on the syntax of partial Horn logic, as mentioned in \cref{eg:birkhoff_thm_depends_on_syntax} later.

\begin{theorem}[Birkhoff-type theorem for partial Horn theories I]\label{thm:birkhoff_partialHorn}
    Let $\rho\colon\bS\to\bT$ be a theory morphism between $\lambda$-ary partial Horn theories.
    Then, for every replete full subcategory $\E\subseteq\PMod\bT$, the following are equivalent:
    \begin{enumerate}
        \item\label{thm:birkhoff_partialHorn-1}
        $\E$ is definable by $\rho$-relative $\lambda$-ary judgments, i.e.,
        there exists a set $\bT'$ of $\rho$-relative $\lambda$-ary judgments satisfying $\E=\PMod (\bT+\bT')$.
        \item\label{thm:birkhoff_partialHorn-2}
        $\E$ is a small-orthogonality class with respect to a family of $\bT$-dense morphisms
        \[
            \Lambda=\{F^\rho (\Gamma_i)\longarr[e_i]\Delta_i\}_{i\in I}
        \]
        such that all $\Gamma_i$ are $\lambda$-presentable in $\PMod\bS$.
        \item\label{thm:birkhoff_partialHorn-3}
        $\E\subseteq\PMod\bT$ is closed under products, $\bT$-closed subobjects, $U^\rho$-retracts, and $\lambda$-filtered colimits.
    \end{enumerate}
\end{theorem}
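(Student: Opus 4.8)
The plan is to prove the cycle (i)$\Rightarrow$(ii)$\Rightarrow$(iii) together with (iii)$\Rightarrow$(ii)$\Rightarrow$(i), organized around the (dense, closed--mono)-factorization system $(\bfE,\bfM)$ of \cref{thm:dense_closedmono_factorization}. Throughout I would use the adjunction $F^\rho\dashv U^\rho$ of \cref{thm:adjunction_induced_by_theory_mor}, the identity $F^\rho\repn{\tup{x}.\phi}_\bS=\repn{\tup{x}^\rho.\phi^\rho}_\bT$ from its proof, and the fact that $U^\rho$ preserves $\lambda$-filtered colimits (these are computed sort-wise via formula interpretations, cf.\ \cref{rem:filtered_colim_partial_model}), so that $F^\rho$ preserves $\lambda$-presentable objects.

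The equivalence (i)$\Leftrightarrow$(ii) is the light part. For (i)$\Rightarrow$(ii), \cref{prop:validity_for_PHL} says a $\rho$-relative judgment $\phi^\rho\seq{\tup{x}^\rho}\psi$ is valid in $M$ exactly when $M$ is orthogonal to the $\bT$-dense morphism $\repn{\tup{x}^\rho.\phi^\rho}_\bT\to\repn{\tup{x}^\rho.\phi^\rho\wedge\psi}_\bT$, whose domain is $F^\rho(\Gamma)$ with $\Gamma=\repn{\tup{x}.\phi}_\bS$ $\lambda$-presentable in $\PMod\bS$ by \cref{thm:repn_enumerates_presn}; the resulting small family $\Lambda$ witnesses $\E=\orth{\Lambda}$. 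For (ii)$\Rightarrow$(i), I would apply \cref{thm:co-intersection} to each $e_i\colon F^\rho(\Gamma_i)\to\Delta_i$ (a $\bfE$-morphism with $\lambda$-presentable domain) to write it as a $\lambda$-filtered colimit, in the coslice, of $\bfE$-morphisms $e_{i,j}\colon F^\rho(\Gamma_i)\to\Delta_{i,j}$ with $\Delta_{i,j}$ $\lambda$-presentable; since all share the domain $F^\rho(\Gamma_i)$ and $\bfE$-morphisms are epic, orthogonality to $e_i$ is equivalent to orthogonality to the whole family $\{e_{i,j}\}_j$, so $\E=\orth{\{e_{i,j}\}}$. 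Finally \cref{lem:dense_between_presn} rewrites each $e_{i,j}$ as $\repn{\tup{x}^\rho.\phi^\rho}_\bT\to\repn{\tup{x}^\rho.\psi}_\bT$, which by \cref{prop:validity_for_PHL} is orthogonality to a single $\rho$-relative judgment, producing $\bT'$.

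For (ii)$\Rightarrow$(iii) I would check the four closures of $\orth{\Lambda}$ separately: closure under products is automatic for any orthogonality class; closure under $\lambda$-filtered colimits follows from \cref{lem:orth_closed_under_filcolim}, since each $e_i$ is epic with $\lambda$-presentable domain $F^\rho(\Gamma_i)$; closure under $\bT$-closed subobjects follows by pulling a closed mono $m\colon N\hookrightarrow M$ back along the filler into $M$ and using $e_i\perp(\text{pullback of }m)$ to land the filler in $N$; and closure under $U^\rho$-retracts uses the adjunction, transposing a map $F^\rho(\Gamma_i)\to N$ across a $U^\rho$-section to a map into $M\in\E$, lifting there, and pushing the lift back along the retraction $M\to N$, with uniqueness from epimorphy of $e_i$.

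The substantial direction is (iii)$\Rightarrow$(ii). By \cref{cor:E-reflective}, closure under products and $\bT$-closed ($\bfM$-)subobjects makes $\E\hookrightarrow\PMod\bT$ $\bfE$-reflective, with dense units $\eta_M\colon M\to rM$. I would take $\Lambda$ to be the essentially small family of all $\bfE$-morphisms $F^\rho(\Gamma)\to Q$ between $\lambda$-presentables that factor some $\eta_{F^\rho(\Gamma)}$, with $\Gamma$ $\lambda$-presentable in $\PMod\bS$; then $\E\subseteq\orth{\Lambda}$ is immediate from reflectivity. For the reverse inclusion, fix $M\in\orth{\Lambda}$; by \cref{thm:co-intersection} each $\eta_{F^\rho(\Gamma)}$ is the $\lambda$-filtered colimit of its $\Lambda$-factors, so $M\perp\eta_{F^\rho(\Gamma)}$. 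Writing $U^\rho M=\Colim{j}G_j$ with $G_j$ $\lambda$-presentable in $\PMod\bS$, the counit gives $c_j\colon F^\rho G_j\to M$; extending each $c_j$ along $\eta_{F^\rho G_j}$ yields $\hat c_j\colon rF^\rho G_j\to M$, and assembling gives $\hat c\colon R:=\Colim{j}rF^\rho G_j\to M$ with $R\in\E$ by closure under $\lambda$-filtered colimits. A computation with the triangle identities and naturality of the units then shows $U^\rho\hat c$ is a split epimorphism, so $M$ is a $U^\rho$-retract of $R\in\E$ and closure under $U^\rho$-retracts forces $M\in\E$. I expect this last bootstrap — realizing an arbitrary $M\in\orth{\Lambda}$ as a $U^\rho$-retract of a filtered colimit of reflections of the objects $F^\rho G_j$ — to be the main obstacle, since it is precisely where the $U^\rho$-retract hypothesis (rather than ordinary quotients) becomes indispensable.
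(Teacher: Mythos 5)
Your proposal is correct and follows essentially the same route as the paper: (i)$\Leftrightarrow$(ii) via \cref{prop:validity_for_PHL}, \cref{thm:co-intersection}, and \cref{lem:dense_between_presn}; (ii)$\Rightarrow$(iii) via \cref{lem:orth_closed_under_filcolim}, the lifting property of the (dense, closed--mono) factorization, and adjoint transposition for $U^\rho$-retracts; and (iii)$\Rightarrow$(ii) via \cref{cor:E-reflective} plus the counit/$U^\rho$-retract bootstrap. The only difference is cosmetic: in (iii)$\Rightarrow$(ii) you take $\Lambda$ to be the $\lambda$-presentable $\bfE$-factors of the reflection units and recover orthogonality to the units by \cref{thm:co-intersection}, whereas the paper takes the units $F^\rho A\to rF^\rho A$ themselves (for $A$ $\lambda$-presentable) and uses closure under $\lambda$-filtered colimits to pass to units at arbitrary $F^\rho X$; your colimit $R=\Colim{j}rF^\rho G_j$ is exactly the paper's $rF^\rho U^\rho(M)$, and your split-epimorphism computation is the paper's triangle-identity argument unrolled at the colimit level.
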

\begin{proof}
    {[\cref{thm:birkhoff_partialHorn-1}$\implies$\cref{thm:birkhoff_partialHorn-2}]}
    Consider the adjunction of \cref{thm:adjunction_induced_by_theory_mor}:
    \begin{equation}\label{eq:adj_F^rhoU^rho}
        \begin{tikzcd}[large]
            \PMod\bS\arrow[r,"F^\rho",shift left=7pt]\arrow[r,"\perp"pos=0.5,phantom] &[10pt]\PMod\bT\arrow[l,"U^\rho",shift left=7pt]
        \end{tikzcd}
    \end{equation}
    Let $\bT'=\{\,\phi^\rho_i\seq{\tup{x}^\rho_i}\psi_i\,\}_{i\in I}$.
    By the construction of $F^\rho$ in \cref{thm:adjunction_induced_by_theory_mor}, we can consider $F^\rho\repn{\tup{x}_i.\phi_i}_\bS=\repn{\tup{x}^\rho_i.\phi^\rho_i}_\bT$.
    By \cref{prop:validity_for_PHL}, $\PMod (\bT+\bT')\subseteq\PMod\bT$ is an orthogonality class with respect to the following class of morphisms:
    \begin{equation*}
        \Lambda\coloneq\{~F^\rho\repn{\tup{x}_i.\phi_i}_\bS \arr[\repn{\tup{x}^\rho_i}_\bT] \repn{\tup{x}^\rho_i.\phi^\rho_i\wedge\psi_i}_\bT~\}_{i\in I}.
    \end{equation*}
    By \cref{rem:explaination_closedmono_dense}\cref{rem:explaination_closedmono_dense-3}, it follows that all $\repn{\tup{x}^\rho_i}_\bT$ are $\bT$-dense; hence \cref{thm:birkhoff_partialHorn-2} holds.

    {[\cref{thm:birkhoff_partialHorn-2}$\implies$\cref{thm:birkhoff_partialHorn-1}]}
    For each $i\in I$, choose a Horn formula $\tup{x}_i.\phi_i$ satisfying $\Gamma_i\cong\repn{\tup{x}_i.\phi_i}_\bS$.
    By \cref{thm:co-intersection}, $e_i\in{F^\rho (\Gamma_i)}/\PMod\bT$ can be presented as the following $\lambda$-filtered colimit of $\bT$-dense morphisms $e_i^{(j)}$ whose codomain is $\lambda$-presentable:
    \[
        e_i=\Colim{j}e_i^{(j)} \incat{{F^\rho (\Gamma_i)}/\PMod\bT}.
    \]
    By \cref{lem:dense_between_presn}, each $e_i^{(j)}$ has the following presentation:
    \begin{equation*}
        F^\rho\repn{\tup{x}_i.\phi_i}_\bS \longarr[\repn{\tup{x}^\rho_i}_\bT] \repn{\tup{x}^\rho_i.\psi_i^{(j)}}_\bT.
    \end{equation*}
    Orthogonality to all $e_i^{(j)}$ is equivalent to orthogonality to all $e_i$.
    Thus, taking
    \begin{equation*}
        \bT'\coloneq\{\, \phi^\rho_i\seq{\tup{x}^\rho_i}\psi_i^{(j)} \,\}_{i,j},
    \end{equation*}
    we have $\PMod (\bT+\bT')=\orth{\Lambda}=\E$.
    
    {[\cref{thm:birkhoff_partialHorn-2}$\implies$\cref{thm:birkhoff_partialHorn-3}]}
    Orthogonality classes are in general closed under products.
    Since $U^\rho$ preserves $\lambda$-filtered colimits, $F^\rho$ preserves $\lambda$-presentable objects.
    Thus, $\E=\orth{\Lambda}\subseteq\PMod\bT$ is closed under $\lambda$-filtered colimits by \cref{lem:orth_closed_under_filcolim}.
    In the following, we show that it is also closed under $\bT$-closed subobjects and $U^\rho$-retracts.
    
    We first show that $\orth{\Lambda}\subseteq\PMod\bT$ is closed under $\bT$-closed subobjects.
    Let $m\colon M\hookrightarrow N$ be a $\bT$-closed monomorphism in $\PMod\bT$, and assume $N\in\orth{\Lambda}$.
    Take a morphism $f\colon F^\rho (\Gamma_i)\to M$ arbitrarily.
    By $N\in\orth{\Lambda}$, there exists a unique morphism $g\colon\Delta_i\to N$ which makes the following diagram
    \begin{equation}\label{eq:comm_square_xm}
        \begin{tikzcd}[huge]
            F^\rho (\Gamma_i) & M \\
            \Delta_i & N
            \arrow[from=1-1,to=2-1,"e_i"']
            \arrow[from=1-2,to=2-2,hook',"m"]
            \arrow[from=1-1,to=1-2,"f"]
            \arrow[from=2-1,to=2-2,dashed,"\exists !g"']
        \end{tikzcd}
    \end{equation}
    commute in $\PMod\bT$.
    Since $e_i$ is $\bT$-dense, \cref{thm:dense_closedmono_factorization} ensures the unique existence of a diagonal filler for \cref{eq:comm_square_xm}.
    This proves that $M$ belongs to $\orth{\Lambda}$.

    We next show that $\orth{\Lambda}\subseteq\PMod\bT$ is closed under $U^\rho$-retracts.
    Let $p\colon M\to N$ in $\PMod\bT$ be a $U^\rho$-retraction and assume $M\in\orth{\Lambda}$.
    To prove $N\in\orth{\Lambda}$, take a morphism $f\colon F^\rho (\Gamma_i)\to N$.
    Consider the morphism $f^\flat\colon\Gamma_i\to U^\rho N$ corresponding to $f$ by the adjunction $F^\rho\dashv U^\rho$.
    Since $U^\rho p$ is a retraction, there exists a morphism $g\colon\Gamma_i\to U^\rho M$ satisfying $(U^\rho p)g=f^\flat$.
    Consider the morphism $g^\sharp\colon F^\rho (\Gamma_i)\to M$ corresponding to $g$ by $F^\rho\dashv U^\rho$.
    Since $M\in\orth{\Lambda}$, there exists a unique $h\colon\Delta_i\to M$ such that $he_i=g^\sharp$.
    Then $ph e_i=pg^\sharp=f$ holds.
    Moreover, such $ph$ is unique since $ e_i$ is an epimorphism.
    Therefore, $N$ is orthogonal to $ e_i\,(\forall i\in I)$.
    \begin{equation*}
        \begin{tikzcd}[huge]
            \Gamma_i\arrow[r,"\exists g",dashed]\arrow[rd,"f^\flat"',pos=0.7]  &  U^\rho M\arrow[d,"U^\rho p"]  \\
            &  U^\rho N
        \end{tikzcd}\incat{\PMod\bS}
        \quad\quad
        \begin{tikzcd}[huge]
            F^\rho (\Gamma_i)\arrow[d," e_i"']\arrow[r,"g^\sharp"]\arrow[rd,"f"',pos=0.7]  &  M\arrow[d,"p"]  \\
            \Delta_i\arrow[ru,"\exists !h",pos=0.7,crossing over,dashed]  &  N
        \end{tikzcd}\incat{\PMod\bT}
    \end{equation*}

    {[\cref{thm:birkhoff_partialHorn-3}$\implies$\cref{thm:birkhoff_partialHorn-2}]}
    In what follows, let $\A\coloneq\PMod\bS$.
    Since $\E\subseteq\PMod\bT$ is closed under products and $\bT$-closed subobjects, 
    \cref{prop:E-reflective} claims that the adjoint
    \begin{equation*}
        \begin{tikzcd}[large]
            \PMod\bT\arrow[r,"r",shift left=7pt]\arrow[r,"\perp"pos=0.5,phantom] &[10pt]\E\arrow[l,shift left=7pt,hook']
        \end{tikzcd}
    \end{equation*}
    exists, and the component of its unit $e$ at $M$
    \begin{equation*}
        M\longarr[e_M]rM\incat{\PMod\bT}
    \end{equation*}
    is $\bT$-dense.
    Consider the following classes of morphisms in $\PMod\bT$:
    \begin{gather*}
        \Lambda\coloneq\{\, F^\rho A\longarr[e_{F^\rho A}]rF^\rho A \,\}_{A\in\lpresn{\A}},
        \\
        \Lambda^*\coloneq\{\, F^\rho X\longarr[e_{F^\rho X}]rF^\rho X \,\}_{X\in\A}.
    \end{gather*}
    Here, $\lpresn{\A}$ denotes the full subcategory of $\A$ consisting of all $\lambda$-presentable objects.
    Every $X\in\A$ can be presented as a $\lambda$-filtered colimit $X=\Colim{I\in\bI}X_I$ in which $X_I$ are $\lambda$-presentable.
    Since $\E\subseteq\PMod\bT$ is closed under $\lambda$-filtered colimits, 
    $e_{F^\rho X}=\Colim{I\in\bI}e_{F^\rho X_I}$ is a $\lambda$-filtered colimit in the arrow category $(\PMod\bT)^\to$.
    Thus, $\orth{\Lambda}=\orth{\Lambda^*}$ holds.

    Take an object $M\in\PMod\bT$ satisfying $M\in\orth{\Lambda^*}$.
    Let $\epsilon$ denote the counit of the adjunction \cref{eq:adj_F^rhoU^rho}.
    By $e_{F^\rho U^\rho (M)}\in\Lambda^*$, there exists a unique morphism $p$ which makes the following diagram
    \begin{equation*}
        \begin{tikzcd}[huge]
            F^\rho U^\rho(M)\arrow[d,"e_{F^\rho U^\rho(M)}"']\arrow[r,"\epsilon_M"] & M \\
            rF^\rho U^\rho(M)\arrow[ur,dashed,"\exists !p"'] &
        \end{tikzcd}
    \end{equation*}
    commute in $\PMod\bT$.
    Since $U^\rho\epsilon_M$ is a retraction, $U^\rho p$ is also a retraction, which implies that $p$ is a $U^\rho$-retraction.
    Since $\E\subseteq\PMod\bT$ is closed under $U^\rho$-retracts, we have $M\in\E$.
    By the above argument, we have $\orth{\Lambda}=\E$.
\end{proof}

The above theorem has several useful corollaries.
We obtain the first corollary by taking $\rho$ as the trivial one $(S,\varnothing,\varnothing)\to (S,\Sigma,\bT)$:
\begin{corollary}\label{cor:cor_of_birkhoff_1}
    Let $\bT$ be a $\lambda$-ary partial Horn theory over $\Sigma$.
    Then, for every replete full subcategory $\E\subseteq\PMod\bT$, the following are equivalent:
    \begin{enumerate}
        \item
        $\E$ is definable by $\lambda$-ary Horn formulas, i.e.,
        there exists a set $E$ of $\lambda$-ary Horn formulas over $\Sigma$ satisfying $\E=\PMod (\bT+\bT')$, where $\bT'\coloneq\{ \top\seq{\tup{x}}\phi\}_{\tup{x}.\phi\in E}.$
        \item
        $\E\subseteq\PMod\bT$ is closed under products, $\Sigma$-closed subobjects, surjections, and $\lambda$-filtered colimits.
    \end{enumerate}
\end{corollary}

Taking $\rho$ as the identity $\bT\to\bT$, we obtain the second corollary:
\begin{corollary}\label{cor:cor_of_birkhoff_2}
    Let $\bT$ be a $\lambda$-ary partial Horn theory over $\Sigma$.
    Then, for every replete full subcategory $\E\subseteq\PMod\bT$, the following are equivalent:
    \begin{enumerate}
        \item
        $\E$ is definable by $\lambda$-ary Horn sequents, i.e.,
        there exists a set $\bT'$ of $\lambda$-ary Horn sequents over $\Sigma$ satisfying $\E=\PMod (\bT+\bT')$.
        \item
        $\E\subseteq\PMod\bT$ is closed under products, $\Sigma$-closed subobjects, and $\lambda$-filtered colimits.
    \end{enumerate}
\end{corollary}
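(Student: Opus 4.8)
The plan is to obtain the corollary as the special case of \cref{thm:birkhoff_partialHorn} for the identity theory morphism $\rho=\id\colon\bT\to\bT$. With this choice $U^\rho$ is the identity endofunctor of $\PMod\bT$, so a $U^\rho$-retract is nothing but a retract; and since here $\Sigma'=\Sigma$ and the $\rho$-translation is the identity substitution, a $\rho$-relative $\lambda$-ary judgment is precisely a $\lambda$-ary Horn sequent over $\Sigma$. Thus condition (i) of the corollary is verbatim condition \ref{thm:birkhoff_partialHorn-1} of \cref{thm:birkhoff_partialHorn}, and condition (ii) of the corollary is condition \ref{thm:birkhoff_partialHorn-3} with the single clause ``closed under $U^\rho$-retracts'' removed. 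The implication (i) $\implies$ (ii) is then immediate: \cref{thm:birkhoff_partialHorn} gives all four closure properties, and we merely discard closure under retracts.

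For (ii) $\implies$ (i) the only missing ingredient is that a replete $\E\subseteq\PMod\bT$ closed under products and $\Sigma$-closed subobjects is automatically closed under retracts; granting this, condition \ref{thm:birkhoff_partialHorn-3} holds in full and \cref{thm:birkhoff_partialHorn} yields definability. To establish the retract closure I would work with the (dense, closed--mono)-factorization system of \cref{thm:dense_closedmono_factorization} and apply \cref{cor:E-reflective}: closure under products and $\Sigma$-closed subobjects makes the inclusion $\E\hookrightarrow\PMod\bT$ reflective with a unit $\eta$ whose components are $\bT$-dense, and $\bT$-dense morphisms are epimorphisms since the factorization system is proper. Writing $L$ for the reflector, so $\eta_X\colon X\to LX$, suppose $N$ is a retract of some $M\in\E$, say $rs=\id_N$ with $s\colon N\to M$ and $r\colon M\to N$. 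As $M\in\E$, the unit $\eta_M$ is invertible, and naturality of $\eta$ along $s$ gives $(Ls)\circ\eta_N=\eta_M\circ s$; hence $t:=r\circ\eta_M^{-1}\circ(Ls)$ satisfies $t\circ\eta_N=r\circ s=\id_N$, so $\eta_N$ is a split monomorphism. Being simultaneously an epimorphism, $\eta_N$ is invertible, whence $N\cong LN\in\E$, and $N\in\E$ by repleteness.

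The crux of the whole argument, and its only real content beyond bookkeeping, is exactly this retract-closure step: \cref{thm:birkhoff_partialHorn} demands closure under $U^\rho$-retracts, a hypothesis the corollary drops, so the work lies in showing that for $\rho=\id$ this hypothesis is redundant. The key observation making it redundant is that the reflection unit into $\E$ is at once $\bT$-dense (hence epic) and---for a retract of an object of $\E$---split monic, and an epimorphism that is a split monomorphism must be an isomorphism. Note that closure under $\lambda$-filtered colimits plays no role in this reduction; it is used only insofar as it is one of the three hypotheses feeding into condition \ref{thm:birkhoff_partialHorn-3}.
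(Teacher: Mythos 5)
Your proposal is correct, and its skeleton---specializing \cref{thm:birkhoff_partialHorn} to the identity theory morphism $\rho=\id\colon\bT\to\bT$, noting that $\rho$-relative judgments become arbitrary $\lambda$-ary Horn sequents and $U^\rho$-retracts become retracts, and then showing that the retract hypothesis is redundant---is the same reduction the paper uses. Where you diverge is in how that redundancy is established. The paper disposes of it in one line: every split monomorphism is $\Sigma$-closed, so if $N$ is a retract of $M\in\E$, the section $N\to M$ already exhibits $N$ as a $\Sigma$-closed subobject of $M$, and closure under $\Sigma$-closed subobjects finishes the proof. (This fact is elementary: the retraction is itself a homomorphism, so definedness of operations and membership in relations are reflected along the section, which is $\Sigma$-closedness in the sense of \cref{rem:explaination_closedmono_dense}; categorically, a split monomorphism is right orthogonal to every epimorphism, hence lies in $\rorth(\Lambda)$ for the class $\Lambda$ of epimorphisms defining closedness in the proof of \cref{thm:dense_closedmono_factorization}.) Your route instead invokes \cref{cor:E-reflective} to obtain a reflection with $\bT$-dense, hence epic, unit, and then shows that for a retract $N$ of an object of $\E$ the unit $\eta_N$ is both split monic and epic, hence invertible. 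This argument is valid---the naturality computation and the use of properness of the factorization system are correct---but it is heavier than necessary: it rebuilds the reflectivity machinery that the proof of \cref{thm:birkhoff_partialHorn} constructs anyway, in order to prove something that is really a pointwise fact about sections. What your version buys is independence from the specific description of $\bfM$: it shows that for any proper $\lambda$-presentable factorization system, closure under products and $\bfM$-subobjects forces closure under retracts, whereas the paper's one-liner exploits the concrete fact that split monomorphisms land in the class of $\Sigma$-closed monomorphisms.
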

\begin{proof}
    Since every split monomorphism is $\Sigma$-closed, being closed under $\Sigma$-closed subobjects implies being closed under retracts.
    Thus, this follows from \cref{thm:birkhoff_partialHorn}.
\end{proof}

\begin{example}\label{eg:birkhoff_thm_depends_on_syntax}
    Given a replete full subcategory $\E$ of a locally presentable category $\A$, 
    whether $\E\subseteq\A$ is definable by Horn sequents depends on the choice of a partial Horn theory $\bT$ for $\A$.
    For example, let $\A\coloneq\Mon$ be the category of monoids and let $\E\coloneq\Grp$ be the category of groups, and consider the finitary partial Horn theories $\bT_\mon$ and $\bT'_\mon$ as in \cref{eg:PHT_for_monoids}.
    Then $\Grp\subseteq\PMod\bT'_\mon$ is definable by the following Horn sequent:
    \begin{gather*}
        \top\seq{x} x^{-1}\defined.
    \end{gather*}
    On the other hand, $\Grp\subseteq\PMod\bT_\mon$ cannot be definable by Horn sequents, because $\bN$ is a $\bT_\mon$-closed subobject of $\bZ$ and not a group even though $\bZ$ is a group.
\end{example}

Taking $\rho$ as a ``relative algebraic theory,'' we get the following theorem.
This theorem is a generalization of Birkhoff's variety theorem from classical algebraic theories to our relative algebraic theories.

\begin{theorem}[Birkhoff-type theorem for relative algebraic theories I]\label{thm:birkhoff_for_rat}
    Let $\bS$ be a $\lambda$-ary partial Horn theory over an $S$-sorted $\lambda$-ary signature $\Sigma$.
    Let $(\Omega,E)$ be an $\bS$-relative $\lambda$-ary algebraic theory with the forgetful functor $U\colon \Alg(\Omega,E)\to\PMod\bS$.
    Then, for every replete full subcategory $\E\subseteq\Alg(\Omega,E)$, the following are equivalent:
    \begin{enumerate}
        \item
        $\E$ is definable by $\bS$-relative $\lambda$-ary judgments, i.e.,
        there exists a set $E'$ of $\bS$-relative $\lambda$-ary judgments satisfying $\E=\Alg(\Omega,E+E')$.
        \item
        $\E\subseteq\Alg(\Omega,E)$ is closed under products, $\Sigma$-closed subobjects, $U$-retracts, and $\lambda$-filtered colimits.
    \end{enumerate}
\end{theorem}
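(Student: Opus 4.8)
The plan is to deduce this from the Birkhoff-type theorem for partial Horn theories (\cref{thm:birkhoff_partialHorn}). First I would set $\bT:=\pht{\Omega}{E}$, so that by \cref{def:pht_for_rat} there is an isomorphism $\Alg(\Omega,E)\cong\PMod\bT$, and take $\rho\colon\bS\to\bT$ to be the inclusion theory morphism, which fixes every sort and every symbol of $\Sigma$. With this choice, \cref{thm:birkhoff_partialHorn} specializes to exactly the statement we want, \emph{provided} we match up the four closure conditions and the two notions of definability across the isomorphism $\Alg(\Omega,E)\cong\PMod\bT$. Since this isomorphism preserves products and $\lambda$-filtered colimits, those two conditions transfer verbatim. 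Moreover, because $\rho$ is the inclusion, the translation functor $U^\rho\colon\PMod\bT\to\PMod\bS$ simply restricts a $\bT$-model to its $\Sigma$-structure, forgetting the $\Omega$-operations; under the isomorphism this is precisely the forgetful functor $U$, so $U$-retracts coincide with $U^\rho$-retracts.

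Next I would identify the two notions of definability. By the remark following the definition of $\rho$-relative judgments, an $\bS$-relative $\lambda$-ary judgment (in the sense of \cref{def:relative_alg_theory}\ref{def:relative_alg_theory-2}) is precisely a $\rho$-relative $\lambda$-ary judgment for this inclusion $\rho$. Furthermore $\pht{\Omega}{E+E'}=\bT\cup E'$, so $\Alg(\Omega,E+E')\cong\PMod(\bT+E')$; hence the existence of a set $E'$ of $\bS$-relative judgments with $\E=\Alg(\Omega,E+E')$ is the same as the existence of a set $\bT'$ of $\rho$-relative judgments with $\E=\PMod(\bT+\bT')$, matching condition \ref{thm:birkhoff_partialHorn-1} of \cref{thm:birkhoff_partialHorn}.

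The only genuinely substantive point is to show that the $\Sigma$-closed subobjects of $\Alg(\Omega,E)$ coincide with the $\bT$-closed (that is, $(\Sigma+\Omega)$-closed) subobjects of $\PMod\bT$, and this is the step I expect to require the most care. One inclusion is immediate, since a $(\Sigma+\Omega)$-closed monomorphism is in particular $\Sigma$-closed. For the converse, let $m\colon\bA\hookrightarrow\bB$ be a $\Sigma$-closed monomorphism of $(\Omega,E)$-algebras, with underlying $\bS$-models $A$ and $B$. By induction on $\Sigma$-terms and $\Sigma$-Horn formulas, a $\Sigma$-closed monomorphism not only preserves but also reflects the interpretation of every Horn formula over $\Sigma$; that is, $\intpn{\tup{x}.\theta}{A}=\intpn{\tup{x}.\theta}{B}\cap\prod_{i}A_{s_i}$ for every Horn formula $\tup{x}.\theta$ over $\Sigma$. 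Applying this to $\theta=\ar(\omega)$ and using that in any $(\Omega,E)$-algebra the domain of $\intpn{\omega}{\bullet}$ equals $\intpn{\ar(\omega)}{\bullet}$ (by the axiom $\omega(\tup{x})\defined\biseq{\tup{x}}\ar(\omega)$ built into $\bT$), the pullback square defining closedness at the function symbol $\omega$ holds automatically. Thus $m$ is $(\Sigma+\Omega)$-closed. This is exactly where the defining feature of relative algebraic theories is used: the arity of each operator is a $\Sigma$-formula that controls precisely its domain of definition, so closedness with respect to the base signature already forces closedness with respect to the operators.

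With all four closure conditions and the definability clause translated across $\Alg(\Omega,E)\cong\PMod\bT$, the equivalence \ref{thm:birkhoff_partialHorn-1}$\Leftrightarrow$\ref{thm:birkhoff_partialHorn-3} of \cref{thm:birkhoff_partialHorn} yields the claimed equivalence, completing the proof.
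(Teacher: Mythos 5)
Your proposal is correct and follows essentially the same route as the paper: specialize \cref{thm:birkhoff_partialHorn} to the inclusion theory morphism $\rho\colon(S,\Sigma,\bS)\to(S,\Sigma+\Omega,\pht{\Omega}{E})$, identify $U$ with $U^\rho$, and observe that $\Sigma$-closedness coincides with $(\Sigma+\Omega)$-closedness. Your inductive argument that $\Sigma$-closed monomorphisms reflect interpretations of $\Sigma$-Horn formulas (hence handle the pullback squares at each $\omega\in\Omega$ via the axiom $\omega(\tup{x})\defined\biseq{\tup{x}}\ar(\omega)$) is exactly the justification the paper compresses into its one-sentence remark that the domain of every operator is determined by a Horn formula over $\Sigma$.
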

\begin{proof}
    Let $\pht{\Omega}{E}$ be the partial Horn theory as in \cref{def:pht_for_rat} and consider the extension $\rho\colon (S,\Sigma,\bS)\to (S,\Sigma+\Omega,\pht{\Omega}{E})$.
    Then, the forgetful functor $U$ coincides with $U^\rho$.
    Since the domain of every operator in $\Omega$ is determined by a Horn formula over $\Sigma$, $(\Sigma+\Omega)$-closedness is equivalent to $\Sigma$-closedness.
    Thus, we have proved this theorem by \cref{thm:birkhoff_partialHorn}.
\end{proof}

We also obtain two arity-free versions of \cref{thm:birkhoff_partialHorn}: bounded one (\cref{cor:bounded_birkhoff}) and unbounded one (\cref{cor:unbounded_birkhoff}).
Under Vop\v{e}nka's principle, these two versions coincide.
\begin{corollary}\label{cor:bounded_birkhoff}
    Let $\rho\colon\bS\to\bT$ be a theory morphism between $\lambda$-ary partial Horn theories.
    Then, for every replete full subcategory $\E\subseteq\PMod\bT$, the following are equivalent:
    \begin{enumerate}
        \item
        $\E$ is definable by $\rho$-relative $\mu$-ary judgments for some infinite regular cardinal $\mu\ge\lambda$.
        \item
        $\E$ is a small-orthogonality class with respect to a family of $\bT$-dense morphisms
        \[
            \Lambda=\{F^\rho (\Gamma_i)\longarr[e_i]\Delta_i\}_{i\in I}.
        \]
        \item
        $\E\subseteq\PMod\bT$ is closed under products, $\bT$-closed subobjects, $U^\rho$-retracts, and $\mu$-filtered colimits for some infinite regular cardinal $\mu\ge\lambda$.
    \end{enumerate}
\end{corollary}

\begin{corollary}\label{cor:unbounded_birkhoff}
    Let $\rho\colon\bS\to\bT$ be a theory morphism between $\lambda$-ary partial Horn theories.
    Then, for every replete full subcategory $\E\subseteq\PMod\bT$, the following are equivalent:
    \begin{enumerate}
        \item
        $\E$ is definable by a (not necessarily small) family of $\rho$-relative judgments.
        \item\label{cor:unbounded_birkhoff-2}
        $\E$ is an orthogonality class with respect to a (not necessarily small) family of $\bT$-dense morphisms
        \[
            \Lambda=\{F^\rho (\Gamma_i)\longarr[e_i]\Delta_i\}_{i\in I}.
        \]
        \item\label{cor:unbounded_birkhoff-3}
        $\E\subseteq\PMod\bT$ is closed under products, $\bT$-closed subobjects, and $U^\rho$-retracts.
    \end{enumerate}
\end{corollary}
\begin{proof}
    The proof is essentially the same as \cref{thm:birkhoff_partialHorn}.
\end{proof}

\begin{remark}
    The unbounded Birkhoff theorem (\cref{cor:unbounded_birkhoff}) also follows from a related result \cite[Theorem 3.16]{miliusurbat2019equational} if $U^\rho$ is conservative, which includes the case that $\rho$ is a ``relative algebraic theory.''
    To apply the framework \cite{miliusurbat2019equational}, let $\A_0=\A\coloneq\PMod\bT$, and let $(\E,\M)$ be the orthogonal factorization system on $\A$ of $\bT$-dense morphisms and $\bT$-closed monomorphisms, $\Lambda$ be the class of all (small) cardinal numbers, and $\X$ be the class of all free algebras $F^\rho(X)$.
    Then, the class of morphisms $\E_\X$ as in \cite{miliusurbat2019equational} coincides with the class of $\bT$-dense $U^\rho$-retractions.
    
    We now need the fact that, if $U^\rho$ is conservative, $U^\rho$-retractions are strong epi, hence $\bT$-dense.
    To show this, let $q$ be a $U^\rho$-retraction.
    Since every locally presentable category has (strong epi, mono)-factorization system \cite[1.61 Proposition]{adamek1994locally}, we can take a factorization $q=me$ such that $e$ is strong epi and $m$ is mono.
    Then, $Um$ is also mono since $U$ is a right adjoint.
    Since $Uq$ is a retraction, $Um$ must be an isomorphism.
    Thus, if $U$ is conservative, $m$ is also an isomorphism, which shows that $q$ is a strong epimorphism.

    As a result, $\E_\X$ coincides with the class of all $U^\rho$-retractions, and we can verify all assumptions required in \cite{miliusurbat2019equational}.
    Then, the notion of \emph{varieties} considered in \cite{miliusurbat2019equational} coincides with ours, and the equivalence between \cref{cor:unbounded_birkhoff}\cref{cor:unbounded_birkhoff-2} and \cref{cor:unbounded_birkhoff-3} follows from \cite[Theorem 3.16]{miliusurbat2019equational} directly.
\end{remark}

\section{A syntactic description of accessible monads}\label{section:monad}
Our next goal is to establish an equivalence between our relative algebraic theories and accessible monads (\cref{thm:equiv_between_monad_and_rat}).
We split its proof into two directions: from relative algebraic theories to monads, and from monads to relative algebraic theories.
In the latter direction, Birkhoff's theorem for relative algebras (\cref{thm:birkhoff_for_rat}) plays a crucial role.

\subsection{From relative algebraic theories to monads}
Throughout this subsection, we fix an $S$-sorted $\lambda$-ary signature $\Sigma$ and a $\lambda$-ary partial Horn theory $\bS$ over $\Sigma$.
We now prove that every $\bS$-relative $\lambda$-ary algebraic theory yields a $\lambda$-ary monad on $\PMod\bS$.

\begin{definition}
    Let $\lambda$ be an infinite regular cardinal.
    A functor (or monad) is called \emph{$\lambda$-ary} or \emph{$\lambda$-accessible} if it preserves $\lambda$-filtered colimits.
\end{definition}

Our goal in this subsection is to prove that the category of models of an $\bS$-relative $\lambda$-ary algebraic theory is (strictly) $\lambda$-ary monadic over $\PMod\bS$.
This is one direction of our main theorem (\cref{thm:equiv_between_monad_and_rat}).

\begin{notation}
    Given an endofunctor $H\colon\C\to\C$, we will denote by $\Alg H$ the inserter from $H$ to $\Id_\C$, i.e., 
    $\Alg H$ is the category whose objects are a pairs $(X,x)$ consisting of:
    \begin{itemize}
        \item
        an object $X\in\C$ and
        \item
        a morphism $H(X)\arr[x] X$ in $\C$,
    \end{itemize}
    and whose morphisms $f\colon (X,x)\to (Y,y)$ are given by a morphism $X\arr[f] Y$ in $\C$ such that the following diagram
    \begin{equation*}
        \begin{tikzcd}
            H(X)\arrow[d,"x"']\arrow[r,"H(f)"] & H(Y)\arrow[d,"y"] \\
            X\arrow[r,"f"'] & Y
        \end{tikzcd}
    \end{equation*}
    commutes in $\C$.
\end{notation}

\begin{definition}\label{def:endofunc_for_signature}
    Let $\Omega$ be an $\bS$-relative $\lambda$-ary signature.
    We now define a $\lambda$-ary endofunctor $H_\Omega\colon \PMod\bS\to\PMod\bS$ by the following:
    \begin{equation*}
        H_\Omega(A)\coloneq\coprod_{\omega\in\Omega}\intpn{\ar(\omega)}{A}\copower\repn{x\ofsort\outsort(\omega).\top}_\bS.
    \end{equation*}
    Here, $\intpn{\ar(\omega)}{A}\copower\repn{x\ofsort\outsort(\omega).\top}_\bS$ is the copower of $\repn{x\ofsort\outsort(\omega).\top}_\bS$ by the set $\intpn{\ar(\omega)}{A}$.
\end{definition}

\begin{lemma}\label{lem:iso_alg_alg}
    For every $\bS$-relative $\lambda$-ary signature $\Omega$, there exists an isomorphism of categories $\Alg H_\Omega\cong\Alg\Omega$ that commutes with the forgetful functors.
\end{lemma}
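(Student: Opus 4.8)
The plan is to build the isomorphism directly from the universal properties encoded in the definition of $H_\Omega$ (\cref{def:endofunc_for_signature}), matching inserter structures with $\Omega$-algebra structures object-by-object and then checking that the two notions of morphism coincide. The key computation is a chain of natural bijections, for each $A\in\PMod\bS$,
\begin{align*}
    \PMod\bS(H_\Omega A,\, A)
    &\cong \prod_{\omega\in\Omega}\PMod\bS\bigl(\intpn{\ar(\omega)}{A}\copower\repn{x\oftype\type(\omega).\top}_\bS,\, A\bigr)\\
    &\cong \prod_{\omega\in\Omega}\Set\bigl(\intpn{\ar(\omega)}{A},\, \PMod\bS(\repn{x\oftype\type(\omega).\top}_\bS,\, A)\bigr)\\
    &\cong \prod_{\omega\in\Omega}\Set\bigl(\intpn{\ar(\omega)}{A},\, A_{\type(\omega)}\bigr),
\end{align*}
where the first isomorphism is the universal property of the coproduct, the second is that of the copower, and the third is \cref{prop:repn_obj_represents_intpn} applied to the formula $x\oftype\type(\omega).\top$, whose interpretation in $A$ is $A_{\type(\omega)}$. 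The right-hand side is precisely a choice of maps $\intpn{\omega}{\bA}\colon\intpn{\ar(\omega)}{A}\to A_{\type(\omega)}$ for every $\omega\in\Omega$, i.e.\ exactly the data making $A$ into an $\Omega$-algebra $\bA$. This assigns to each inserter object $(A,a)$ an $\Omega$-algebra with the same underlying $\bS$-model, and conversely.

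Next I would verify that this assignment respects morphisms. A morphism $f\colon A\to B$ in $\PMod\bS$ is a morphism $(A,a)\to(B,b)$ in $\Alg H_\Omega$ exactly when $f\circ a=b\circ H_\Omega(f)$. Unwinding the action of $H_\Omega$ on $f$ summand-wise---on the $\omega$-summand it reindexes the copies of $\repn{x\oftype\type(\omega).\top}_\bS$ along the map $\intpn{\ar(\omega)}{f}\colon\intpn{\ar(\omega)}{A}\to\intpn{\ar(\omega)}{B}$ induced functorially by \cref{prop:repn_obj_represents_intpn}---and transporting along the bijection above, the inserter equation becomes, for each $\omega$ and each $\tup{m}\in\intpn{\ar(\omega)}{A}$,
\[
    f_{\type(\omega)}\bigl(\intpn{\omega}{\bA}(\tup{m})\bigr)=\intpn{\omega}{\bB}\bigl(\intpn{\ar(\omega)}{f}(\tup{m})\bigr).
\]
This is exactly the condition that $f$ preserve the operation $\omega$; since $f$ is already a $\Sigma$-homomorphism, it says precisely that $f$ is a $(\Sigma+\Omega)$-homomorphism $\bA\to\bB$. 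Hence the two notions of morphism agree under the object correspondence.

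Finally, I would record that both assignments are functorial and mutually inverse: identities go to identities, and the correspondence is the identity on underlying $\bS$-models and on underlying maps, so composition is preserved automatically. This yields an isomorphism of categories $\Alg H_\Omega\cong\Alg\Omega$. Because the correspondence leaves the underlying object $A$ and the underlying homomorphism $f$ untouched, it commutes strictly with the forgetful functors to $\PMod\bS$.

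The computation is essentially formal, so the only genuine care needed is in the morphism step: one must track how $H_\Omega(f)$ reindexes the copies of $\repn{x\oftype\type(\omega).\top}_\bS$ and confirm that, after applying the copower and representation isomorphisms, the inserter square collapses to the pointwise preservation equation displayed above. I expect this bookkeeping---rather than any conceptual difficulty---to be the main obstacle.
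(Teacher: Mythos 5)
Your proposal is correct and is essentially the paper's own argument: the proof in the paper consists of exactly the same chain of bijections (coproduct, copower, then \cref{prop:repn_obj_represents_intpn} applied to $x\oftype\type(\omega).\top$), with the morphism-level verification left implicit. Your additional bookkeeping that the inserter square collapses to the pointwise preservation equation simply fills in details the paper omits.
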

\begin{proof}
    For each $A\in\PMod\bS$, the following data correspond bijectively:
    \begin{center}
    \renewcommand{\arraystretch}{1.3}
    \begin{tabular}{c}
        $H_\Omega(A)\longarr A\incat{\PMod\bS}$
        \\
        \hline\hline
        $\intpn{\ar(\omega)}{A}\copower\repn{x\ofsort\outsort(\omega).\top}_\bS \longarr A\incat{\PMod\bS}\quad(\omega\in\Omega)$
        \\
        \hline\hline
        $\intpn{\ar(\omega)}{A}\longarr\PMod\bS ( \repn{x\ofsort\outsort(\omega).\top}_\bS , A )\incat{\Set}\quad(\omega\in\Omega)$
        \\
        \hline\hline
        $\intpn{\ar(\omega)}{A}\longarr A_{\outsort(\omega)}\incat{\Set}\quad(\omega\in\Omega)$
    \end{tabular}
    \end{center}
    This gives a desired isomorphism of categories.
\end{proof}

\begin{remark}\label{rem:alg_omegaE_orthogonal}
    Let $(\Omega,E)$ be an $\bS$-relative $\lambda$-ary algebraic theory and let $\A\coloneq\PMod\bS$.
    By \cref{lem:iso_alg_alg}, we have a left adjoint $\bF$ to the forgetful functor $U$ (See \cite[{}2.11]{Adamekporst2001fromvarieties}):
    \begin{equation*}
        \begin{tikzcd}[large]
            \A\arrow[r,"\bF",shift left=7pt]\arrow[r,"\perp"pos=0.5,phantom] &[10pt]\Alg\Omega.\arrow[l,"U",shift left=7pt]
        \end{tikzcd}
    \end{equation*}
    Note that $U$ is strictly monadic (see \cite[{}20.57]{adamekherrlichstrecker2006joy}) and that $U$ preserves $\lambda$-filtered colimits.
    Let $E=\{\phi_i\seq{\tup{x}_i}\psi_i\}_{i\in I}$.
    Since
    \begin{equation*}
        \Alg\Omega(\bF\repn{\tup{x}_i.\phi_i}_\bS , \bA)
        \cong \A(\repn{\tup{x}_i.\phi_i}_\bS , U\bA)
        \cong \intpn{\tup{x}_i.\phi_i}{U\bA}
        = \intpn{\tup{x}_i.\phi_i}{\bA}
    \end{equation*}
    holds naturally, we can assume $\bF\repn{\tup{x}_i.\phi_i}_\bS = \repn{\tup{x}_i.\phi_i}$ in $\Alg\Omega$.
    Here, $\repn{\tup{x}_i.\phi_i}$ is the abbreviation for $\repn{\tup{x}_i.\phi_i}_{\pht{\Omega}{\varnothing}}$, where $\pht{\Omega}{\varnothing}$ is the partial Horn theory for $\Alg\Omega$ as in \cref{def:pht_for_rat}.
    By \cref{prop:validity_for_PHL}, $\Alg(\Omega,E)\subseteq\Alg\Omega$ is the full subcategory of objects orthogonal to the following:
    \begin{equation*}
        \Lambda\coloneq\{~\bF\repn{\tup{x}_i.\phi_i}_\bS \arr[\repn{\tup{x}_i}] \repn{\tup{x}_i.\psi_i}~\}_{i\in I}.
    \end{equation*}
    Since every small-orthogonality class of a locally presentable category is reflective (\cite[Theorem 5.4.7]{borceux1994handbook1}), we get the following adjunction:
    \begin{equation*}
        \begin{tikzcd}[large]
            \Alg\Omega\arrow[r,"r",shift left=7pt]\arrow[r,"\perp"pos=0.5,phantom] &[10pt]\Alg(\Omega,E)~(=\orth{\Lambda}).\arrow[l,shift left=7pt,hook']
        \end{tikzcd}\qedhere
    \end{equation*}
\end{remark}

\begin{theorem}\label{thm:from_alg_to_monad}
    Let $(\Omega,E)$ be an $\bS$-relative $\lambda$-ary algebraic theory.
    Then, $\Alg(\Omega,E)$ is a strictly $\lambda$-ary monadic category over $\PMod\bS$.
\end{theorem}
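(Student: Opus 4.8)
The plan is to apply Beck's strict monadicity theorem to the forgetful functor $V\colon\Alg(\Omega,E)\to\A$, where $\A:=\PMod\bS$, and separately to check that the induced monad is $\lambda$-ary. Throughout I work inside $\Alg\Omega$: by \cref{cor:alg_omega_monadic} the forgetful functor $U\colon\Alg\Omega\to\A$ is strictly $\lambda$-ary monadic with left adjoint $\bF$, and by \cref{rem:alg_omegaE_orthogonal} the full subcategory $\Alg(\Omega,E)$ is the orthogonality class $\orth{\Lambda}$ for
\[
    \Lambda=\{\,\bF\repn{\tup{x}_i.\phi_i}_\bS\xrightarrow{\ \repn{\tup{x}_i}\ }\repn{\tup{x}_i.\psi_i}\,\}_{i\in I},
\]
with reflection $r\colon\Alg\Omega\to\orth{\Lambda}$ along the inclusion $\iota$, so that $V=U\iota$.

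First, $V$ inherits the left adjoint $r\bF$. Next I claim $V$ creates $\lambda$-filtered colimits: $U$ does so (see the proof of \cref{lem:finitary_endofunctor_monadic_adjoint}), and each morphism in $\Lambda$ is an epimorphism whose domain $\bF\repn{\tup{x}_i.\phi_i}_\bS$ is $\lambda$-presentable in $\Alg\Omega$ — because $\bF$, being left adjoint to the $\lambda$-ary functor $U$, preserves $\lambda$-presentable objects, and $\repn{\tup{x}_i.\phi_i}_\bS$ is $\lambda$-presentable by \cref{thm:repn_enumerates_presn}. Hence $\orth{\Lambda}$ is closed under $\lambda$-filtered colimits in $\Alg\Omega$ by \cref{lem:orth_closed_under_filcolim}, so $V$ creates them. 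Since $r\bF$ preserves all colimits, this already shows the induced monad $V r\bF$ is $\lambda$-ary.

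The crucial step is that $V$ strictly creates coequalizers of $V$-split pairs. Given such a pair $f,g\colon\bA\rightrightarrows\bB$ in $\Alg(\Omega,E)$, its image in $\Alg\Omega$ is $U$-split, so strict monadicity of $U$ furnishes a unique coequalizer $q\colon\bB\to\bC$ in $\Alg\Omega$ lying over the given split coequalizer in $\A$; it then remains only to show $\bC\in\orth{\Lambda}$, whence by fullness $q$ is the coequalizer in $\Alg(\Omega,E)$ and uniqueness of the lift transfers from $\Alg\Omega$. Since a split coequalizer provides a section $s$ of $Uq$ in $\A$, the morphism $q$ is a $U$-retraction, and I will verify that $\orth{\Lambda}$ is stable under $U$-retractions. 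Indeed, given $u\colon\bF\repn{\tup{x}_i.\phi_i}_\bS\to\bC$, I transpose along $\bF\dashv U$ to $u^\flat$, form $s\circ u^\flat\colon\repn{\tup{x}_i.\phi_i}_\bS\to U\bB$, and transpose back to $(s u^\flat)^\sharp\colon\bF\repn{\tup{x}_i.\phi_i}_\bS\to\bB$; orthogonality of $\bB$ extends this uniquely along $\repn{\tup{x}_i}$ to some $w$, and $q w$ is the desired filler, since its transpose is $Uq\circ s\circ u^\flat=u^\flat$, with uniqueness forced by $\repn{\tup{x}_i}$ being epic. This is exactly where the freeness of the domains $\bF\repn{\tup{x}_i.\phi_i}_\bS$ and the epicness of the $\repn{\tup{x}_i}$ are used, and it is the only delicate point of the argument.

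With the hypotheses of Beck's theorem verified and the monad shown to be $\lambda$-ary, I conclude that $V$ is strictly $\lambda$-ary monadic over $\PMod\bS$. The main obstacle is precisely the closure of $\orth{\Lambda}$ under the $U$-split coequalizers (equivalently, $U$-retractions) created in $\Alg\Omega$; everything else is a formal assembly of the monadicity of $U$, the reflectivity of $\orth{\Lambda}$, and \cref{lem:orth_closed_under_filcolim}.
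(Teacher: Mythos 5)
Your proposal is correct, and its skeleton is the same as the paper's: apply Beck's strict monadicity theorem to $U\iota\colon\Alg(\Omega,E)\to\PMod\bS$, observe that the coequalizer map of a $U$-split pair is a $U$-retraction, and conclude by showing $\Alg(\Omega,E)$ is closed under $U$-retracts in $\Alg\Omega$. The genuine difference lies in how that closure (and the $\lambda$-ariness) is justified. The paper obtains closure under $U$-retracts (and under $\lambda$-filtered colimits) by citing \cref{thm:birkhoff_for_rat} as a black box, applied to $\Alg(\Omega,E)\subseteq\Alg(\Omega,\varnothing)=\Alg\Omega$; indeed the paper explicitly advertises Birkhoff's theorem as the crucial input in this section. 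You instead prove the closure directly from the orthogonality presentation $\Alg(\Omega,E)=\orth{\Lambda}$ of \cref{rem:alg_omegaE_orthogonal}: the transposition argument (transpose $u$ along $\bF\dashv U$, lift through the section of $Uq$, transpose back, use orthogonality of $\bB$ and epicness of $\repn{\tup{x}_i}$) is in effect the $U^\rho$-retract step extracted from the proof of \cref{thm:birkhoff_partialHorn}, specialized to this situation; similarly, you get closure under $\lambda$-filtered colimits from \cref{lem:orth_closed_under_filcolim} together with the fact that $\bF$ preserves $\lambda$-presentables. What each buys: the paper's proof is shorter given the machinery already in place, whereas yours is self-contained (modulo \cref{cor:alg_omega_monadic} and \cref{rem:alg_omegaE_orthogonal}), avoids the factorization-system and reflectivity apparatus behind Birkhoff's theorem, makes visible that only a small fragment of it (retract-closure of these orthogonality classes) is actually needed, and treats the $\lambda$-ariness of the induced monad more explicitly than the paper's one-line assertion that $U\iota$ preserves $\lambda$-filtered colimits.
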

\begin{proof}
    We follow the notation used in \cref{rem:alg_omegaE_orthogonal}.
    We have the following adjunctions:
    \begin{equation*}
        \begin{tikzcd}
            \A\arrow[r,"\bF",shift left=7pt]\arrow[r,"\perp"pos=0.5,phantom]
            &[10pt]\Alg\Omega\arrow[l,"U",shift left=7pt]\arrow[r,"r",shift left=7pt]\arrow[r,"\perp"pos=0.5,phantom]
            &[10pt]\Alg(\Omega,E)\arrow[l,shift left=7pt,hook',"\iota"]
        \end{tikzcd}
    \end{equation*}
    and $U\iota$ preserves $\lambda$-filtered colimits.
    To prove that $U\iota$ is monadic, we use Beck's strict monadicity theorem (\cite{maclane1998working} VI.7 Theorem 1).
    We claim that $\Alg(\Omega,E)\subseteq\Alg\Omega$ is closed under $U$-split coequalizers; hence $U\iota$ strictly creates $U$-split coequalizers.
    Indeed, given a $U$-split coequalizer:
    \begin{equation*}
    \begin{tikzcd}[column sep=large, row sep=large]
        \bA\arrow[r,shift left=4pt,"f"]\arrow[r,shift right=4pt,"g"']  &  \bB\arrow[r,"q"]  &  \bC
    \end{tikzcd}\incat{\Alg\Omega}
    \end{equation*}
    with $\bA,\bB\in\Alg(\Omega,E)$, $Uq$ is a retraction and thus $q$ is a $U$-retraction.
    By \cref{thm:birkhoff_for_rat}, $\Alg(\Omega,E)\subseteq\Alg\Omega$ is closed under $U$-retracts, which shows that $\bC\in\Alg(\Omega,E)$.
\end{proof}

\subsection{From monads to relative algebraic theories}
We now prove that every $\lambda$-ary monad on a locally $\lambda$-presentable category arises from a relative $\lambda$-ary algebraic theory.
This is a converse of the result in the previous subsection.
In this subsection, we fix an $S$-sorted $\lambda$-ary signature $\Sigma$ and a $\lambda$-ary partial Horn theory $\bS$ over $\Sigma$ again.

\begin{definition}
    Let $T$ be a $\lambda$-ary monad on $\PMod\bS$.
    We now define an $\bS$-relative $\lambda$-ary signature $\Omega_T$ for the $\lambda$-ary monad $T$:
    For each Horn formula $\tup{x}.\phi$ over $\Sigma$ and each sort $s\in S$, we have a set $(T\repn{\tup{x}.\phi}_\bS)_s$.
    We regard each element $\omega\in(T\repn{\tup{x}.\phi}_\bS)_s$ as an operator with $\ar(\omega)\coloneq\tup{x}.\phi$ and $\outsort(\omega)\coloneq s$, and define $\Omega_T$ to be the set of all such operators:
    \begin{equation*}
        \Omega_T\coloneq\coprod_{\substack{\text{Horn formula }\tup{x}.\phi\text{ over }\Sigma  \\  \text{sort }s\in S}} (T\repn{\tup{x}.\phi}_\bS)_s
    \end{equation*}
\end{definition}

\begin{definition}
    Let $T$ be a $\lambda$-ary monad on $\A\coloneq\PMod\bS$.
    We now define a natural transformation $\alpha_T\colon H_{\Omega_T}\Rightarrow T$, where $H_{\Omega_T}$ is the $\lambda$-ary endofunctor for $\Omega_T$ as in \cref{def:endofunc_for_signature}.
    In what follows, we regard each $\omega\in\Omega_T$ as a morphism $\repn{x\ofsort\outsort(\omega).\top}_\bS\to T\repn{\ar(\omega)}_\bS$ in $\A$ by \cref{prop:repn_obj_represents_intpn}.
    First, for each $A\in\A$ and $\omega\in\Omega_T$, we have the following map:
    \begin{equation*}
        \intpn{\ar(\omega)}{A} \cong \A(\repn{\ar(\omega)}_\bS, A) \ni f\quad\mapsto\quad (Tf)\circ\omega\in \A(\repn{x\ofsort\outsort(\omega).\top}_\bS, TA)
    \end{equation*}
    Then, the following bijective correspondence yields a natural transformation $\alpha\colon H_{\Omega_T}\Rightarrow T$:
    \begin{center}
        \renewcommand{\arraystretch}{1.5}
        \begin{tabular}{c}
            $\intpn{\ar(\omega)}{A}\longarr\A( \repn{x\ofsort \outsort(\omega).\top}_\bS , TA )\incat{\Set}\quad(\omega\in\Omega_T)$
            \\
            \hline\hline
            $\intpn{\ar(\omega)}{A}\copower\repn{x\ofsort \outsort(\omega).\top}_\bS \longarr TA\incat{\A}\quad(\omega\in\Omega_T)$
            \\
            \hline\hline
            $H_{\Omega_T}(A)\longarr[\alpha_{T,A}]TA\incat{\A}$
        \end{tabular}
    \end{center}
\end{definition}

\begin{lemma}\label{lem:alpha_is_dense}
    Let $T$ be a $\lambda$-ary monad on $\A\coloneq\PMod\bS$.
    Then, every component of the natural transformation $\alpha_T$ is $\Sigma$-dense and in particular epic.
\end{lemma}
\begin{proof}
    We have to show that $\alpha_{T,A}\colon H_{\Omega_T}(A)\to TA$ is $\Sigma$-dense for every $A\in\A$.
    We first show the case where $A$ is $\lambda$-presentable.
    By \cref{thm:repn_enumerates_presn}, it suffices to show the case $A=\repn{\tup{x}.\phi}_\bS$, where $\tup{x}.\phi$ is a Horn formula over $\Sigma$.
    Let $\omega\in (T\repn{\tup{x}.\phi}_\bS)_s$ be an arbitrary element.
    Then, $\omega$ is an operator with $\ar(\omega)=\tup{x}.\phi$ and $\outsort(\omega)=s$.
    Let
    \[
        \iota\colon\repn{x\ofsort s.\top}_\bS \to \coprod_{\omega'}\A(\repn{\ar(\omega')}_\bS , A)\copower\repn{x\ofsort \outsort(\omega').\top}_\bS \cong H_{\Omega_T}(A)
    \]
    be the coprojection for the pair of $\omega$ and $\id\in\A(\repn{\tup{x}.\phi}_\bS, A)=\A(\repn{\ar(\omega)}_\bS,A)$.
    Then, by definition of $\alpha_T$, the following diagram
    \begin{equation*}
        \begin{tikzcd}
            \repn{x\ofsort s.\top}_\bS\arrow[rd,"\omega"']\arrow[r,"\iota"] & H_{\Omega_T}(A)\arrow[d,"\alpha_{T,A}"] \\
            & TA
        \end{tikzcd}
    \end{equation*}
    commutes in $\A$.
    In particular, $\omega\in (TA)_s$ lies in the image of $\alpha_{T,A}$.
    By \cref{rem:explaination_closedmono_dense}\cref{rem:explaination_closedmono_dense-3}, we now concludes that $\alpha_{T,A}$ is $\Sigma$-dense for every $\lambda$-presentable object $A\in\A$.

    We now turn to the general case.
    Given $A\in\A$, we can take a $\lambda$-filtered colimit $A=\Colim{I\in\bI}A_I$ such that each $A_I$ is $\lambda$-presentable.
    Since $H_{\Omega_T}$ and $T$ are $\lambda$-ary, we have $\alpha_{T,A}=\Colim{I\in\bI}\alpha_{T,A_I}$ in the arrow category $\A^\to$.
    Since the class of all $\Sigma$-dense morphisms is a left class of an orthogonal factorization system on $\A$ by \cref{thm:dense_closedmono_factorization}, any colimit of $\Sigma$-dense morphisms is also $\Sigma$-dense.
    Thus, we concludes that $\alpha_{T,A}$ is $\Sigma$-dense.
\end{proof}

\begin{lemma}\label{lem:componentwize_epi_induce_fullyfaithful}
    Let $H$ and $K$ be endofunctors on a category $\C$ and let $\alpha\colon H\Rightarrow K$ be a natural transformation whose components are epimorphisms in $\C$.
    Let $\funcAlg\alpha\colon\Alg K\to\Alg H$ be the induced functor which assigns to each object $(X,x)$, $(X,x\circ\alpha_X)$.
    Then, the following hold:
    \begin{enumerate}
        \item
        $\funcAlg\alpha$ is fully faithful and injective on objects.
        \item
        The image of $\funcAlg\alpha$ is replete in $\Alg H$.
    \end{enumerate}
\end{lemma}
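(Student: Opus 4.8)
The plan is to record first that $\funcAlg\alpha$ is the identity on underlying arrows. A morphism $f\colon (X,x)\to (Y,y)$ in $\Alg K$ is an arrow $f\colon X\to Y$ in $\C$ satisfying $f\circ x=y\circ K(f)$; using naturality of $\alpha$ together with this compatibility, one computes $f\circ(x\circ\alpha_X)=y\circ K(f)\circ\alpha_X=(y\circ\alpha_Y)\circ H(f)$, so the very same $f$ is a morphism $\funcAlg\alpha(X,x)\to\funcAlg\alpha(Y,y)$ in $\Alg H$. Faithfulness is then immediate, since $\funcAlg\alpha$ neither changes the underlying object nor the underlying arrow.

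The next step is to exploit that each $\alpha_X$ is epic, which is where the hypothesis is genuinely used. For injectivity on objects, if $\funcAlg\alpha(X,x)=\funcAlg\alpha(X',x')$ then $X=X'$ and $x\circ\alpha_X=x'\circ\alpha_X$, whence $x=x'$ by cancelling the epimorphism $\alpha_X$. For fullness, given a morphism $g\colon\funcAlg\alpha(X,x)\to\funcAlg\alpha(Y,y)$ in $\Alg H$, i.e.\ $g\circ x\circ\alpha_X=y\circ\alpha_Y\circ H(g)$, I would rewrite the right-hand side via naturality of $\alpha$ as $y\circ K(g)\circ\alpha_X$ and cancel the epimorphism $\alpha_X$, obtaining $g\circ x=y\circ K(g)$; thus $g$ is already a morphism $(X,x)\to(Y,y)$ in $\Alg K$, and $\funcAlg\alpha$ sends it to $g$.

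For repleteness, suppose $(Z,z)\in\Alg H$ is isomorphic to $\funcAlg\alpha(X,x)=(X,x\circ\alpha_X)$ via an isomorphism $\phi\colon Z\xrightarrow{\sim}X$ in $\C$ with $\phi\circ z=x\circ\alpha_X\circ H(\phi)$. The idea is to transport the $K$-algebra structure along $\phi$, setting $w:=\phi^{-1}\circ x\circ K(\phi)\colon K(Z)\to Z$, so that $(Z,w)\in\Alg K$ and $\phi$ becomes an isomorphism $(Z,w)\to(X,x)$ in $\Alg K$. It then remains to verify $\funcAlg\alpha(Z,w)=(Z,z)$, that is, $w\circ\alpha_Z=z$: applying the naturality identity $K(\phi)\circ\alpha_Z=\alpha_X\circ H(\phi)$ gives $w\circ\alpha_Z=\phi^{-1}\circ x\circ\alpha_X\circ H(\phi)$, which equals $z$ by the displayed isomorphism condition. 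Hence $(Z,z)$ lies in the image of $\funcAlg\alpha$.

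None of these steps poses a serious obstacle; the only point worth flagging is the exact role of the hypothesis. The epimorphism assumption on the components of $\alpha$ is precisely what powers both injectivity on objects and fullness, whereas faithfulness is formal and repleteness rests instead on transport of structure plus naturality, so those two do not require it.
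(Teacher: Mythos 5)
Your proof is correct and follows essentially the same route as the paper: injectivity on objects and fullness both come from cancelling the epimorphism $\alpha_X$ after applying naturality, and repleteness comes from transporting the $K$-algebra structure along the isomorphism (you transport along $\phi^{-1}$ where the paper transports along $f$; the two are equivalent). Your added remarks — that faithfulness is formal and that repleteness needs no epimorphism hypothesis — are accurate refinements of the same argument.
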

\begin{proof}\quad
    \begin{enumerate}
        \item
        If $(X,x\circ\alpha_X)=(Y,y\circ\alpha_Y)$, then $X=Y$ and $x=y$ since $\alpha_X=\alpha_Y$ is an epimorphism.
        Thus, $\funcAlg\alpha$ is injective on objects.
        To prove fully faithfulness, let $(X,x)$ and $(Y,y)$ be objects in $\Alg K$ and let $f\colon (X,x\circ\alpha_X)\to (Y,y\circ\alpha_Y)$ be a morphism in $\Alg H$.
        Then, in the following diagram, both the outer rectangle and the upper square commute:
        \begin{equation*}
            \begin{tikzcd}
                HX\arrow[d,"\alpha_X"']\arrow[r,"Hf"] & HY\arrow[d,"\alpha_Y"] \\
                KX\arrow[d,"x"']\arrow[r,"Kf"] & KY\arrow[d,"y"] \\
                X\arrow[r,"f"] & Y
            \end{tikzcd}\incat{\C}
        \end{equation*}
        Since $\alpha_X$ is an epimorphism, the lower square of the above diagram also commutes, which finishes the proof.
        \item
        Let $(X,x)$ be an object in $\Alg K$ and let $f\colon (X,x\circ\alpha_X)\to (Y,y)$ be an isomorphism in $\Alg H$.
        Then, we can easily show that $y=f\circ x\circ (Kf^{-1})\circ\alpha_Y$.
        Thus, $(Y,y)$ lies in the image of $\funcAlg\alpha$.\qedhere
    \end{enumerate}
\end{proof}

\begin{lemma}\label{lem:nat_square_pushout}
    Let $F,G\colon\C\to\D$ be functors and let $\alpha\colon F\Rightarrow G$ be a natural transformation.
    Let $r\colon X\to Y$ be a retraction in $\C$ and assume that $\alpha_X$ is an epimorphism in $\D$.
    Then, the following diagram forms a pushout square:
    \begin{equation*}
        \begin{tikzcd}
            FX\arrow[d,"\alpha_X"']\arrow[r,"Fr"] & FY\arrow[d,"\alpha_Y"] \\
            GX\arrow[r,"Gr"] & GY
        \end{tikzcd}\incat{\D}
    \end{equation*}
\end{lemma}
\begin{proof}
    Let $f\colon FY\to Z$ and $g\colon GX\to Z$ be morphisms in $\D$ such that $f\circ Fr=g\circ\alpha_X$.
    We have to construct a unique morphism $h$ which makes the following commute:
    \begin{equation}\label{eq:canonical_morphism_h}
        \begin{tikzcd}
            FX\arrow[d,"\alpha_X"']\arrow[r,"Fr"] & FY\arrow[d,"\alpha_Y"']\arrow[rdd,"f"] & \\
            GX\arrow[r,"Gr"]\arrow[rrd,"g"'] & GY\arrow[rd,dashed,"h"description] & \\
            & & Z
        \end{tikzcd}\incat{\D}
    \end{equation}
    Since the uniqueness of $h$ follows trivially, we only need to construct $h$.
    Let $s\colon Y\to X$ be a section of $r$ and define $h\coloneq g\circ Gs$.
    Then, the following commutes:
    \begin{equation*}
        \begin{tikzcd}
            FY\arrow[dd,"\alpha_Y"']\arrow[rd,"Fs"']\arrow[rr,equal] & & FY\arrow[dd,"f"] \\
            & FX\arrow[d,"\alpha_X"]\arrow[ru,"Fr"'] & \\
            GY\arrow[rr,"h"',shift right=3]\arrow[r,"Gs"] & GX\arrow[r,"g"] & Z
        \end{tikzcd}\incat{\D}
    \end{equation*}
    Since $\alpha_X$ is an epimorphism, $h\circ Gr=g$ holds.
    Thus, the diagram \cref{eq:canonical_morphism_h} commutes.
\end{proof}

\begin{lemma}\label{lem:equifier_closed_under}
    Let $F,G\colon\C\to\D$ be functors and let $\alpha,\beta\colon F\Rightarrow G$ be natural transformations.
    Let $\E\subseteq\C$ be the equifier of $\alpha$ and $\beta$, i.e.,
    $\E\subseteq\C$ is a full subcategory of $\C$ defined by $\E\coloneq\{ X\in\C \mid \alpha_X=\beta_X \}$.
    Then, the following hold:
    \begin{enumerate}
        \item\label{lem:equifier_closed_under-1}
        Let $m\colon X\to Y$ be a morphism in $\C$ such that $Gm$ is a monomorphism in $\D$.
        Then, $Y\in\E$ implies $X\in\E$.
        \item\label{lem:equifier_closed_under-2}
        Let $p\colon X\to Y$ be a morphism in $\C$ such that $Fp$ is an epimorphism in $\D$.
        Then, $X\in\E$ implies $Y\in\E$.
    \end{enumerate}
\end{lemma}
\begin{proof}
    Let $m\colon X\to Y$ be a morphism in $\C$ such that $Gm$ is a monomorphism.
    Consider the following diagram:
    \begin{equation*}
        \begin{tikzcd}
            FX\arrow[d,"\alpha_X"',shift right=1]\arrow[d,"\beta_X",shift left=1]\arrow[r,"Fm"] & FY\arrow[d,"\alpha_{Y}"',shift right=1]\arrow[d,"\beta_{Y}",shift left=1] \\
            GX\arrow[r,"Gm"] & GY
        \end{tikzcd}\incat{\D}
    \end{equation*}
    If $\alpha_Y=\beta_Y$ holds, then $(Gm)\circ\alpha_X=(Gm)\circ\beta_X$ holds; hence $\alpha_X=\beta_X$.
    This proves \cref{lem:equifier_closed_under-1}.
    \cref{lem:equifier_closed_under-2} is dual to \cref{lem:equifier_closed_under-1}.
\end{proof}

\begin{theorem}\label{thm:from_monad_to_alg}
    Let $T$ be a $\lambda$-ary monad on $\A\coloneq\PMod\bS$.
    Then, there exist a set $E$ of $\bS$-relative judgments and an isomorphism $\A^T\cong\Alg(\Omega_T,E)$ which commutes with forgetful functors.
    \begin{equation*}
        \begin{tikzcd}
            \A^T\arrow[rd,"U^T"']\arrow[r,phantom,"\cong"] &[-10pt] \Alg(\Omega_T,E)\arrow[d,"\text{forgetful}"sloped]\arrow[r,phantom,"\subseteq"] &[-10pt] \Alg\Omega_T\arrow[ld,"U"] \\[15pt]
            & \A &
        \end{tikzcd}
    \end{equation*}
\end{theorem}
\begin{proof}
    By \cref{lem:componentwize_epi_induce_fullyfaithful,lem:alpha_is_dense}, the functor $\funcAlg\alpha_T\colon\Alg T\to\Alg H_{\Omega_T}$ is fully faithful and injective on objects and has a replete image.
    Since $\Alg H_{\Omega_T}\cong\Alg\Omega_T$ by \cref{lem:iso_alg_alg}, $\Alg T$ is isomorphic to a replete full subcategory of $\Alg\Omega_T$.
    The Eilenberg--Moore category $\A^T$ is now isomorphic to a replete full subcategory of $\Alg\Omega_T$ since it is a replete full subcategory of $\Alg T$.
    Note that the inclusion $\A^T\hookrightarrow\Alg\Omega_T$ commutes with forgetful functors; that is, the following commutes:
    \begin{equation*}
        \begin{tikzcd}
            \A^T\arrow[rrd,"U^T"']\arrow[r,phantom,"\subseteq"] &[-10pt] \Alg T\arrow[rd,"\text{forgetful}"sloped]\arrow[r,"\funcAlg\alpha_T",hook] &[-10pt] \Alg H_{\Omega_T}\arrow[d,"\text{forgetful}"sloped]\arrow[r,phantom,"\cong"] &[-10pt] \Alg\Omega_T\arrow[ld,"U"] \\[25pt]
            & & \A &
        \end{tikzcd}
    \end{equation*}
    To conclude that there exists a set $E$ of $\bS$-relative judgments such that $\A^T\cong\Alg(\Omega_T,E)$, we use Birkhoff's theorem for relative algebraic theories (\cref{thm:birkhoff_for_rat}).
    That is, it suffices to show that both $\A^T\subseteq\Alg T$ and $\funcAlg\alpha_T\colon\Alg T\hookrightarrow\Alg H_{\Omega_T}$ are closed under products, $\Sigma$-closed subobjects, $U$-retracts and $\lambda$-filtered colimits.

    Since $\A^T$, $\Alg T$, and $\Alg H_{\Omega_T}$ are $\lambda$-ary monadic over $\A$, their forgetful functors create products and $\lambda$-filtered colimits.
    Therefore, both inclusions $\A^T\subseteq\Alg T$ and $\funcAlg\alpha_T\colon\Alg T\hookrightarrow\Alg H_{\Omega_T}$ are closed under products and $\lambda$-filtered colimits.

    To show that $\funcAlg\alpha_T\colon\Alg T\hookrightarrow\Alg H_{\Omega_T}$ is closed under $\Sigma$-closed subobjects, let $(Y,y)$ be an object in $\Alg T$ and let $m\colon (X,x)\to (Y,y\circ\alpha_{T,Y})$ be a morphism in $\Alg H_{\Omega_T}$ such that $m\colon X\to Y$ is a $\Sigma$-closed monomorphism in $\A$.
    Then, the following commutes:
    \begin{equation*}
        \begin{tikzcd}
            H_{\Omega_T}X\arrow[dd,"\alpha_{T,X}"']\arrow[rd,"H_{\Omega_T}(m)"description]\arrow[rr,"x"] & & X\arrow[dd,"m"] \\
            & H_{\Omega_T}Y\arrow[d,"\alpha_{T,Y}"] & \\
            TX\arrow[r,"Tm"] & TY\arrow[r,"y"] & Y
        \end{tikzcd}\incat{\A}
    \end{equation*}
    Since $\alpha_{T,X}$ is $\Sigma$-dense by \cref{lem:alpha_is_dense} and $m$ is $\Sigma$-closed, the above rectangle has a unique diagonal filler by \cref{thm:dense_closedmono_factorization}.
    Thus, $(X,x)$ lies in the image of $\funcAlg\alpha_T$; hence the inclusion $\funcAlg\alpha_T\colon\Alg T\hookrightarrow\Alg H_{\Omega_T}$ is closed under $\Sigma$-closed subobjects.

    To show that $\funcAlg\alpha_T\colon\Alg T\hookrightarrow\Alg H_{\Omega_T}$ is closed under $U$-retracts, let $(X,x)$ be an object in $\Alg T$ and let $r\colon (X,x\circ\alpha_{T,X})\to (Y,y)$ be a morphism in $\Alg H_{\Omega_T}$ such that $r\colon X\to Y$ is a retraction in $\A$.
    By \cref{lem:nat_square_pushout}, the naturality square of $\alpha_T$ for $r$ forms a pushout.
    Thus, we have the following canonical morphism $y'\colon TY\to Y$:
    \begin{equation*}
        \begin{tikzcd}
            H_{\Omega_T}X\arrow[rd,pos=1.00,phantom,"\ulcorner"]\arrow[d,"\alpha_{T,X}"']\arrow[r,"H_{\Omega_T}(r)"] & H_{\Omega_T}Y\arrow[d,"\alpha_{T,Y}"'description]\arrow[rdd,"y"] & \\
            TX\arrow[d,"x"']\arrow[r,"Tr"'] & TY\arrow[rd,dashed,"y'"'pos=0.3] & \\
            X\arrow[rr,"r"'] & & Y
        \end{tikzcd}\incat{\A}
    \end{equation*}
    Therefore, $(Y,y)$ lies in the image of $\funcAlg\alpha_T$; hence $\funcAlg\alpha_T\colon\Alg T\hookrightarrow\Alg H_{\Omega_T}$ is closed under $U$-retracts.

    It remains to show that $\A^T\subseteq\Alg T$ is closed under $\Sigma$-closed subobjects and $U$-retracts.
    Let $V\colon\Alg T\to\A$ be the forgetful functor and let $\xi\colon TV\Rightarrow V$ be the natural transformation defined by $\xi_{(X,x)}\colon TX\arr[x]X$.
    Let $\eta$ and $\mu$ denote the unit and multiplication of the monad $T$.
    The Eilenberg--Moore category $\A^T$ is now the double equifier of a pair $\id\colon V\Rightarrow V$ and $\xi\circ\eta V\colon V\Rightarrow V$ and a pair $\xi\circ T\xi\colon TTV\Rightarrow V$ and $\xi\circ\mu V\colon TTV\Rightarrow V$, i.e., $\A^T$ is the full subcategory of $\Alg T$ defined by 
    \[
        \A^T\coloneq\{ (X,x)\in\Alg T \mid \id_{(X,x)}=(\xi\circ\eta V)_{(X,x)}\text{~and~} (\xi\circ T\xi)_{(X,x)}=(\xi\circ\mu V)_{(X,x)} \}.
    \]
    Note that every $\Sigma$-closed monomorphism in $\Alg T$ is transferred to a $\Sigma$-closed monomorphism by $V$ and that every $U$-retraction in $\Alg T$ is transferred to a retraction by both $TTV$ and $V$.
    Thus, \cref{lem:equifier_closed_under} now shows that $\A^T\subseteq\Alg T$ is closed under $\Sigma$-closed subobjects and $U$-retracts.
    This completes the proof.
\end{proof}

\begin{notation}
    Given a category $\C$, let us denote by $\Mndl(\C)$ the category of $\lambda$-ary monads on $\C$ and monad morphisms in the sense of \cite[3 Section 6]{barr2005ttt}.
\end{notation}

Combining \cref{thm:from_alg_to_monad,thm:from_monad_to_alg}, we obtain the characterization theorem for our relative algebraic theories:

\begin{theorem}\label{thm:equiv_between_monad_and_rat}
    Let $\bS$ be a $\lambda$-ary partial Horn theory over $\Sigma$.
    Then, there is an equivalence $\Thl^\bS\simeq\Mndl(\PMod\bS)$ which makes the following commute up to isomorphism:
    \begin{equation*}
        \begin{tikzcd}
            \Thl^\bS\arrow[rd,"\funcAlg"'name=Alg]\arrow[rr,"\simeq"] &[-20pt] &[-20pt] \Mndl(\PMod\bS)\arrow[ld,"\funcEM"] \\
            & (\CAT/\PMod\bS)^\op &
            \arrow[from=Alg,to=1-3,phantom,"\cong"sloped]
        \end{tikzcd}
    \end{equation*}
    Here, $\funcEM$ is the functor that assigns to each $\lambda$-ary monad, its Eilenberg--Moore category.
\end{theorem}
\begin{proof}
    We now define a functor $K\colon\Thl^\bS\to\Mndl(\PMod\bS)$.
    For each $\bS$-relative algebraic theory $(\Omega,E)$, let $K(\Omega,E)\coloneq\mnd{\Omega}{E}$ be the monad induced by the following adjunction:
    \begin{equation*}
    \begin{tikzcd}[column sep=large, row sep=large]
        \PMod\bS\arrow[r,"\bF",shift left=7pt]\arrow[r,"\perp"pos=0.5,phantom] &[10pt]\Alg(\Omega,E)\arrow[l,"U",shift left=7pt]
    \end{tikzcd}
    \end{equation*}
    By \cref{thm:from_alg_to_monad}, we have $\Alg(\Omega,E)\cong\EM(\mnd{\Omega}{E})$.
    Since the functor $\funcEM$ is fully faithful by \cite[3 Theorem 6.3]{barr2005ttt}, for each morphism $[\rho]\colon (\Omega,E)\to (\Omega',E')$ in $\Thl^\bS$, we have a unique monad morphism $K[\rho]:\mnd{\Omega}{E}\to\mnd{\Omega'}{E'}$ which makes the following commute:
    \begin{equation*}
        \begin{tikzcd}
            \EM(\mnd{\Omega}{E})\arrow[d,phantom,"\cong"sloped] & & \EM(\mnd{\Omega'}{E'})\arrow[ll,"\funcEM(K\lbrack\rho\rbrack)"']\arrow[d,phantom,"\cong"sloped] \\[-10pt]
            \Alg(\Omega,E)\arrow[rd,"U"'] & & \Alg(\Omega',E')\arrow[ll,"\funcAlg\rho"']\arrow[ld,"U'"] \\
            & \PMod\bS &
        \end{tikzcd}
    \end{equation*}
    This yields a desired functor $K\colon\Thl^\bS\to\Mndl(\PMod\bS)$.
    Then, $K$ is fully faithful by \cref{thm:alg_fully_faithful} and essentially surjective by \cref{thm:from_monad_to_alg}; hence $K$ is an equivalence.
\end{proof}

\cref{thm:equiv_between_monad_and_rat} answers the question of when a theory morphism between partial Horn theories induces a monadic functor.
Indeed, the theorem concludes that, for a theory morphism $\rho\colon\bS\to\bT$ between partial Horn theories, the functor $U^\rho$ becomes (strictly) monadic precisely when $\rho$ is isomorphic to some $\bS$-relative algebraic theory.
Although this kind of correspondence between theories and monads is folklore, to the author's knowledge, this is not known explicitly in other kinds of logic equivalent to partial Horn theory, such as essentially algebraic theories or cartesian theories.

As a result of \cref{thm:equiv_between_monad_and_rat}, we get a syntactic presentation of $\lambda$-ary monadic categories over locally $\lambda$-presentable categories:
\pagebreak
\begin{corollary}\label{cor:equiv_between_monadicity_and_rat}
    Let $\bS$ be a $\lambda$-ary partial Horn theory.
    For each category $\C$, the following are equivalent:
    \begin{enumerate}
        \item $\C$ is $\lambda$-ary monadic (resp. strictly $\lambda$-ary monadic) over $\PMod\bS$.
        \item $\C$ is equivalent (resp. isomorphic) to $\Alg(\Omega,E)$ for some $\bS$-relative $\lambda$-ary algebraic theory $(\Omega,E)$.
    \end{enumerate}
\end{corollary}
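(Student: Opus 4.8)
The plan is to assemble the corollary directly from the two halves already proved, \cref{thm:from_alg_to_monad} and \cref{thm:from_monad_to_alg}, handling the strict and non-strict cases in parallel through the ``resp.'' convention. Recall that $\C$ being \emph{$\lambda$-ary monadic} over $\PMod\bS$ means that there is a functor $U\colon\C\to\PMod\bS$ admitting a left adjoint whose induced monad $T$ is $\lambda$-ary and whose Eilenberg--Moore comparison functor $K\colon\C\to(\PMod\bS)^{T}$ is an equivalence (resp. an isomorphism, in the strict case). Thus the whole content lives at the object level, and the main equivalence \cref{thm:equiv_between_monad_and_rat} is not strictly needed; only the two object-level theorems are.

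For the direction (ii)$\implies$(i) I would start from an equivalence (resp. isomorphism) $\Phi\colon\C\to\Alg(\Omega,E)$. By \cref{thm:from_alg_to_monad} the forgetful functor $U_{(\Omega,E)}\colon\Alg(\Omega,E)\to\PMod\bS$ is strictly $\lambda$-ary monadic; in particular its induced monad is $\lambda$-ary and its comparison functor is an isomorphism. Taking $U:=U_{(\Omega,E)}\circ\Phi$, the composite inherits a left adjoint, induces the same monad up to isomorphism (and literally the same monad when $\Phi$ is an isomorphism), hence a $\lambda$-ary one, and its comparison functor is the comparison functor of $U_{(\Omega,E)}$ composed with $\Phi$. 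Since a composite of equivalences is an equivalence (resp. a composite of isomorphisms is an isomorphism), this exhibits $\C$ as $\lambda$-ary monadic (resp. strictly so).

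For (i)$\implies$(ii), given a $\lambda$-ary monadic $U\colon\C\to\PMod\bS$ with induced $\lambda$-ary monad $T$, the comparison functor exhibits $\C$ as equivalent (resp. isomorphic) to $(\PMod\bS)^{T}$. Now \cref{thm:from_monad_to_alg} supplies a set $E$ of $\bS$-relative judgments together with an isomorphism $(\PMod\bS)^{T}\cong\Alg(\Omega_T,E)$ commuting with the forgetful functors. Composing the comparison functor with this isomorphism yields the desired equivalence (resp. isomorphism) between $\C$ and $\Alg(\Omega_T,E)$.

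The only point requiring care is the bookkeeping of the ``resp.'' clause: that $\lambda$-ariness of the induced monad together with (strict) monadicity is stable under composing the comparison functor with an equivalence (resp. an isomorphism). I expect no genuine obstacle here, and it is precisely the \emph{strict} monadicity of $U_{(\Omega,E)}$ delivered by \cref{thm:from_alg_to_monad} that lets the isomorphism case go through without silently weakening to an equivalence.
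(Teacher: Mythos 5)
Your proposal is correct and follows essentially the same route as the paper, which states this corollary as an immediate consequence of the Section~5 machinery: \cref{thm:from_alg_to_monad} gives (ii)$\implies$(i) and \cref{thm:from_monad_to_alg} composed with the Eilenberg--Moore comparison gives (i)$\implies$(ii). Your observation that one should invoke these two object-level theorems directly rather than \cref{thm:equiv_between_monad_and_rat} is well taken, since the strict (``resp.'') clause relies precisely on the strict monadicity and the isomorphism commuting with forgetful functors that those theorems provide.
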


\begin{corollary}\label{cor:S-rat_is_independent_of_S}
    The concept of relative algebraic theories is independent of the choice of $\bS$, i.e.,
    if there exists an equivalence $\PMod\bS\simeq\PMod\bT$ with two $\lambda$-ary partial Horn theories $\bS$ and $\bT$, then the following classes of categories coincide (up to equivalence):
    \begin{enumerate}
        \item Categories of models of $\bS$-relative $\lambda$-ary algebraic theories.
        \item Categories of models of $\bT$-relative $\lambda$-ary algebraic theories.
    \end{enumerate}
\end{corollary}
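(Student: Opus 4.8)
The plan is to reduce everything to the monadic characterization of \cref{cor:equiv_between_monadicity_and_rat}, which asserts that, up to equivalence, the categories of models of $\bS$-relative $\lambda$-ary algebraic theories are exactly the categories that are $\lambda$-ary monadic over $\PMod\bS$, and symmetrically for $\bT$. It therefore suffices to prove that, given an equivalence $E\colon\PMod\bS\to\PMod\bT$, a category $\C$ is $\lambda$-ary monadic over $\PMod\bS$ if and only if it is $\lambda$-ary monadic over $\PMod\bT$; by symmetry only one direction is needed.

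First I would fix a monadic adjunction $F\dashv U\colon\C\to\PMod\bS$ whose induced monad $T=UF$ is $\lambda$-ary, together with a pseudo-inverse $E'\colon\PMod\bT\to\PMod\bS$ of $E$. The composite $EU\colon\C\to\PMod\bT$ then has left adjoint $FE'$, and the monad it induces is $ETE'$. Since $E$ and $E'$ are equivalences, they preserve all colimits, in particular $\lambda$-filtered ones, so $ETE'$ is again $\lambda$-ary. It remains to check that $EU$ is monadic.

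For monadicity I would invoke Beck's theorem (\cite{maclane1998working} VI.7 Theorem 1): the functor $EU$ has a left adjoint as above and reflects isomorphisms, since both $U$ and the equivalence $E$ do. A parallel pair in $\C$ is $EU$-split precisely when it is $U$-split, because $E$ is an equivalence; for such a pair $U$ (being monadic) preserves its coequalizer, and $E$ preserves every coequalizer. Hence $EU$ preserves coequalizers of $EU$-split pairs, so $EU$ is monadic, exhibiting $\C$ as $\lambda$-ary monadic over $\PMod\bT$. Applying \cref{cor:equiv_between_monadicity_and_rat} to both $\bS$ and $\bT$ then yields the coincidence of the two classes of categories.

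The argument is essentially formal, and I expect no genuine obstacle: the only point requiring a little care is the standard fact that the composite of a monadic functor with an equivalence is again monadic, together with the observation that $\lambda$-ariness of the induced monad is preserved under conjugation by equivalences. Once these are recorded, the corollary follows immediately from the monadic presentation established in \cref{cor:equiv_between_monadicity_and_rat}.
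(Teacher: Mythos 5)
Your proposal is correct and follows exactly the route the paper intends: the corollary is stated without proof as an immediate consequence of \cref{cor:equiv_between_monadicity_and_rat}, the only implicit ingredient being that $\lambda$-ary monadicity is invariant under composing with an equivalence of the base category. Your careful verification of that invariance (conjugating the monad by the equivalence and checking Beck's conditions for $EU$) is precisely the routine argument the paper suppresses, so there is nothing to add.
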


Thus, given a locally $\lambda$-presentable category $\A$, we may call an $\bS$-relative $\lambda$-ary algebraic theory an \emph{$\A$-relative $\lambda$-ary algebraic theory} as long as categories of models are concerned, taking an arbitrary $\lambda$-ary partial Horn theory $\bS$ such that $\A\simeq\PMod\bS$.

\section{Filtered colimit elimination from Birkhoff's variety theorem}\label{section:elimination}
Our last goal is to give a sufficient condition for eliminating closure under filtered colimits from Birkhoff's variety theorem.

\subsection{Pure quotients in locally presentable categories}
We recall the notion of \emph{pure epimorphisms} and basic properties of them.
Pure epimorphisms are a kind of epimorphic counterpart of pure monomorphisms and will play an important role in filtered colimit elimination.
Moreover, we will see that in the hierarchy of epimorphisms, pure epimorphisms lie between retractions and regular epimorphisms
in a locally presentable category just as pure monomorphisms lie between sections and regular monomorphisms.

\begin{definition}
    Let $\lambda$ be an infinite regular cardinal.
    A morphism $p\colon X\to Y$ in a category $\A$ is called a \emph{$\lambda$-pure epimorphism} if for every $\lambda$-presentable object $\Gamma\in\A$ and every morphism $f\colon\Gamma\to Y$, there exists a morphism $g\colon\Gamma\to X$ such that $p\circ g=f$.
\begin{equation*}
\begin{tikzcd}
    & X\arrow[d,"p"]\\
    \Gamma\arrow[ur,"\exists g",dashed]\arrow[r,"f"'] & Y
\end{tikzcd} 
\end{equation*}
Given a $\lambda$-pure epimorphism $p\colon X\to Y$, we say that $Y$ is a \emph{$\lambda$-pure quotient} of $X$.
\end{definition}

Here are some elementary properties of pure epimorphisms.
\begin{lemma}\label{lem:basic_property_locret}
    For every category $\A$, the following hold.
    \begin{enumerate}
        \item
        A composite $p\circ h$ is a $\lambda$-pure epimorphism whenever both $p$ and $h$ are $\lambda$-pure epimorphisms.
        \item
        If a composite $p\circ h$ is a $\lambda$-pure epimorphism, then so is $p$.
        \item\label{lem:basic_property_locret-2}
        Every retraction is a $\lambda$-pure epimorphism.
        \item\label{lem:basic_property_locret-3}
        If $p\colon X\to Y$ is a $\lambda$-pure epimorphism and $Y$ is $\lambda$-presentable, then $p$ is a retraction.
        \item\label{lem:basic_property_locret-4}
        $\lambda$-pure epimorphisms are stable under pullback, i.e., if a pullback square
        \begin{equation*}
            \begin{tikzcd}
                \cdot\arrow[r]\arrow[d,"p'"']\arrow[rd,pos=0.1,phantom,"\lrcorner"] &[-8pt] \cdot\arrow[d,"p"]\\[-5pt]
                \cdot\arrow[r] & \cdot
            \end{tikzcd}
        \end{equation*}
        is given and $p$ is a $\lambda$-pure epimorphism, then $p'$ is also a $\lambda$-pure epimorphism.
        \item\label{lem:basic_property_locret-5}
        Let $p=\Colim{I\in\bI}p_I$ be a $\lambda$-filtered colimit in the arrow category $\A^\to$ such that $p_I$ is a $\lambda$-pure epimorphism in $\A$ for all $I\in\bI$.
        Then $p$ is a $\lambda$-pure epimorphism in $\A$.
    \end{enumerate}
\end{lemma}
\begin{proof}
    The proof is immediate.
\end{proof}

\begin{proposition}[{\cite[Proposition 3]{adamek2004purequotient}}]\label{prop:locret_filtered_colim_of_retracts}
    Let $\A$ be a locally $\lambda$-presentable category.
    Then for a morphism $p\colon X\to Y$ in $\A$, the following are equivalent:
    \begin{enumerate}
        \item\label{prop:locret_filtered_colim_of_retracts-1}
        $p$ is a $\lambda$-pure epimorphism.
        \item\label{prop:locret_filtered_colim_of_retracts-2}
        $p$ is a $\lambda$-filtered colimit of retractions, i.e., $p$ can be written as a $\lambda$-filtered colimit $p=\Colim{I\in\bI}p_I$ in the arrow category $\A^\to$ such that every $p_I$ is a retract in $\A$.
    \end{enumerate}
\end{proposition}

\begin{proposition}[{\cite[Proposition 4]{adamek2004purequotient}}]\label{prop:locret_implies_reg.epi}
    In a locally $\lambda$-presentable category, every $\lambda$-pure epimorphism is a regular epimorphism.
\end{proposition}

\begin{example}\quad
    \begin{enumerate}
        \item
        In $\Set$, for any $\lambda$, a $\lambda$-pure epimorphism is simply a surjection.
        \item
        Let $\Pos$ be the category of partially ordered sets and monotone maps.
        Considering the minimal limit ordinal $\omega=\Colim{n<\omega}n\in\Pos$ as a colimit of finite ordinals, we have a canonical morphism $p\colon\coprod_{n<\omega}n\to\omega$ from the coproduct.
        This $p$ is an $\aleph_0$-pure epimorphism in $\Pos$.
        Indeed, since $\omega=\Colim{n<\omega}n$ is a filtered colimit, every morphism $f\colon X\to\omega$ from finitely presentable object $X$ factors through some finite ordinal $n$, i.e., $f$ can be written as $f\colon X\arr[f_n]n\to\omega$ for some $f_n$.
        Then $X\arr[f_n]n\to\coprod_{n<\omega}n$ gives a lift of $f$ along $p$.\qedhere
    \end{enumerate}
\end{example}

\subsection{The ascending chain condition for categories}
We study \emph{strongly connected components} in categories and introduce a noetherian-like condition for a category.
Later, we will show that such a condition is sufficient and nearly necessary to eliminate the closure property under filtered colimits from Birkhoff's theorem.
More on strongly connected components in categories is discussed in \cite{kawase2025filtered}.

\begin{definition}\quad
    \begin{enumerate}
        \item
        Objects $X$ and $Y$ in a category are \emph{strongly connected} if there exist morphisms $X\to Y$ and $Y\to X$.
        \item
        Strong connectedness is an equivalence relation on the class of all objects.
        An equivalent class under strong connectedness is called a \emph{strongly connected component}.\qedhere
    \end{enumerate}
\end{definition}

\begin{remark}
    Every (not necessarily small) poset can be considered as a category by regarding each order as a morphism, which yields a functor $\iota\colon\POS\to\CAT$.
    Here, $\POS$ is the category of large posets, and $\CAT$ is the category of large categories.
    The functor $\iota$ has a left adjoint $\sigma\colon\CAT\to\POS$, which is called the \emph{posetification}.
    The underlying class of the large poset $\sigma(\C)$ is the class of all strongly connected components in $\C$.
    There is an order $[X]\le [Y]$ in $\sigma(\C)$ if and only if there is a morphism $X\to Y$ in $\C$.
\end{remark}

\begin{definition}\label{def:ACC_for_cat}
    We say a category $\C$ satisfies the \emph{ascending chain condition} (ACC) if the poset $\sigma(\C)$ satisfies the ordinary ascending chain condition, i.e., $\sigma(\C)$ has no infinite strictly ascending sequence.
    Equivalently, a category $\C$ satisfies ACC if for every $\bbomega$-chain $X_0\to X_1\to\cdots$ in $\C$, there exists $N\in\bN$ such that $(X_n)_{n\ge N}$ are strongly connected to each other.
\end{definition}

\subsection{The filtered colimit elimination}
We now show that the ascending chain condition as in the previous subsection is sufficient to eliminate closure under filtered colimits from Birkhoff's theorem (\cref{thm:filcolim_elim}).
Furthermore, we get an HSP-type formalization of Birkhoff's theorem for partial Horn theories (\cref{cor:HSP-type_formalization}).

\begin{definition}
    Let $\lambda$ be an infinite regular cardinal and let $U\colon\A\to\C$ be a functor.
    A morphism $p$ in $\A$ is called a \emph{$(U,\lambda)$-pure epimorphism} if $Up$ is a $\lambda$-pure epimorphism in $\C$.
\end{definition}

\begin{theorem}[Birkhoff-type theorem for partial Horn theories II]\label{thm:filcolim_elim}
    Let $\rho\colon\bS\to\bT$ be a theory morphism between $\lambda$-ary partial Horn theories.
    Assume that $\PMod\bS$ satisfies the ascending chain condition.
    Then, for every replete full subcategory $\E\subseteq\PMod\bT$, the following are equivalent:
    \begin{enumerate}
        \item\label{thm:filcolim_elim-1}
        $\E$ is definable by $\rho$-relative $\lambda$-ary judgments, i.e., there exists a set $\bT'$ of $\rho$-relative $\lambda$-ary judgments satisfying $\E=\PMod (\bT+\bT')$.
        \item\label{thm:filcolim_elim-2}
        $\E\subseteq\PMod\bT$ is closed under products, $\bT$-closed subobjects, and $(U^\rho,\lambda)$-pure quotients.
        \item\label{thm:filcolim_elim-3}
        $\E\subseteq\PMod\bT$ is closed under products, $\bT$-closed subobjects, $U^\rho$-retracts, and $\lambda$-filtered colimits.
    \end{enumerate}
\end{theorem}
\begin{proof}
    The same proof works as \cite[Theorem 5.4]{kawase2025filtered}.
\end{proof}

The above theorem brings alternative versions of \cref{cor:cor_of_birkhoff_1,cor:cor_of_birkhoff_2,thm:birkhoff_for_rat}:
\begin{corollary}\label{cor:birkhoff_1}
    Let $(S,\Sigma,\bT)$ be a $\lambda$-ary partial Horn theory such that the set $S$ of sorts is finite.
    Then, for every replete full subcategory $\E\subseteq\PMod\bT$, the following are equivalent:
    \begin{enumerate}
        \item
        $\E$ is definable by $\lambda$-ary Horn formulas, i.e.,
        there exists a set $E$ of $\lambda$-ary Horn formulas over $\Sigma$ satisfying $\E=\PMod (\bT+\bT')$, where $\bT'\coloneq\{ \top\seq{\tup{x}}\phi\}_{\tup{x}.\phi\in E}.$
        \item
        $\E\subseteq\PMod\bT$ is closed under products, $\Sigma$-closed subobjects, and surjections.
    \end{enumerate}
\end{corollary}

\begin{corollary}\label{cor:birkhoff_2}
    Let $(S,\Sigma,\bT)$ be a $\lambda$-ary partial Horn theory such that $\PMod\bT$ satisfies ACC.
    Then, for every replete full subcategory $\E\subseteq\PMod\bT$, the following are equivalent:
    \begin{enumerate}
        \item
        $\E$ is definable by $\lambda$-ary Horn sequents, i.e.,
        there exists a set $\bT'$ of $\lambda$-ary Horn sequents over $\Sigma$ satisfying $\E=\PMod (\bT+\bT')$.
        \item
        $\E\subseteq\PMod\bT$ is closed under products, $\Sigma$-closed subobjects, and $\lambda$-pure quotients.
    \end{enumerate}
\end{corollary}

\begin{corollary}[Birkhoff-type theorem for relative algebraic theories II]\label{cor:birkhoff_3}
    Let $(S,\Sigma,\bS)$ be a $\lambda$-ary partial Horn theory such that $\PMod\bS$ satisfies ACC.
    Let $(\Omega,E)$ be an $\bS$-relative $\lambda$-ary algebraic theory with the forgetful functor $U\colon\Alg(\Omega,E)\to\PMod\bS$.
    Then, for every replete full subcategory $\E\subseteq\Alg(\Omega,E)$, the following are equivalent:
    \begin{enumerate}
        \item
        $\E$ is definable by $\bS$-relative $\lambda$-ary judgments, i.e.,
        there exists a set $E'$ of $\bS$-relative $\lambda$-ary judgments satisfying $\E=\Alg(\Omega,E+E')$.
        \item
        $\E\subseteq\Alg(\Omega,E)$ is closed under products, $\Sigma$-closed subobjects, and $(U,\lambda)$-pure quotients.
    \end{enumerate}
\end{corollary}

\begin{remark}
    \cref{cor:birkhoff_3} generalizes the $\Set$-enriched case of \cite[Proposition 6.11]{rosickytendas2026towards}.
\end{remark}

By filtered colimit elimination, we can reformulate Birkhoff's theorem for relative algebras in the HSP-type form (\cref{cor:HSP-type_formalization}).

\begin{notation}
    Let $\rho\colon \bS\to\bT$ be a $\lambda$-ary theory morphism between $\lambda$-ary partial Horn theories.
    For each replete full subcategory $\E\subseteq\PMod\bT$, let $\cloprod(\E)\subseteq\PMod\bT$ denote the full subcategory consisting of all products of objects in $\E$.
    Similarly, let $\clocsub{\bT}(\E)$ and $\clolret{\rho}{\lambda}(\E)$ denote the full subcategory consisting of all $\bT$-closed subobjects of objects in $\E$ and of all $(U^\rho,\lambda)$-pure quotients of objects in $\E$, respectively.
\end{notation}

\begin{corollary}\label{cor:HSP-type_formalization}
    Let $\rho\colon\bS\to\bT$ be a theory morphism between $\lambda$-ary partial Horn theories and assume that the category $\PMod\bS$ satisfies ACC.
    Then, a replete full subcategory $\E\subseteq\PMod\bT$ is definable by $\rho$-relative $\lambda$-ary judgments if and only if $\clolret{\rho}{\lambda}\clocsub{\bT}\cloprod(\E)=\E$ holds.
\end{corollary}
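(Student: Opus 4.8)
The plan is to read this off directly from the two immediately preceding results: the filtered-colimit-elimination form of Birkhoff's theorem (\cref{thm:filcolim_elim}) and the closure-operator computation of \cref{lem:HSP_operator}. The crucial link is the final clause of \cref{lem:HSP_operator}, which identifies $\clolret{\rho}{\lambda}\clocsub{\bT}\cloprod(\E)$ as the \emph{smallest} replete full subcategory containing $\E$ that is closed under products, $\bT$-closed subobjects, and $(U^\rho,\lambda)$-local retracts. Since the ACC hypothesis on $\PMod\bS$ is in force, the equivalence \ref{thm:filcolim_elim-1}$\Leftrightarrow$\ref{thm:filcolim_elim-2} of \cref{thm:filcolim_elim} tells us that definability of $\E$ by $\rho$-relative $\lambda$-ary judgments is \emph{exactly} the condition that $\E$ itself be closed under these same three operations. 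Thus the corollary collapses to the formal observation that a subcategory is closed under the three operations precisely when it coincides with the smallest such subcategory containing it.

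For the forward direction, I would first note that each of $\cloprod$, $\clocsub{\bT}$, $\clolret{\rho}{\lambda}$ is extensive and monotone (any object is a singleton product, an identity-$\bT$-closed subobject, and an identity-local-retract of itself), so that $\E\subseteq\cloprod(\E)\subseteq\clocsub{\bT}\cloprod(\E)\subseteq\clolret{\rho}{\lambda}\clocsub{\bT}\cloprod(\E)$ holds unconditionally. Now suppose $\E$ is definable by $\rho$-relative $\lambda$-ary judgments. By \cref{thm:filcolim_elim}, $\E$ is closed under products, $\bT$-closed subobjects, and $(U^\rho,\lambda)$-local retracts; hence $\E$ is itself a replete full subcategory containing $\E$ with all three closure properties. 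By the minimality statement in \cref{lem:HSP_operator}, this forces $\clolret{\rho}{\lambda}\clocsub{\bT}\cloprod(\E)\subseteq\E$, and combined with the extensivity inclusion above we conclude $\clolret{\rho}{\lambda}\clocsub{\bT}\cloprod(\E)=\E$.

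For the converse, I would assume $\clolret{\rho}{\lambda}\clocsub{\bT}\cloprod(\E)=\E$. Then $\E$ equals the smallest replete full subcategory containing $\E$ closed under products, $\bT$-closed subobjects, and $(U^\rho,\lambda)$-local retracts; in particular $\E$ enjoys all three closure properties. Applying \cref{thm:filcolim_elim} once more (the implication \ref{thm:filcolim_elim-2}$\Rightarrow$\ref{thm:filcolim_elim-1}, which uses ACC), we obtain that $\E$ is definable by $\rho$-relative $\lambda$-ary judgments, completing the proof. I do not anticipate any genuine obstacle here: all the analytic content — the elimination of closure under $\lambda$-filtered colimits and the interchange laws among the closure operators — has already been discharged in \cref{thm:filcolim_elim} and \cref{lem:HSP_operator}, so the only point requiring care is the bookkeeping of which closure properties are equivalent to definability, and the routine verification that the three operators are extensive.
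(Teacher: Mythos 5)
Your proposal is correct and follows exactly the route the paper intends: the paper derives \cref{cor:HSP-type_formalization} by combining the minimality clause of \cref{lem:HSP_operator} with the equivalence \ref{thm:filcolim_elim-1}$\Leftrightarrow$\ref{thm:filcolim_elim-2} of \cref{thm:filcolim_elim}, which is precisely your argument. The only content the paper leaves implicit --- extensivity of the three operators and the two-inclusion bookkeeping --- is what you spell out, so nothing is missing or different.
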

\begin{proof}
    This follows from a $\lambda$-ary version of \cite[Lemma 5.13]{kawase2025filtered}.
\end{proof}

\begin{remark}\label{rem:converse_fil_colim_elim}
    If limited to the finitary case, our sufficient condition for filtered colimit elimination is also nearly necessary.
    For more details, see \cite[Theorem 5.16]{kawase2025filtered}.
    However, the author does not have any weak converse result in the infinitary case.
    At least, the same proof as the finitary case does not work.
\end{remark}

For convenience, we summarize the relationship among the notion of ``quotients'' that appeared in this paper.
\begin{proposition}\label{prop:relation_between_quotients}
    Let $\rho\colon\bS\to\bT$ be a theory morphism between $\lambda$-ary partial Horn theories.
    Assume that $U^\rho$ is of descent type, i.e., the Eilenberg--Moore comparison functor of the adjunction $F^\rho\dashv U^\rho$ is fully faithful.
    Then, the following holds:
    \begin{enumerate}
        \item\label{prop:relation_between_quotients-1}
        $U^\rho$-retractions are regular epimorphisms in $\PMod\bT$.
        \item\label{prop:relation_between_quotients-2}
        $(U^\rho,\lambda)$-pure epimorphisms are composites of two regular epimorphisms in $\PMod\bT$.
    \end{enumerate}
\end{proposition}
\begin{proof}
    \cref{prop:relation_between_quotients-1} is one of equivalent conditions to being of descent type \cite[Theorem 2.4]{kelly1993adjunctions}.
    To show \cref{prop:relation_between_quotients-2}, take a $(U^\rho,\lambda)$-pure epimorphism $p\colon X\to A$ in $\PMod\bT$.
    Let $\pi,\pi'\colon Y\to X$ be a kernel pair of $p$, and let $q\colon X\to B$ be a coequalizer of them and $r\colon B\to A$ be a unique morphism such that $rq=p$.
    Since pure epimorphisms are regular epimorphisms (\cref{prop:locret_implies_reg.epi}), $U^\rho p$ is a regular epimorphism.
    Since $U^\rho$ preserves kernel pairs, $U^\rho p$ is a coequalizer of $U^\rho\pi$ and $U^\rho\pi'$.
    Then, we can take a unique morphism $s\colon U^\rho A\to U^\rho B$ such that $sU^\rho p=U^\rho q$.
    Then, the universal property of the coequalizer implies that $(U^\rho r)s=\id$, hence $U^\rho r$ is a retraction.
    Since $U^\rho$ is of descent type, $r$ becomes a regular epimorphism, which finishes the proof.
\end{proof}

\begin{remark}
    Let $\rho\colon\bS\to\bT$ be a theory morphism between $\lambda$-ary partial Horn theories.
    Assume that $U^\rho$ is of descent type, which includes in the case that $\rho$ is a ``relative algebraic theory.''
    Then, the following implications hold for morphisms in $\PMod\bT$:
    \begin{equation*}
        \begin{tikzcd}[small]
            \text{$U^\rho$-retraction}\ar[rr,Rightarrow,shorten=10]\ar[rd,Rightarrow] &[-10pt] &[-10pt] \text{regular epi}\ar[rd,Rightarrow] &[-10pt] \\
            & \text{$(U^\rho,\lambda)$-pure epimorphism}\ar[rr,Rightarrow,shorten=10] & & \text{composite of two regular epis}\ar[d,Rightarrow] \\[10pt]
            & \text{epi} & \text{$\bT$-dense}\ar[l,Rightarrow,shorten=10] & \text{strong epi}\ar[l,Rightarrow,shorten=10]
        \end{tikzcd}
    \end{equation*}
    The author do not know any implication between regular epi and $(U^\rho,\lambda)$-pure epimorphism.
    At least, regular epi does not imply $(U^\rho,\lambda)$-pure epimorphism.
    Indeed, a morphism $q$ considered in \cref{rem:dense_underlying} is a regular epimorphism but not a $U^\rho$-pure epimorphism.
\end{remark}

\subsection{Some applications of filtered colimit elimination}
\subsubsection{Finite-sorted algebras}
Considering a partial Horn theory $(S,\varnothing,\varnothing)$ such that $S$ is finite, 
we get Birkhoff's theorem in finite sorts \cite{adamek2012birkhoffs} as a consequence of \cref{cor:birkhoff_3}:
\begin{corollary}
    Let $(\Omega,E)$ be an $S$-sorted $\lambda$-ary algebraic theory.
    Then, for every replete full subcategory $\E\subseteq\Alg(\Omega,E)$, the following are equivalent:
    \begin{itemize}
        \item
        $\E\subseteq\Alg(\Omega,E)$ is definable by equations.
        \item
        $\E\subseteq\Alg(\Omega,E)$ is closed under products, subobjects, and surjections.
    \end{itemize}
    This subsumes the original version of Birkhoff's theorem \cite{birkhoff1935structure}.
\end{corollary}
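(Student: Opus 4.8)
The plan is to obtain the statement as the special case of \cref{cor:birkhoff_3} in which the base partial Horn theory is empty over a finite sort set. First I would take $\bS := (S,\varnothing,\varnothing)$ with $S$ finite. Since $\Sigma=\varnothing$ carries no function or relation symbols, a partial $\bS$-model is just an $S$-sorted set, so $\PMod\bS\cong\Set^S$, and an $S$-sorted $\lambda$-ary algebraic theory is precisely an $\bS$-relative $\lambda$-ary algebraic theory in the sense of \cref{def:relative_alg_theory} (each operator recording its input sorts via a context with trivial arity $\top$ together with an output sort). To apply \cref{cor:birkhoff_3} I must verify its hypothesis that $\PMod\bS$ satisfies ACC; but finiteness of $S$ gives $\sigma(\Set^S)\cong\pow(S)$, a finite poset, which satisfies ACC.

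The heart of the matter is translating the three closure conditions. Because $\Sigma=\varnothing$, the pullback conditions defining $\Sigma$-closedness are vacuous, so every monomorphism is $\Sigma$-closed; hence closure under $\Sigma$-closed subobjects is simply closure under subalgebras. For the quotient side, I would use that a $\lambda$-local retraction in $\Set^S$ is exactly a sortwise surjection (in $\Set$ this is immediate by lifting along the terminal object, and it passes sortwise), so that, for the forgetful $U\colon\Alg(\Omega,E)\to\Set^S$, the $(U,\lambda)$-local retractions are precisely the surjective homomorphisms. Thus closure under $(U,\lambda)$-local retracts becomes closure under homomorphic images. Finally, an $\bS$-relative judgment $\phi\seq{\tup{x}}\psi$ here has hypothesis $\phi$ a Horn formula over the empty signature---necessarily a conjunction of variable equalities---and conclusion $\psi$ a conjunction of $\Omega$-term equalities; discharging the variable identifications by substitution and splitting the conjunction shows that definability by $\bS$-relative judgments coincides with definability by ordinary equations $\top\seq{\tup{y}}s=t$.

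Feeding these three dictionaries into \cref{cor:birkhoff_3} yields exactly the claimed equivalence between definability by equations and closure under products, subobjects, and surjections; specializing further to $S=\{\sort\}$ and $\lambda=\aleph_0$ recovers Birkhoff's original HSP theorem. I do not anticipate a genuine obstacle, since the statement is a direct specialization of an already-proved theorem; the only care needed is in the elementary bookkeeping of the dictionary, and the most delicate point is the last reduction, where one checks that an empty-signature hypothesis can always be eliminated by substitution without altering the class of models it carves out.
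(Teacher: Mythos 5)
Your proposal is correct and is essentially the paper's own argument: the paper obtains this corollary in exactly the same way, by specializing \cref{cor:birkhoff_3} to the partial Horn theory $(S,\varnothing,\varnothing)$ with $S$ finite (so $\PMod\bS\cong\Set^S$ satisfies ACC since $\sigma(\Set^S)\cong\pow(S)$ is finite). The translation details you supply—vacuity of $\Sigma$-closedness, $(U,\lambda)$-local retractions being surjective homomorphisms, and $\bS$-relative judgments reducing to equations—are precisely the bookkeeping the paper leaves implicit, and they are carried out correctly.
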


\begin{remark}
    The assumption that $\PMod\bS$ satisfies ACC cannot be removed from \cref{thm:filcolim_elim} even though $\bS$ is finite-sorted.
    To show this, let $\bS$ be the single-sorted finitary partial Horn theory for sets with countably many constants as follows:
    \begin{gather*}
        \Sigma\coloneq\{ c_n\colon\text{constant} \}_{n\in\bN},\quad \bS\coloneq\{ \top\seq{()}c_n\defined \}_{n\in\bN}.
    \end{gather*}
    Consider the full subcategory of $\PMod\bS$
    \[
        \E\coloneq\{1\}\cup\{ M\in\PMod\bS \mid \exists i,j\text{~s.t.}\intpn{c_i}{M}\neq\intpn{c_j}{M} \},
    \]
    where $1$ denotes the terminal.
    An $\aleph_0$-pure epimorphism in $\PMod\bS$ is simply a surjection which does not merge any constants.
    Thus, we can see that $\E\subseteq\PMod\bS$ is closed under products, $\Sigma$-closed subobjects, and $\aleph_0$-pure quotients.
    We now show that it is not closed under filtered colimits.
    For each $n\in\bN$, define $A_n\coloneq\bN\cup\{\infty\}\in\PMod\bS$ by $\intpn{c_i}{A_n}\coloneq\max(i-n,0)$.
    Let $f_n\colon A_n\to A_{n+1}$ be the morphism
    \begin{equation*}
        f_n(x)\coloneq
        \begin{cases*}
            \max(x-1,0) & if $x\neq\infty$, \\
            \infty & if $x=\infty$.
        \end{cases*}
    \end{equation*}
    Then, the filtered colimit $A_\infty$ of $A_0\arr[f_0]A_1\arr[f_1]\cdots$ consists of two different points, and all constants are merged there; hence $A_\infty\not\in\E$ even though $A_n\in\E$.
\end{remark}

\subsubsection{Ordered algebras}
Considering the partial Horn theory $\bS_\pos$ for posets as in \cref{eg:pht_for_posets}, we get Birkhoff's theorem for ordered algebras as a consequence of \cref{cor:birkhoff_3}:
\begin{corollary}
    Let $(\Omega,E)$ be an $\bS_\pos$-relative $\lambda$-ary algebraic theory with the forgetful functor $U\colon\Alg(\Omega,E)\to\Pos$.
    Then, for every replete full subcategory $\E\subseteq\Alg(\Omega,E)$, the following are equivalent:
    \begin{itemize}
        \item
        $\E\subseteq\Alg(\Omega,E)$ is definable by $\bS_\pos$-relative $\lambda$-ary judgments.
        \item
        $\E\subseteq\Alg(\Omega,E)$ is closed under products, $\Sigma_\pos$-closed subobjects, and $(U,\lambda)$-pure quotients.
    \end{itemize}
\end{corollary}

Moreover, there is another version of Birkhoff's theorem for ordered algebras.
Given an $\bS_\pos$-relative algebraic theory $(\Omega,E)$, we have the associated finitary partial Horn theory $\pht{\Omega}{E}$ as in \cref{def:pht_for_rat}.
Applying \cref{cor:birkhoff_1} to $\pht{\Omega}{E}$, we get the following:
\begin{corollary}
    Let $(\Omega,E)$ be an $\bS_\pos$-relative $\lambda$-ary algebraic theory.
    Then, for every replete full subcategory $\E\subseteq\Alg(\Omega,E)$, the following are equivalent:
    \begin{itemize}
        \item
        $\E\subseteq\Alg(\Omega,E)$ is definable by inequalities (Horn formulas over $\Sigma_\pos+\Omega$).
        \item
        $\E\subseteq\Alg(\Omega,E)$ is closed under products, $\Sigma_\pos$-closed subobjects, and surjections.
    \end{itemize}
\end{corollary}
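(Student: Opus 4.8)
The plan is to deduce this corollary from \cref{cor:birkhoff_1} by presenting $\Alg(\Omega,E)$ as the category of models of a suitable single-sorted partial Horn theory. First I would invoke \cref{def:pht_for_rat} to form the associated $\lambda$-ary partial Horn theory $\pht{\Omega}{E}$ over the signature $\Sigma_\pos+\Omega$, together with the isomorphism $\Alg(\Omega,E)\cong\PMod\pht{\Omega}{E}$. The key structural observation is that $\bS_\pos$ is single-sorted, with sort set $S=\{\sort\}$, and passing to an $\bS_\pos$-relative signature $\Omega$ introduces no new sorts; hence $\pht{\Omega}{E}$ is a theory over the finite sort set $\{\sort\}$. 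This is exactly the hypothesis of \cref{cor:birkhoff_1}, so that result applies verbatim to $\bT:=\pht{\Omega}{E}$.

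It then remains to identify the two conditions of \cref{cor:birkhoff_1}, transported along $\Alg(\Omega,E)\cong\PMod\pht{\Omega}{E}$, with the two conditions in the statement. On the definability side, ``$\E$ is definable by $\lambda$-ary Horn formulas over $\Sigma_\pos+\Omega$'' is precisely what the statement names definability by inequalities, so no further work is needed there. On the closure side, closure under products and under surjections transports directly; in particular ``surjection'' matches because a $\lambda$-local retraction in $\Set$ is just a surjection, so that the $(U^\rho,\lambda)$-local retracts underlying \cref{cor:birkhoff_1} are exactly the surjective homomorphisms. The single point that demands an argument is that closure under $(\Sigma_\pos+\Omega)$-closed subobjects coincides with closure under $\Sigma_\pos$-closed subobjects.

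For this last point I would reuse the observation already recorded in the proof of \cref{thm:birkhoff_for_rat}: since the arity $\ar(\omega)$ of every operator $\omega\in\Omega$ is a Horn formula over $\Sigma_\pos$, the domain of the induced function symbol $\omega$ is determined by a $\Sigma_\pos$-formula. Consequently, once the pullback squares defining $\Sigma_\pos$-closedness hold for the relation symbol $\le$, the squares for the symbols coming from $\Omega$ hold automatically, because $\intpn{\ar(\omega)}{\bullet}$ is built from the interpretations of $\le$ by intersections and equalities. Hence a monomorphism is $(\Sigma_\pos+\Omega)$-closed if and only if it is $\Sigma_\pos$-closed, and the two closure conditions agree.

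I expect no genuine obstacle here: the whole content is this bookkeeping translation, and the mild subtleties worth double-checking are merely that the finite-sort hypothesis of \cref{cor:birkhoff_1} is really met (it is, as a one-element set is finite) and that the terminology ``inequalities'' is nothing more than Horn formulas over $\Sigma_\pos+\Omega$. The reduction of $(\Sigma_\pos+\Omega)$-closedness to $\Sigma_\pos$-closedness is the only step with mathematical substance, and it is already essentially established in \cref{thm:birkhoff_for_rat}.
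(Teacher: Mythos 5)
Your proposal is correct and follows essentially the same route as the paper: the paper likewise forms $\pht{\Omega}{E}$ via \cref{def:pht_for_rat}, applies \cref{cor:birkhoff_1} to this single-sorted (hence finite-sorted) theory over $\Sigma_\pos+\Omega$, and then identifies $\pht{\Omega}{E}$-closed subobjects with $\Sigma_\pos$-closed ones, which is exactly the observation you reuse from the proof of \cref{thm:birkhoff_for_rat}. There are no gaps; your extra remark about $\lambda$-local retractions in $\Set$ being surjections is not needed once \cref{cor:birkhoff_1} is invoked, since that corollary already states its closure condition in terms of surjections.
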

Note that a $\pht{\Omega}{E}$-closed subobject is simply an $\Sigma_\pos$-closed subobject, i.e., an embedding.
This corollary is a direct generalization of a classical Birkhoff theorem for ordered algebras in \cite{bloom1976varieties}.
This is because a morphism $f\colon\bA\to\bB$ between ordered algebras is surjective if and only if $f$ can be written as a quotient $\bA\to\bA/\theta$ of a congruence $\theta$ determined by an \emph{admissible preorder} on $\bA$ in the sense of \cite{bloom1976varieties}.

\subsubsection{Metric algebras}
We now apply our result to $\gMet$-relative algebras, which are called \emph{metric algebras} in \cite{weaver1995metric}.
Let $\bS_\met$ be the $\aleph_1$-ary partial Horn theories for generalized metric spaces as in \cref{eg:pht_for_met}.
Since the category $\gMet$ satisfies ACC, we get the following as a consequence of \cref{cor:birkhoff_1}:
\begin{corollary}
    Let $(\Omega,E)$ be an $\bS_\met$-relative $\lambda$-ary algebraic theory.
    Then, for every replete full subcategory $\E\subseteq\Alg(\Omega,E)$, the following are equivalent:
    \begin{itemize}
        \item
        $\E\subseteq\Alg(\Omega,E)$ is definable by \emph{atomic inequality} \cite{weaver1995metric} (Horn formulas over $\Sigma_\met+\Omega$).
        \item
        $\E\subseteq\Alg(\Omega,E)$ is closed under products, $\Sigma_\met$-closed subobjects, and surjections.
    \end{itemize}
    This subsumes Birkhoff's theorem for metric algebras as in \cite[Theorem 3.8]{hino2016varieties}.
\end{corollary}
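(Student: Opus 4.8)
The plan is to deduce the statement from \cref{cor:birkhoff_1} applied to the partial Horn theory associated with $(\Omega,E)$. First I would form $\pht{\Omega}{E}$ over the signature $\Sigma_\met+\Omega$ as in \cref{def:pht_for_rat}; that definition provides an isomorphism $\Alg(\Omega,E)\cong\PMod\pht{\Omega}{E}$ commuting with the forgetful functors, so it suffices to argue inside $\PMod\pht{\Omega}{E}$. The key observation unlocking \cref{cor:birkhoff_1} is that the relevant hypothesis there is finiteness of the sort set: since $\bS_\met$ is single-sorted ($S=\{\sort\}$ in \cref{eg:pht_for_met}) and adjoining the operators of $\Omega$ introduces no new sorts, the theory $\pht{\Omega}{E}$ is a $\lambda$-ary partial Horn theory over the singleton sort set $\{\sort\}$. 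Hence \cref{cor:birkhoff_1} applies directly (and, incidentally, $\gMet$ satisfies ACC).

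Next I would translate the two bullet points of \cref{cor:birkhoff_1} into metric language. On the closure side, I claim a $\pht{\Omega}{E}$-closed subobject is nothing but a $\Sigma_\met$-closed subobject: since the arity of every $\omega\in\Omega$ is a Horn formula over $\Sigma_\met$, the pullback square expressing closedness for a function symbol coming from $\Omega$ holds automatically once the closedness conditions for the symbols of $\Sigma_\met$ hold, exactly as in the proof of \cref{thm:birkhoff_for_rat}. Thus closure under $(\Sigma_\met+\Omega)$-closed subobjects coincides with closure under $\Sigma_\met$-closed subobjects, i.e.\ under the embeddings of \cite{weaver1995metric}. On the definability side, a $\lambda$-ary Horn formula over $\Sigma_\met+\Omega$ is, upon reading each atomic formula $R^a(\tau,\sigma)$ as the inequality $d(\tau,\sigma)\le a$, precisely a conjunction of inequalities between terms, which is what \cite{weaver1995metric} calls an atomic inequality. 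Feeding these two identifications into \cref{cor:birkhoff_1} yields the stated equivalence.

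Finally, to record the subsumption of \cite[Theorem 3.8]{hino2016varieties}, I would check, in parallel with the ordered-algebra discussion, that a morphism of $\bS_\met$-relative algebras is surjective exactly when it is a metric-algebra quotient in the sense of that reference, so that closure under surjections matches their closure under quotients. The reduction to \cref{cor:birkhoff_1} itself is immediate once the sort set is seen to be finite; the only genuinely delicate points are the two vocabulary matchings in the last two steps, namely verifying that $(\Sigma_\met+\Omega)$-Horn formulas are precisely Weaver's atomic inequalities and that surjective homomorphisms realize exactly the quotients appearing in \cite{hino2016varieties}. I expect the latter comparison to be the main obstacle, since it requires unwinding the concrete description of quotients of metric algebras used there.
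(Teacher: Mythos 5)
Your proposal is correct and is essentially the paper's own derivation: the paper obtains this corollary precisely by applying \cref{cor:birkhoff_1} to the single-sorted $\lambda$-ary partial Horn theory $\pht{\Omega}{E}$ over $\Sigma_\met+\Omega$, using the same identification of $(\Sigma_\met+\Omega)$-closed subobjects with $\Sigma_\met$-closed ones (as in the proof of \cref{thm:birkhoff_for_rat}) and reading ``atomic inequalities'' by definition as Horn formulas over $\Sigma_\met+\Omega$. Your remark that the operative hypothesis is finiteness of the sort set (rather than ACC of $\gMet$, which the paper's lead-in loosely invokes) is in fact more precise than the paper's wording, and the comparison with \cite[Theorem 3.8]{hino2016varieties} is likewise only asserted, not proved, in the paper, so nothing is missing from your argument.
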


\appendix
\section{Infinitary partial Horn logic}\label[appendix]{appendix:phl}
We introduce $\lambda$-ary partial Horn logic ($\PHL_\lambda$) for an infinite regular cardinal, which is a direct generalization of finitary ($\aleph_0$-ary) partial Horn logic \cite{palmgren2007partial}.

\subsection{Partial models}
\begin{definition}\label{def:inf_signature}
    Let $\lambda$ be an infinite regular cardinal and let $S$ be a set.
    An \emph{$S$-sorted $\lambda$-ary signature} $\Sigma$ consists of:
    \begin{itemize}
        \item
        a set $\Sigma_\mathrm{f}$ of function symbols,
        \item
        a set $\Sigma_\mathrm{r}$ of relation symbols
    \end{itemize}
    such that
    \begin{itemize}
        \item
        for each $f\in\Sigma_\mathrm{f}$, an arity (and a sort) $f\colon \sqcap_{i<\alpha}s_i\to s$ ($\alpha<\lambda; s_i,s\in S$) is given;
        \item
        for each $R\in\Sigma_\mathrm{r}$, an arity $R\colon \sqcap_{i<\alpha}s_i$ ($\alpha<\lambda; s_i\in S$) is given.\qedhere
    \end{itemize}
\end{definition}

\begin{notation}
    If $\alpha=0$, we write $()$ instead of $\sqcap_{i<0}s_i$.
    If $\alpha=n\ge 1$, we also write
    \[
        s_0\sqcap s_1\sqcap\dots\sqcap s_{n-1}
    \]
    instead of $\sqcap_{i<n}s_i$.
\end{notation}

Given the set $S$ of sorts, we fix an $S$-sorted set $\mathrm{Var}=(\mathrm{Var}_s)_{s\in S}$ such that $\mathrm{Var}_s$ has cardinal $\lambda$ for each $s\in S$.
We assume $\mathrm{Var}_s\cap\mathrm{Var}_{s'}=\varnothing$ if $s\neq s'$.
An element $x\in\mathrm{Var}_s$ is called a \emph{variable of sort $s$}.
The notation $x\ofsort s$ means that $x$ is a variable of sort $s$.

\begin{definition}
    Let $\Sigma$ be an $S$-sorted $\lambda$-ary signature.
    \begin{enumerate}
        \item
        \emph{Raw terms} (over $\Sigma$) and their \emph{sorts} are defined by the following inductive rules:
        \begin{itemize}
            \item
            Given a variable $x$ of sort $s\in S$, $x$ is a raw term of sort $s$;
            \item
            Given a function symbol $f\in\Sigma$ with arity $\sqcap_{i<\alpha}s_i\to s$ and raw terms $\tau_i$ $(i<\alpha)$ where $\tau_i$ is of sort $s_i$, 
            then the expression $f(\tau_i)_{i<\alpha}$ is a raw term of sort $s$.
        \end{itemize}
        \item
        \emph{Raw ($\lambda$-ary) Horn formulas} (over $\Sigma$) are defined by the following inductive rules:
        \begin{itemize}
            \item
            Given a relation symbol $R\in\Sigma$ with arity $\sqcap_{i<\alpha}s_i$ and raw terms $\tau_i$ $(i<\alpha)$ where $\tau_i$ is of sort $s_i$, 
            then the expression $R(\tau_i)_{i<\alpha}$ is a raw $\lambda$-ary Horn formula;
            \item
            Given two raw terms $\tau$ and $\tau'$ of the same sort $s$,
            then $\tau=\tau'$ is a raw $\lambda$-ary Horn formula;
            \item
            The truth constant $\top$ is a raw $\lambda$-ary Horn formula;
            \item
            Given raw $\lambda$-ary Horn formulas $\phi_i$ $(i<\alpha)$ with $\alpha<\lambda$, the expression $\bigwedge_{i<\alpha}\phi_i$ is a raw $\lambda$-ary Horn formula.
            When $\alpha=0$, $\bigwedge_{i<\alpha}\phi_i$ expresses $\top$.
        \end{itemize}
        \item
        A \emph{($\lambda$-ary) context} is a tuple $\tup{x}=(x_i\ofsort s_i)_{i<\alpha}$ of distinct variables with $\alpha<\lambda$.
        \item
        A \emph{($\lambda$-ary) term} (over $\Sigma$) is a pair of a $\lambda$-ary context $\tup{x}$ and a raw term $\tau$ (over $\Sigma$), written as $\tup{x}.\tau$, where all variables appearing in $\tau$ occur in $\tup{x}$.
        The \emph{sort} of $\tup{x}.\tau$ is defined as that of $\tau$.
        \item
        A \emph{($\lambda$-ary) Horn formula} (over $\Sigma$) is a pair of a $\lambda$-ary context $\tup{x}$ and a raw $\lambda$-ary Horn formula $\phi$ (over $\Sigma$), written as $\tup{x}.\phi$, where all variables appearing in $\phi$ occur in $\tup{x}$.
        \item
        A \emph{($\lambda$-ary) Horn sequent} (over $\Sigma$) is a pair of two $\lambda$-ary Horn formulas $\tup{x}.\phi$ and $\tup{x}.\psi$ (over $\Sigma$) with the same context, written as
        \begin{equation*}
            \phi \seq{\tup{x}} \psi.
        \end{equation*}
        \item
        A \emph{$\lambda$-ary partial Horn theory} $\bT$ (over $\Sigma$) is a set of $\lambda$-ary Horn sequents (over $\Sigma$).\qedhere
    \end{enumerate}
\end{definition}

\begin{remark}
    Our terminologies raw terms and raw Horn formulas are simply called ``terms'' and ``Horn formulas'' in usual;
    our terminologies terms and Horn formulas are ordinarily called terms-in-context and Horn formulas-in-context \cite{johnstone2002sketches,palmgren2007partial}.
    Since we do not deal with terms and formulas with no context, the author believes our terminologies are convenient.
\end{remark}

Note that we do not consider the equal sign ``$=$'' to be a relation symbol.
We informally use the abbreviation $\phi\biseq{\tup{x}}\psi$ for ``$(\phi\seq{\tup{x}}\psi)$ and $(\psi\seq{\tup{x}}\phi)$,'' and $\tau\defined$ for $\tau=\tau$.

\begin{definition}
    Let $\Sigma$ be an $S$-sorted $\lambda$-ary signature.
    A \emph{partial $\Sigma$-structure} $M$ consists of:
    \begin{itemize}
        \item
        a set $M_s$ for each sort $s\in S$,
        \item
        a partial map
        \[
        \intpn{f}{M}~(\text{or}~\intpn{\tup{x}.f(\tup{x})}{M})\colon \prod_{i<\alpha}M_{s_i}\pto M_s
        \]
        for each function symbol $f\colon \sqcap_{i<\alpha}s_i\to s$ in $\Sigma$,
        \item
        a subset $\intpn{R}{M}$ (or $\intpn{\tup{x}.R(\tup{x})}{M}$) $\subseteq \prod_{i<\alpha}M_{s_i}$ for each relation symbol $R\colon \sqcap_{i<\alpha}s_i$ in $\Sigma$.\qedhere
    \end{itemize}
\end{definition}

We can extend the above definitions of $\intpn{\tup{x}.f(\tup{x})}{M}$ and $\intpn{\tup{x}.R(\tup{x})}{M}$ to arbitrary terms and Horn formulas:
\begin{definition}
    Let $\Sigma$ be an $S$-sorted $\lambda$-ary signature and let $M$ be a partial $\Sigma$-structure.
    Fix a context $\tup{x}=(x_i\ofsort s_i)_{i<\alpha}$.
    \begin{enumerate}
        \item
        For an arbitrary $\lambda$-ary term $\tup{x}.\tau$ of sort $s$ over $\Sigma$, we define a partial map
        \begin{equation*}
            \intpn{\tup{x}.\tau}{M}\colon \prod_{i<\alpha}M_{s_i}\pto M_s
        \end{equation*}
        as follows:
        \begin{itemize}
            \item
            For each $i<\alpha$, $\intpn{\tup{x}.x_i}{M}\colon\prod_{i<\alpha}M_{s_i}\to M_{s_i}$ is the $i$-th projection;
            \item
            For a function symbol $f\colon\sqcap_{j<\beta}s_j\to s$ in $\Sigma$ and terms $\tau_j$ of sort $s_j$, $\intpn{\tup{x}.f(\tau_j)_{j<\beta}}{M}(\tup{m})$ is defined if and only if all $\intpn{\tup{x}.\tau_j}{M}(\tup{m})$ are defined and $\intpn{f}{M}(\intpn{\tup{x}.\tau_j}{M}(\tup{m}))_{j<\beta}$ is also defined,
            and then $\intpn{\tup{x}.f(\tau_j)_{j<\beta}}{M}(\tup{m})\coloneq\intpn{f}{M}(\intpn{\tup{x}.\tau_j}{M}(\tup{m}))_{j<\beta}.$ 
        \end{itemize}
        \item
        For an arbitrary $\lambda$-ary Horn formula $\tup{x}.\phi$ over $\Sigma$, we define a subset
        \begin{equation*}
            \intpn{\tup{x}.\phi}{M}\subseteq\prod_{i<\alpha}M_{s_i}
        \end{equation*}
        as follows:
        \begin{itemize}
            \item
            For a relation symbol $R\colon\sqcap_{j<\beta}s_j$ in $\Sigma$ and terms $\tup{x}.\tau_j$ of sort $s_j$,
            $\tup{m}$ belongs to $\intpn{\tup{x}.R(\tau_j)_{j<\beta}}{M}$ if and only if all $\intpn{\tup{x}.\tau_j}{M}(\tup{m})$ are defined and $(\intpn{\tup{x}.\tau_j}{M}(\tup{m}))_{j<\beta}$ belongs to $\intpn{R}{M}$;
            \item
            For two terms $\tup{x}.\tau$ and $\tup{x}.\tau'$ of the same sort,
            $\tup{m}$ belongs to $\intpn{\tup{x}.\tau=\tau'}{M}$ if and only if both $\intpn{\tup{x}.\tau}{M}(\tup{m})$ and $\intpn{\tup{x}.\tau'}{M}(\tup{m})$ are defined and equal to each other;
            \item
            $\intpn{\tup{x}.\top}{M}\coloneq\prod_{i<\alpha} M_{s_i}$;
            \item
            For Horn formulas $(\tup{x}.\phi_j)_{j<\beta}$,
            $\intpn{\tup{x}.\bigwedge_{j<\beta}\phi_j}{M}\coloneq\bigcap_{j<\beta}\intpn{\tup{x}.\phi_j}{M}$.\qedhere
        \end{itemize}
    \end{enumerate}
\end{definition}

\begin{definition}
    We say that a Horn sequent $\phi\seq{\tup{x}}\psi$ over $\Sigma$ is \emph{valid} in a partial $\Sigma$-structure $M$ and write
    \[
        M \vDash (\phi\seq{\tup{x}}\psi)
    \]
    if $\intpn{\tup{x}.\phi}{M}\subseteq\intpn{\tup{x}.\psi}{M}$.
    A partial $\Sigma$-structure $M$ is called a \emph{partial $\bT$-model} for a $\lambda$-ary partial Horn theory $\bT$ over $\Sigma$ if all Horn sequents in $\bT$ are valid in $M$.
\end{definition}

\begin{definition}
    Let $\Sigma$ be an $S$-sorted $\lambda$-ary signature.
    A \emph{$\Sigma$-homomorphism} $h\colon M\to N$ between partial $\Sigma$-structures consists of:
    \begin{itemize}
        \item a total map $h_s\colon M_s\to N_s$ for each sort $s\in S$
    \end{itemize}
    such that for each function symbol $f\colon \sqcap_{i<\alpha}s_i\to s$ in $\Sigma$ and relation symbol $R\colon \sqcap_{j<\beta}s_j$ in $\Sigma$, there exist (necessarily unique) total maps (denoted by dashed arrows) making the following diagrams commute:
    \[
    \begin{tikzcd}[large]
        \prod_{i<\alpha}M_{s_i}\arrow[d,"\prod_{i<\alpha}h_{s_i}"'] &[-10pt] \mathrm{Dom}(\intpn{f}{M})\arrow[d,"\exists"',dashed]\arrow[l,hook']\arrow[r,"\intpn{f}{M}"] &[20pt] M_s\arrow[d,"h_s"] \\
        \prod_{i<\alpha}N_{s_i} & \mathrm{Dom}(\intpn{f}{N})\arrow[l,hook']\arrow[r,"\intpn{f}{N}"'] & N_s
    \end{tikzcd}
    \]
    \[
    \begin{tikzcd}[large]
        \prod_{j<\beta}M_{s_j}\arrow[d,"{\prod_{j<\beta}h_{s_j}}"'] &[-10pt] \intpn{R}{M}\arrow[d,"\exists"',dashed]\arrow[l,hook'] \\
        \prod_{j<\beta}N_{s_j} & {\intpn{R}{N}} \arrow[l,hook']
    \end{tikzcd}
    \]
\end{definition}

\begin{notation}
    Let $\bT$ be a $\lambda$-ary partial Horn theory over an $S$-sorted $\lambda$-ary signature $\Sigma$.
    We will denote by $\PStr\Sigma$ the category of partial $\Sigma$-structures and $\Sigma$-homomorphisms and by $\PMod\bT$ the full subcategory of $\PStr\Sigma$ consisting of all partial $\bT$-models.
\end{notation}

\begin{example}
    To explain what a partial structure is, we begin with an artificial example.
    Let $S\coloneq\bN$, the set of all natural numbers.
    The $S$-sorted finitary signature $\Sigma$ consists of:
    \begin{equation*}
        P\colon (),\quad
        c\colon ()\to 0.
    \end{equation*}
    Then, a partial $\Sigma$-structure $M$ consists of an $\bN$-sorted set $(M_n)_{n\in\bN}$ and a subset $\intpn{P}{M}\subseteq 1$, where $1$ denotes the singleton, and may have a constant $\intpn{c}{M}\in M_0$.
    That is, the nullary relation symbol $P$ is interpreted as a ``proposition,'' and the nullary function symbol $c$ is interpreted as a ``partial constant.''

    We next explain what a $\Sigma$-homomorphism is.
    A $\Sigma$-homomorphism $h\colon M\to N$ exists if and only if the following conditions hold:
    \begin{itemize}
        \item
        $\intpn{P}{M}\subseteq\intpn{P}{N}$;
        \item
        If $\intpn{c}{M}\in M_0$ exists, then $\intpn{c}{N}\in N_0$ also exists.
    \end{itemize}
    Then, $h$ is a family of maps $(M_n\arr[h_n]N_n)_{n\in\bN}$ such that $h_0(\intpn{c}{M})=\intpn{c}{N}$ holds whenever $\intpn{c}{M}$ exists.
\end{example}

\begin{example}[Posets]\label{eg:pht_for_posets}
    We present a finitary partial Horn theory $\bS_\pos$ for posets.
    Let $S\coloneq\{\sort\}$, $\Sigma_\pos\coloneq$\mbox{$\{ \le\colon \sort\sqcap\sort \}$}.
    The finitary partial Horn theory $\bS_\pos$ over $\Sigma_\pos$ consists of:
    \begin{gather*}
        \top\seq{x}x\le x,\quad
        x\le y\wedge y\le x\seq{x,y}x=y,\quad
        x\le y\wedge y\le z\seq{x,y,z}x\le z.
    \end{gather*}
Then, we have $\PMod\bS_\pos\cong\Pos$, where $\Pos$ denotes the category of partially ordered sets and monotone maps.
\end{example}

\begin{example}[Small categories]\label{eg:pht_for_smallcat}
    We present a finitary partial Horn theory $\bS_\cat$ for small categories.
    Let $S\coloneq\{ \ob, \mor\}$.
    The $S$-sorted finitary signature $\Sigma_\cat$ consists of:
    \begin{gather*}
        \id\colon\ob\to\mor,\quad
        \mathrm{d}\colon\mor\to\ob,\quad
        \mathrm{c}\colon\mor\to\ob,\quad
        \circ\colon\mor\times\mor\to\mor.
    \end{gather*}
    The finitary partial Horn theory $\bS_\cat$ over $\Sigma_\cat$ consists of:
    \begin{gather*}
        \top\seq{x\ofsort \ob}\id(x)\defined,\quad
        \top\seq{f\ofsort\mor}\mathrm{d}(f)\defined\wedge\mathrm{c}(f)\defined,\quad
        \mathrm{d}(g)=\mathrm{c}(f)\biseq{g{,}f\ofsort\mor}(g\circ f)\defined;\\
        \top\seq{x\ofsort\ob}\mathrm{d}(\id(x))=x\wedge\mathrm{c}(\id(x))=x;\\
        \mathrm{d}(g)=\mathrm{c}(f)\seq{g{,}f\ofsort\mor}\mathrm{d}(g\circ f)=\mathrm{d}(f)\wedge\mathrm{c}(g\circ f)=\mathrm{c}(g);\\
        \top\seq{f\ofsort\mor}f\circ\id(\mathrm{d}(f))=f\wedge \id(\mathrm{c}(f))\circ f=f;\\
        \mathrm{d}(h)=\mathrm{c}(g)\wedge\mathrm{d}(g)=\mathrm{c}(f)\seq{h{,}g{,}f\ofsort\mor}(h\circ g)\circ f=h\circ (g\circ f).
    \end{gather*}
    Then, we have $\PMod\bS_\cat\simeq\Cat$.
\end{example}

\begin{example}[Generalized metric spaces]\label{eg:pht_for_met}
    We present an $\aleph_1$-ary partial Horn theory $\bS_\met$.
    Let $S\coloneq\{\sort\}$, $\Sigma_\met\coloneq\{ R^a\colon \sort\sqcap\sort \}_{a\in [0,\infty)}$.
    Here $[0,\infty)$ denotes the set of all non-negative real numbers.
    The $\aleph_1$-ary partial Horn theory $\bS_\met$ consists of:
    \begin{gather*}
        R^a(x,y)\seq{x,y}R^b(x,y)\qquad(\text{for any }a\le b);\\
        \bigwedge_{n<\omega}R^{a_n}(x,y)\seq{x,y}R^{\inf_n a_n}(x,y)\qquad(\text{for any sequent } (a_n)_{n<\omega});\\
        \top\seq{x}R^a(x,x)\qquad(\text{for any }a\in\bR_\ge);\\
        R^a(x,y)\seq{x,y}R^a(y,x);\\
        R^a(x,y)\wedge R^b(y,z)\seq{x,y,z}R^{a+b}(x,z);\\
        R^0(x,y)\seq{x,y} x=y.
    \end{gather*}
    Regarding ``$R^a(x,y)$'' as ``$d(x,y)\le a$'', we observe that a partial $\bS_\met$-model is precisely a \emph{generalized metric space}, i.e., a set $X$ with a map $d\colon X\to [0,\infty]$ satisfying appropriate axioms (see \cite[Examples 4.5(3)]{lieberman2017metric}).
    Furthermore, we have $\PMod\bS_\met\cong \gMet$, where $\gMet$ denotes the category of generalized metric spaces and contractions.
\end{example}

\begin{example}[$[0,\infty)$-fuzzy sets]\label{eg:pht_for_fuzzy}
    A set with a map from itself to $[0,\infty)$ is called a \emph{$[0,\infty)$-fuzzy set}.
    For two fuzzy sets $v\colon X\to [0,\infty)$ and $w\colon Y\to [0,\infty)$, a morphism $(X,v)\to (Y,w)$ is a map $f\colon X\to Y$ such that $v(x)\ge wf(x)$ for all $x\in X$, which yields a category $\Fuz$.
    
    We now present an $\aleph_1$-ary partial Horn theory for $[0,\infty)$-fuzzy sets.
    The $[0,\infty)$-sorted $\aleph_1$-ary signature $\Sigma_\fuz$ consists of:
    \begin{itemize}
        \item
        for each pair of $r,r'\in [0,\infty)$ such that $r\le r'$, a function symbol
        \begin{equation*}
            \iota_r^{r'}\colon r\to r';
        \end{equation*}
        \item
        for each decreasing sequence $\tup{r}=(r_n)_{n<\omega}$ in $[0,\infty)$, a function symbol
        \begin{equation*}
            \delta_{\tup{r}}\colon \sqcap_{n<\omega}r_n \to \lim_{n\to\infty}r_n.
        \end{equation*}
    \end{itemize}
    The $\aleph_1$-ary partial Horn theory $\bS_\fuz$ over $\Sigma_\fuz$ consists of the following Horn sequents:
    \begin{align}
        \label{eq:fuz_incl_total}
        \top&\seq{x\ofsort r_0}\iota_{r_0}^{r_1}(x)\defined;\\
        \label{eq:fuz_incl_monic}
        \iota_{r_0}^{r_1}(x)=\iota_{r_0}^{r_1}(y)&\seq{x{,}y\ofsort r_0}x=y;\\
        \label{eq:fuz_incl_identity}
        \top&\seq{x\ofsort r}\iota_r^r(x)=x;\\
        \label{eq:fuz_incl_composition}
        \top&\seq{x\ofsort r_0}\iota_{r_1}^{r_2}(\iota_{r_0}^{r_1}(x))=\iota_{r_0}^{r_2}(x);\\
        \label{eq:fuz_domain_iota}
        \delta_{\tup{r}}(\tup{x})\defined&\biseq{\tup{x}}\bigwedge_{n<\omega} \iota_{r_{n+1}}^{r_n}(x_{n+1})=x_n;\\
        \label{eq:fuz_property_iota}
        \delta_{\tup{r}}(\tup{x})\defined&\seq{\tup{x}}\bigwedge_{n<\omega} \iota_{\lim \tup{r}}^{r_n}(\delta_{\tup{r}}(\tup{x}))=x_n.
    \end{align}
    In the following, we will describe how to consider a partial $\bS_\fuz$-model as a $[0,\infty)$-fuzzy set, i.e., a map from a set to $[0,\infty)$.
    Suppose that a partial $\bS_\fuz$-model $M$ is given.
    We now regard $M_r$ as ``the set of all elements whose value is at most $r$.''
    By \cref{eq:fuz_incl_total}, \cref{eq:fuz_incl_monic}, \cref{eq:fuz_incl_identity}, and \cref{eq:fuz_incl_composition}, we have injections $\intpn{\iota_r^{r'}}{M}\colon M_r\hookrightarrow M_{r'}$ $(r<r')$ and observe that those make $(M_r)_{r\ge 0}$ to be a diagram of shape $[0,\infty)$.
    Let $X$ be a colimit of the diagram $(M_r)_{r\ge 0}$ in $\Set$.
    For simplicity, we consider each $M_r$ as a subset of $X$.
    Now, for each $x\in X$, we can define its value $v(x)$ as the smallest $r\ge 0$ such that $x\in M_r$.
    This definition makes sense by \cref{eq:fuz_domain_iota} and \cref{eq:fuz_property_iota}.
    Finally, we get a fuzzy set $v\colon X\to [0,\infty)$.
    Moreover, this construction yields an equivalence $\PMod\bS_\fuz\simeq\Fuz$.
\end{example}

\begin{example}[Pointed metric spaces]\label{eg:pht_for_pmet}
    We present an $\aleph_1$-ary partial Horn theory for pointed metric spaces.
    The $[0,\infty)$-sorted $\aleph_1$-ary signature $\Sigma_\pmet$ contains all symbols of $\Sigma_\fuz$ and the following additional symbols:
    \begin{gather*}
        \dot{0}\colon ()\to 0,\quad
        R_r^a\colon r\sqcap r,\quad
        j_r^a\colon r\to a \qquad (r,a\in [0,\infty)).
    \end{gather*}
    The $\aleph_1$-ary partial Horn theory $\bS_\pmet$ consists of all sequents in $\bS_\fuz$ and the following:
    \begin{align}
        \label{eq:pmet_structure-1}
        R_r^a(x,y) &\biseq{x{,}y\ofsort r} R_{r'}^a(\iota_r^{r'}(x),\iota_r^{r'}(y));\\
        R_r^a(x,y) &\seq{x{,}y\ofsort r} R_r^b(x,y) \qquad (a\le b);\\
        \bigwedge_{n<\omega}R_r^{a_n}(x,y) &\seq{x{,}y\ofsort r} R_r^{\inf\tup{a}}(x,y);\label{eq:pmet_structure-2}\\
        \label{eq:pmet_compatibility-1}
        \top &\seq{x\ofsort r} R_r^r(x,\iota_0^r(\dot{0}));\\
        j_r^a(x)\defined &\biseq{x\ofsort r} R_r^a(x,\iota_0^r(\dot{0}));\\
        j_r^a(x)\defined &\seq{x\ofsort r} \iota_a^r(j_r^a(x))=x;\label{eq:pmet_compatibility-2}
    \end{align}
    \begin{align}
        \label{eq:pmet_metric_axioms-1}
        R_r^a(x,y) &\biseq{x{,}y\ofsort r} R_r^a(y,x);\\
        R_r^a(x,y)\wedge R_r^b(y,z) &\longseq{x{,}y{,}z\ofsort r} R_r^{a+b}(x+z);\\
        \top &\seq{x\ofsort r} R_r^a(x,x);\\
        R_r^0(x,y) &\seq{x{,}y\ofsort r} x=y.\label{eq:pmet_metric_axioms-2}
    \end{align}
    As described in \cref{eg:pht_for_fuzzy}, each partial $\bS_\pmet$-model has a structure of fuzzy set $(X,v)$.
    The axioms \cref{eq:pmet_structure-1} to \cref{eq:pmet_structure-2} induce a well-defined map $d\colon X\times X\to [0,\infty]$, and the axioms \cref{eq:pmet_metric_axioms-1} to \cref{eq:pmet_metric_axioms-2} ensure that this is a generalized metric on $X$.
    In addition, by \cref{eq:pmet_compatibility-1} to \cref{eq:pmet_compatibility-2}, $d$ is compatible with the fuzzy set structure, i.e., $d(0,x)=v(x)$ holds, where $0$ denotes the constant defined by $\dot{0}$.
    Furthermore, we can see that $d$ is an ordinary metric.
    Indeed,
    \[
        d(x,y)\le d(x,0)+d(0,y)=v(x)+v(y)
    \]
    proves that $d$ always takes a bounded value.
    Thus, a partial $\bS_\pmet$-model is precisely an ordinary metric space with a constant.
    Moreover, there is an equivalence $\PMod\bS_\pmet\simeq\pMet$, where $\pMet$ is the category of pointed metric spaces and contractions.
\end{example}

\subsection{Inference rules for infinitary PHL}
In this subsection, we develop the syntax for infinitary partial Horn logic.
The inference rule given here is a direct generalization of finitary one as in \cite{palmgren2007partial}.

\begin{definition}
    Let $\Sigma$ be an $S$-sorted $\lambda$-ary signature.
    \begin{enumerate}
        \item
        A \emph{rule} over $\Sigma$ consists of:
        \begin{itemize}
            \item
            a family $(\phi_i\seq{\tup{x}_i}\psi_i)_{i<\alpha}$ of Horn sequents over $\Sigma$ with $\alpha<\lambda$,
            \item
            a Horn sequent $\phi\seq{\tup{x}}\psi$ over $\Sigma$.
        \end{itemize}
        Such a rule is expressed by
        \[
            \infer{
            \phi\seq{\tup{x}}\psi
            }{
            (\phi_i\seq{\tup{x}_i}\psi_i)_{i<\alpha}
            }
        \]
        \item
        The \emph{inference rules} of $\lambda$-ary partial Horn logic ($\PHL_\lambda$) over $\Sigma$ are the following rules.
        \begin{enumerate}
            \myitem{(Id)}\label{phl:identity}
            For each Horn formula $\tup{x}.\phi$,
            \[
                \infer[\ref*{phl:identity}]{
                \phi\seq{\tup{x}}\phi
                }{}
            \]
            \myitem{(Cut)}\label{phl:cut}
            For Horn formulas $\tup{x}.\phi,\tup{x}.\psi,\tup{x}.\chi$ with the same context $\tup{x}$,
            \[
                \infer[\ref*{phl:cut}]{
                    \phi\seq{\tup{x}}\chi
                }{
                    \phi\seq{\tup{x}}\psi &
                    \psi\seq{\tup{x}}\chi
                }
            \]
            \myitem{(Subst)}\label{phl:substitution}
            Let $\tup{x}=(x_i\ofsort s_i)_{i<\alpha}$ and $\tup{y}$ be contexts.
            Let $\tup{x}.\phi,\tup{x}.\psi$ be Horn formulas, and for each $i<\alpha$, a term $\tup{y}.\tau_i$ of sort $s_i$ is given.
            Then, the rule
            \[
                \infer[\ref*{phl:substitution}]{
                    \phi(\tup{\tau}/\tup{x})\wedge\bigwedge_{i<\alpha}\tau_i\defined \seq{\tup{y}} \psi(\tup{\tau}/\tup{x})
                }{
                    \phi\seq{\tup{x}}\psi
                }
            \]
            is applicable, where $\phi(\tup{\tau}/\tup{x})$ and $\psi(\tup{\tau}/\tup{x})$ are the Horn formulas obtained by replacing all $x_i$ to $\tau_i$ simultaneously.
            \myitem{(Refl)}\label{phl:reflexivity}
            For a context $\tup{x}=(x_i)_{i<\alpha}$ and each $i<\alpha$,
            \[
                \infer[\ref*{phl:reflexivity}]{
                \top\seq{\tup{x}}x_i\defined
                }{&}
            \]
            \myitem{(Eq)}\label{phl:equality}
            Let $\tup{x}=(x_i\ofsort s_i)_{i<\alpha}$ and $\tup{y}=(y_i\ofsort s_i)_{i<\alpha}$ be contexts of the same sort.
            Let $\tup{x}.\phi$ be a Horn formula and let $\tup{z}$ be a context containing all $x_i$ and $y_i$.
            Then, the following rule is applicable:
            \[
                \infer[\ref*{phl:equality}]{
                \phi\wedge\bigwedge_{i<\alpha}x_i=y_i \seq{\tup{z}} \phi(\tup{y}/\tup{x})
                }{&}
            \]
            \myitem{(SRel)}\label{phl:strictness_relation}
            For a relation symbol $R\colon\sqcap_{i<\alpha}s_i$, terms $\tup{x}.\tau_i$ of sort $s_i$, and $j<\alpha$,
            \[
                \infer[\ref*{phl:strictness_relation}]{
                R(\tau_i)_{i<\alpha} \seq{\tup{x}} \tau_j\defined
                }{&}
            \]
            \myitem{(SEq)}\label{phl:strictness_equality}
            For terms $\tup{x}.\tau$ and $\tup{x}.\sigma$ of the same sort,
            \[
                \infer[\ref*{phl:strictness_equality}]{
                \tau=\sigma \seq{\tup{x}} \tau\defined
                }{&}
                \qquad\qquad
                \infer[\ref*{phl:strictness_equality}]{
                \tau=\sigma \seq{\tup{x}} \sigma\defined
                }{&}
            \]
            \myitem{(SFun)}\label{phl:strictness_function}
            For a function symbol $f\colon\sqcap_{i<\alpha}s_i\to s$, terms $\tup{x}.\tau_i$ of sort $s_i$, and $j<\alpha$,
            \[
                \infer[\ref*{phl:strictness_function}]{
                f(\tau_i)_{i<\alpha} \seq{\tup{x}} \tau_j\defined
                }{&}
            \]
            \myitem{(EConj)}\label{phl:elim_conjunction}
            For Horn formulas $(\tup{x}.\phi_i)_{i<\alpha}$ and $j<\alpha$,
            \[
                \infer[\ref*{phl:elim_conjunction}]{
                \bigwedge_{i<\alpha}\phi_i \seq{\tup{x}} \phi_j
                }{&}
            \]
            \myitem{(IConj)}\label{phl:intro_conjunction}
            For Horn formulas $\tup{x}.\phi$ and $(\tup{x}.\psi_i)_{i<\alpha}$ with $\alpha<\lambda$,
            \[
                \infer[\ref*{phl:intro_conjunction}]{
                \phi\seq{\tup{x}}\bigwedge_{i<\alpha}\psi_i
                }{
                (\phi\seq{\tup{x}}\psi_i)_{i<\alpha}
                }
            \]
        \end{enumerate}
    \end{enumerate}
\end{definition}

\begin{definition}
    Let $\bT$ be a $\lambda$-ary partial Horn theory over an $S$-sorted $\lambda$-ary signature $\Sigma$.
    \begin{enumerate}
        \item
        A \emph{derivation} from $\bT$ over $\Sigma$ is a well-founded rooted tree of Horn sequents over $\Sigma$ such that
        for every node, a pair of its children and itself exhibits one of the following rules:
        \begin{itemize}
            \item
            for some $\phi\seq{\tup{x}}\psi\in\bT$,
            \[
                \infer{
                    \phi\seq{\tup{x}}\psi
                }{&}
            \]
            \item
            the inference rules of $\PHL_\lambda$ over $\Sigma$.
        \end{itemize}
        \item
        A rule over $\Sigma$
        \[
            \infer{
                \phi\seq{\tup{x}}\psi
            }{
                (\phi_i\seq{\tup{x}_i}\psi_i)_{i<\alpha}
            }
        \]
        is \emph{derivable} from $\bT$ if there exists a derivation from $\bT\cup\{ \phi_i\seq{\tup{x}_i}\psi_i \}_{i<\alpha}$ over $\Sigma$ whose root is $\phi\seq{\tup{x}}\psi$.
        A rule is simply called \emph{derivable} when it is derivable from the empty theory.
        \item
        A Horn sequent $\phi\seq{\tup{x}}\psi$ is called a \emph{$\PHL_\lambda$-theorem of $\bT$} and written as
        \[
            \bT \vdash (\phi\seq{\tup{x}}\psi)
        \]
        if the rule
        \[
            \infer{
                \phi\seq{\tup{x}}\psi
            }{&}
        \]
        is derivable from $\bT$.
        A Horn sequent is simply called a \emph{$\PHL_\lambda$-theorem} when it is a $\PHL_\lambda$-theorem of the empty theory.\qedhere
    \end{enumerate}
\end{definition}

\begin{remark}
    We can easily check that $\bT\vdash(\phi\seq{\tup{x}}\psi)$ always implies $\bT\vDash(\phi\seq{\tup{x}}\psi)$, i.e., for every $M\in\PMod\bT$, $M\vDash (\phi\seq{\tup{x}}\psi)$.
    This is the soundness theorem for $\PHL_\lambda$.
\end{remark}

The proofs of the following lemmas are omitted.

\begin{lemma}[The context weakening rule]
    Let $\tup{x}\subseteq\tup{y}$ be contexts and let $\tup{x}.\phi,\tup{x}.\psi$ be Horn formulas.
    Then, the following rule is derivable:
    \[
        \infer{
            \phi\seq{\tup{y}}\psi
        }{
            \phi\seq{\tup{x}}\psi
        }
    \]
\end{lemma}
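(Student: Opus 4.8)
The plan is to reduce the weakening rule to the substitution rule \ref{phl:substitution} applied to the \emph{identity substitution}, and then to discharge the resulting definedness hypotheses using reflexivity \ref{phl:reflexivity}. Write $\tup{x}=(x_i\oftype s_i)_{i<\alpha}$, and regard each variable $x_i$ as a term $\tup{y}.x_i$ in the larger context $\tup{y}$; this is legitimate precisely because $\tup{x}\subseteq\tup{y}$ guarantees that every $x_i$ occurs in $\tup{y}$.

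First I would apply \ref{phl:substitution} to the premise $\phi\seq{\tup{x}}\psi$ with the substitution $\tau_i:=x_i$. Since replacing each $x_i$ by itself acts as the identity on formulas, we have $\phi(\tup{\tau}/\tup{x})=\phi$ and $\psi(\tup{\tau}/\tup{x})=\psi$, so the conclusion of \ref{phl:substitution} reads
\[
    \phi\wedge\bigwedge_{i<\alpha}x_i\defined \seq{\tup{y}} \psi.
\]
The only gap between this and the desired sequent $\phi\seq{\tup{y}}\psi$ is the extra conjunct $\bigwedge_{i<\alpha}x_i\defined$, which I now remove.

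Next I would derive $\phi\seq{\tup{y}}\phi\wedge\bigwedge_{i<\alpha}x_i\defined$. By \ref{phl:reflexivity} we have $\top\seq{\tup{y}}x_i\defined$ for each $i<\alpha$, since each $x_i$ lies in $\tup{y}$. Using the empty instance of \ref{phl:intro_conjunction} to obtain $\phi\seq{\tup{y}}\top$ and then \ref{phl:cut}, we get $\phi\seq{\tup{y}}x_i\defined$ for every $i$; combined with $\phi\seq{\tup{y}}\phi$ from \ref{phl:identity}, a single application of \ref{phl:intro_conjunction} yields $\phi\seq{\tup{y}}\phi\wedge\bigwedge_{i<\alpha}x_i\defined$. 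Finally, \ref{phl:cut} chains this with the sequent from the previous step to produce $\phi\seq{\tup{y}}\psi$, as required.

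There is essentially no deep step here; the whole point is bookkeeping around the strictness side-condition that \ref{phl:substitution} attaches to its conclusion, and the main obstacle is simply recognizing that this side-condition is harmless in the identity case. The only things to watch are that the substitution really is the identity (so that no variable capture or renaming occurs) and that $\alpha<\lambda$, so that $\bigwedge_{i<\alpha}x_i\defined$ is a legitimate $\lambda$-ary Horn formula and \ref{phl:intro_conjunction} applies; both follow at once from $\tup{x}$ being a $\lambda$-ary context.
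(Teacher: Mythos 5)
Your proof is correct and takes essentially the same route as the paper: the paper's entire proof is the remark that the rule follows from \ref{phl:substitution}, and your argument is precisely the expansion of that remark, applying the substitution rule with the identity substitution $\tau_i:=x_i$ and then discharging the side conjunct $\bigwedge_{i<\alpha}x_i\defined$ via \ref{phl:reflexivity}, \ref{phl:intro_conjunction}, \ref{phl:identity}, and \ref{phl:cut}. The bookkeeping you flag (legitimacy of the identity substitution, $\alpha<\lambda$) is exactly the content the paper leaves implicit.
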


\begin{lemma}[The cut rule]
    For Horn formulas $\tup{x}.\phi,\tup{x}.\psi,\tup{x}.\chi$, the following rule is derivable:
    \[
        \infer{
            \chi\wedge\bigwedge_{i<\alpha}\phi_i\seq{\tup{x}}\theta
        }{
            (\phi_i\seq{\tup{x}}\psi_i)_{i<\alpha}
            &
            \chi\wedge\bigwedge_{i<\alpha}\psi_i\seq{\tup{x}}\theta
        }
    \]
\end{lemma}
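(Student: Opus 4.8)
The plan is to factor the desired sequent through the ``intermediate'' conjunction $\chi\wedge\bigwedge_{i<\alpha}\psi_i$ and then apply the basic cut rule \ref{phl:cut}. Concretely, it suffices to derive
\[
    \chi\wedge\bigwedge_{i<\alpha}\phi_i\seq{\tup{x}}\chi\wedge\bigwedge_{i<\alpha}\psi_i
\]
from the premises $(\phi_i\seq{\tup{x}}\psi_i)_{i<\alpha}$, since composing this with the second premise $\chi\wedge\bigwedge_{i<\alpha}\psi_i\seq{\tup{x}}\theta$ by \ref{phl:cut} immediately yields the conclusion.

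To build this intermediate sequent I would invoke the conjunction-introduction rule \ref{phl:intro_conjunction}, which (legitimately, since $\alpha<\lambda$) reduces the task to proving, with the common antecedent $\chi\wedge\bigwedge_{i<\alpha}\phi_i$, the single sequent whose consequent is $\chi$ together with the family of sequents whose consequents are $\psi_j$ for each $j<\alpha$. The sequent with consequent $\chi$ is immediate from conjunction-elimination \ref{phl:elim_conjunction}. For each fixed $j$, I would first apply \ref{phl:elim_conjunction} to obtain $\chi\wedge\bigwedge_{i<\alpha}\phi_i\seq{\tup{x}}\phi_j$, and then compose with the premise $\phi_j\seq{\tup{x}}\psi_j$ via \ref{phl:cut}.

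There is essentially no real obstacle here; the only point requiring minor care is bookkeeping of how the top-level conjunction $\chi\wedge\bigwedge_{i<\alpha}\phi_i$ is parsed, namely as a binary conjunction of $\chi$ and $\bigwedge_{i<\alpha}\phi_i$. Consequently \ref{phl:elim_conjunction} must be used once to expose the inner conjunction $\bigwedge_{i<\alpha}\phi_i$ and once more to project onto the $j$-th component $\phi_j$, the two projections being spliced together by \ref{phl:cut}. Since the premise family is indexed by fewer than $\lambda$ sequents, \ref{phl:intro_conjunction} applies and the assembled tree remains a well-founded, $(<\lambda)$-branching $\PHL_\lambda$-derivation, as required.
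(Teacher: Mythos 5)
Your proposal is correct and follows essentially the same route as the paper's proof: cut through the intermediate formula $\chi\wedge\bigwedge_{i<\alpha}\psi_i$, build it via \ref{phl:intro_conjunction} from the component sequents $\chi\wedge\bigwedge_{i<\alpha}\phi_i\seq{\tup{x}}\psi_j$ (each obtained by \ref{phl:elim_conjunction} followed by \ref{phl:cut} with the premise $\phi_j\seq{\tup{x}}\psi_j$) together with the projection onto $\chi$. Your extra remark about parsing $\chi\wedge\bigwedge_{i<\alpha}\phi_i$ as a binary conjunction and hence needing two applications of \ref{phl:elim_conjunction} spliced by \ref{phl:cut} is a harmless refinement of a step the paper writes as a single \ref{phl:elim_conjunction} inference.
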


\begin{lemma}\label{lem:equality}
    Let $\Sigma$ be an $S$-sorted $\lambda$-ary signature.
    Then, the following are $\PHL_\lambda$-theorems:
    \begin{enumerate}
        \item
        $\tau=\sigma \seq{\tup{x}} \sigma=\tau$
        \item
        $\tau=\sigma \wedge \sigma=\rho \seq{\tup{x}} \tau=\rho$
    \end{enumerate}
\end{lemma}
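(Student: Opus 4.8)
The plan is to prove both sequents first for fresh variables $u,v,w$ of the common sort and only at the very end substitute the given terms. That is, I would establish $u=v\seq{u,v}v=u$ and $u=v\wedge v=w\seq{u,v,w}u=w$, and then apply \ref{phl:substitution} with $u\mapsto\tau$, $v\mapsto\sigma$ (resp.\ $w\mapsto\rho$). The point of this reduction is that the hypotheses of \ref{phl:equality} require the substituted entities to be variables, so the real content lives at the variable level.

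The engine is the equality rule \ref{phl:equality}, which is a Leibniz/substitution principle. The naive attempt — apply \ref{phl:equality} to the reflexivity instance $u=u$ — fails, because \ref{phl:equality} replaces \emph{every} occurrence of a substituted variable simultaneously, so replacing $u$ by $v$ in $u=u$ yields only $v=v$. The key device is to carry a spectator variable that is kept fixed. I would instantiate \ref{phl:equality} with context $\tup{x}=(u,w)$, target context $\tup{y}=(u',w')$, and formula $\phi:=(u=w)$, obtaining
\[
    (u=w)\wedge u=u'\wedge w=w' \seq{u,w,u',w'} u'=w'.
\]
Substituting $w'\mapsto w$ (identity elsewhere) via \ref{phl:substitution} produces a conjunct $w=w$, which is discharged using \ref{phl:reflexivity} together with \ref{phl:intro_conjunction}, \ref{phl:elim_conjunction}, and the cut rule, leaving the \emph{Euclidean law}
\[
    (E)\colon\qquad u=w\wedge u=u' \seq{u,w,u'} u'=w.
\]

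Both variable statements follow from $(E)$. For symmetry, substitute $w\mapsto u$ in $(E)$; the resulting conjunct $u=u$ is again discharged by \ref{phl:reflexivity}, giving $u=u'\seq{u,u'}u'=u$. For transitivity, I rename $(E)$ to $a=b\wedge a=c\seq{a,b,c}c=b$; from the hypotheses $u=v$ and $v=w$ I first derive $v=u$ by the symmetry just proved, then apply the instance $a:=v,\,b:=w,\,c:=u$ of $(E)$ to conclude $u=w$, assembling the pieces with the cut rule to get $u=v\wedge v=w\seq{u,v,w}u=w$. (Transitivity can alternatively be read off directly from \ref{phl:equality} by the same spectator trick applied to $\phi:=(u=v)$, substituting only the right-hand variable.)

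Finally I globalize by \ref{phl:substitution}. This turns the variable sequents into, e.g.,
\[
    \tau=\sigma\wedge\tau\defined\wedge\sigma\defined \seq{\tup{x}} \sigma=\tau,
\]
the extra conjuncts being precisely the definedness assertions that \ref{phl:substitution} appends. These are supplied by \ref{phl:strictness_equality}, which yields $\tau=\sigma\seq{\tup{x}}\tau\defined$ and $\tau=\sigma\seq{\tup{x}}\sigma\defined$; combining with \ref{phl:intro_conjunction} and the cut rule gives $\tau=\sigma\seq{\tup{x}}\sigma=\tau$, and analogously $\tau=\sigma\wedge\sigma=\rho\seq{\tup{x}}\tau=\rho$. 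The main obstacle is purely conceptual, namely realizing that \ref{phl:equality} substitutes all occurrences at once and hence that a fixed spectator variable is needed to ``replace some occurrences''; once the correct instantiation is chosen, the remaining steps — tracking the definedness side-conditions from \ref{phl:substitution} and clearing trivial $x=x$ conjuncts — are routine bookkeeping.
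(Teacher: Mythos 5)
Your proposal is correct and takes essentially the same route as the paper: both first prove the variable-level sequents by instantiating \ref{phl:equality} with a spectator variable, then identify variables via \ref{phl:substitution}, discharge the resulting trivial and definedness conjuncts, and finally globalize to the terms $\tau,\sigma,\rho$ by another \ref{phl:substitution}, supplying the definedness side-conditions with \ref{phl:strictness_equality} and the cut rule. The only cosmetic difference is that you factor both parts through an intermediate Euclidean law, whereas the paper derives symmetry and transitivity directly from tailor-made instances of \ref{phl:equality} --- precisely the alternative you mention parenthetically.
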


\begin{lemma}\label{lem:equality_term_formula}
    Let $\Sigma$ be an $S$-sorted $\lambda$-ary signature and let $\tup{y}=(y_j\ofsort s_j)_{j<\beta}$ be a context.
    Suppose that terms $\tup{x}.\sigma_j$ and $\tup{x}.\rho_j$ of sort $s_j$ are given for each $j<\beta$.
    \begin{enumerate}
        \item
        For every term $\tup{y}.\tau$ over $\Sigma$, the following is a $\PHL_\lambda$-theorem:
        \[
            \tau(\tup{\sigma}/\tup{y})\defined\wedge\bigwedge_{j<\beta}\sigma_j=\rho_j \seq{\tup{x}} \tau(\tup{\sigma}/\tup{y})=\tau(\tup{\rho}/\tup{y})
        \]
        \item
        For every formula $\tup{y}.\phi$ over $\Sigma$, the following is a $\PHL_\lambda$-theorem:
        \[
            \phi(\tup{\sigma}/\tup{y})\wedge\bigwedge_{j<\beta}\sigma_j=\rho_j \seq{\tup{x}} \phi(\tup{\rho}/\tup{y})
        \]
    \end{enumerate}
\end{lemma}

\subsection{Completeness theorem for infinitary PHL}
We now prove the completeness theorem for infinitary partial Horn logic by the syntax given in the previous subsection.
The finitary version already appears in \cite{palmgren2007partial}.

\begin{definition}\label{def:T-term}
Let $\bT$ be a $\lambda$-ary partial Horn theory over an $S$-sorted $\lambda$-ary signature $\Sigma$.
Let $\tup{x}.\phi$ be a Horn formula over $\Sigma$.
\begin{enumerate}
    \item
    A $\lambda$-ary term $\tup{x}.\tau$ over $\Sigma$ is called a \emph{$\bT$-term generated by $\tup{x}.\phi$} if $\phi\seq{\tup{x}}\tau\defined$ is a $\PHL_\lambda$-theorem of $\bT$.
    We write $\bT\-\Term(\tup{x}.\phi)$ for the $S$-sorted set of all $\bT$-terms generated by $\tup{x}.\phi$.
    \item
    Define a binary relation $\sim_\bT$ on $\bT\-\Term(\tup{x}.\phi)$ as follows: 
    $\tup{x}.\tau\sim_\bT \tup{x}.\tau'$ if and only if $\phi\seq{\tup{x}}\tau =\tau'$ is a $\PHL_\lambda$-theorem of $\bT$.\qedhere
\end{enumerate}
\end{definition}

\begin{lemma}
    In \cref{def:T-term}, the binary relation $\sim_\bT$ is a congruence on $\bT\-\Term(\tup{x}.\phi)$.
\end{lemma}
\begin{proof}
    The reflexivity follows from the definition of $\bT$-terms.
    The symmetricity and the transitivity follow from \cref{lem:equality}.
\end{proof}

\begin{notation}
    We will denote by $[\tup{x}.\tau]_\bT$ the equivalence class of $\tup{x}.\tau$ under $\sim_\bT$.
\end{notation}

\begin{lemma}\label{lem:structure_repn_model}
    In \cref{def:T-term}, the quotient $S$-sorted set $M\coloneq\bT\-\Term(\tup{x}.\phi)/{\sim_\bT}$ becomes a partial $\Sigma$-structure as follows:
    \begin{itemize}
        \item
        For each function symbol $f\in\Sigma$, define the partial function
        \[
        \intpn{f}{M}\colon ([\tup{x}.\tau_j]_\bT)_{j<\beta}\mapsto [\tup{x}.f(\tau_j)_{j<\beta}]_\bT
        \]
        whose value is defined if and only if $\phi\seq{\tup{x}}f(\tau_j)_{j<\beta}\defined$ is a $\PHL_\lambda$-theorem of $\bT$.
        \item
        For each relation symbol $R\in\Sigma$, 
        \begin{equation*}
            \intpn{R}{M}\coloneq\{([\tup{x}.\tau_j]_\bT)_{j<\beta} \mid \phi\seq{\tup{x}}R(\tau_j)_{j<\beta}\text{ is a }\PHL_\lambda\text{-theorem of }\bT\}.
        \end{equation*}
    \end{itemize}
\end{lemma}
\begin{proof}
    By \cref{lem:equality_term_formula}, the following rules are derivable:
    \begin{gather*}
        \infer{
            \phi\seq{\tup{x}}f(\tau'_j)_{j<\beta}\defined
        }{
            \phi\seq{\tup{x}}f(\tau_j)_{j<\beta}\defined
            &
            (\phi\seq{\tup{x}}\tau_j=\tau'_j)_{j<\beta}
        }
        \quad
        \infer{
            \phi\seq{\tup{x}}f(\tau_j)_{j<\beta}=f(\tau'_j)_{j<\beta}
        }{
            \phi\seq{\tup{x}}f(\tau_j)_{j<\beta}\defined
            &
            (\phi\seq{\tup{x}}\tau_j=\tau'_j)_{j<\beta}
        }
        \\
        \infer{
            \phi\seq{\tup{x}}R(\tau'_j)_{j<\beta}
        }{
            \phi\seq{\tup{x}}R(\tau_j)_{j<\beta}
            &
            (\phi\seq{\tup{x}}\tau_j=\tau'_j)_{j<\beta}
        }
    \end{gather*}
    These complete the proof.
\end{proof}

\begin{lemma}\label{lem:basic_property_repn_model}
    Let $\bT$ be a $\lambda$-ary partial Horn theory over an $S$-sorted $\lambda$-ary signature $\Sigma$.
    Let $\tup{x}.\phi$ be a Horn formula with $\tup{x}=(x_i\ofsort s_i)_{i<\alpha}$ and let $M\coloneq\bT\-\Term(\tup{x}.\phi)/{\sim_\bT}$ be the partial $\Sigma$-structure in \cref{lem:structure_repn_model}.
    \begin{enumerate}
        \item\label{lem:basic_property_repn_model-1}
        For any Horn formula $\tup{y}.\psi$,
        \begin{equation*}
            \intpn{\tup{y}.\psi}{M} = \{([\tup{x}.\tau_j]_\bT)_{j} \mid \phi\seq{\tup{x}}\psi(\tup{\tau}/\tup{y})\text{ is a }\PHL_\lambda\text{-theorem of }\bT\}.
        \end{equation*}
        \item
        $M$ is a $\bT$-model.
        \item
        $([\tup{x}.x_i]_\bT)_{i<\alpha} \in \intpn{\tup{x}.\phi}{M}$ holds.
    \end{enumerate}
\end{lemma}
\begin{proof}
    Straightforward.
\end{proof}

\begin{definition}\label{def:representing_model}
    Let $\bT$ be a $\lambda$-ary partial Horn theory over an $S$-sorted $\lambda$-ary signature $\Sigma$.
    For each $\lambda$-ary Horn formula $\tup{x}.\phi$ over $\Sigma$, define
    \begin{equation*}
        \repn{\tup{x}.\phi}_\bT\coloneq\bT\-\Term(\tup{x}.\phi)/{\sim_{\bT}}
        \quad\in\PMod\bT.
    \end{equation*}
    This $\repn{\tup{x}.\phi}_\bT$ is called the \emph{representing $\bT$-model for $\tup{x}.\phi$}.
\end{definition}

\begin{theorem}[Completeness theorem for infinitary PHL]\label{thm:completeness_thm_for_PHL}
    Let $\bT$ be a $\lambda$-ary partial Horn theory over an $S$-sorted $\lambda$-ary signature $\Sigma$.
    For any $\lambda$-ary Horn sequent $\phi\seq{\tup{x}}\psi$ over $\Sigma$, the following are equivalent:
    \begin{enumerate}
        \item\label{thm:completeness_thm_for_PHL-1}
        $\bT\vdash (\phi\seq{\tup{x}}\psi)$.
        \item\label{thm:completeness_thm_for_PHL-2}
        $\bT\vDash (\phi\seq{\tup{x}}\psi)$, i.e., for every $M\in\PMod\bT$, $M\vDash (\phi\seq{\tup{x}}\psi)$.
    \end{enumerate}
\end{theorem}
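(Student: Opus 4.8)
The plan is to treat the two implications separately. The direction ``$\bT\vdash(\phi\seq{\tup{x}}\psi)$ implies $\bT\vDash(\phi\seq{\tup{x}}\psi)$'' is precisely the soundness statement already recorded in the remark after the definition of $\PHL_\lambda$-theorems, so nothing remains to be done there. The content is the completeness direction, ``$\bT\vDash(\phi\seq{\tup{x}}\psi)$ implies $\bT\vdash(\phi\seq{\tup{x}}\psi)$'', which I would prove not by constructing a single universal model but by using, for the given sequent, the tailor-made \emph{representing $\bT$-model} attached to its antecedent.

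Concretely, assume $\bT\vDash(\phi\seq{\tup{x}}\psi)$ and set $M:=\repn{\tup{x}.\phi}_\bT=\bT\-\Term(\tup{x}.\phi)/{\approx_\bT}$, the representing model for $\tup{x}.\phi$ from \cref{def:representing_model}. The idea is that $M$ is the ``generic'' model carrying a distinguished tuple that satisfies $\phi$ and nothing beyond what $\bT$ forces, so that truth at this tuple should coincide with provability from $\phi$.

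First I would invoke \cref{lem:basic_property_repn_model} to extract the three facts needed. Its part (ii) gives that $M$ is a $\bT$-model, so the hypothesis $\bT\vDash(\phi\seq{\tup{x}}\psi)$ applies to $M$ and yields $\intpn{\tup{x}.\phi}{M}\subseteq\intpn{\tup{x}.\psi}{M}$. Its part (iii) gives that the generic tuple $\tup{e}:=([\tup{x}.x_i]_\bT)_{i<\alpha}$ lies in $\intpn{\tup{x}.\phi}{M}$; combining the two inclusions places $\tup{e}$ in $\intpn{\tup{x}.\psi}{M}$. Finally I would feed this membership back through the satisfaction characterization \cref{lem:basic_property_repn_model-1}, instantiated with $\tup{y}:=\tup{x}$, the formula $\psi$, and the identity substitution $\tau_i:=x_i$: since then $\psi(\tup{x}/\tup{x})=\psi$ and the tuple $([\tup{x}.x_i]_\bT)_i$ is exactly $\tup{e}$, the characterization reads $\tup{e}\in\intpn{\tup{x}.\psi}{M}$ if and only if $\phi\seq{\tup{x}}\psi$ is a $\PHL_\lambda$-theorem of $\bT$. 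Thus $\bT\vdash(\phi\seq{\tup{x}}\psi)$, completing the argument.

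I expect the genuine difficulty to lie entirely in the preparatory lemmas rather than in this assembly: the construction of the $\Sigma$-structure on $M$ (\cref{lem:structure_repn_model}) and, above all, the ``truth lemma'' \cref{lem:basic_property_repn_model}, whose infinitary conjunction clause is where the regularity of $\lambda$ and the substitution and cut rules of $\PHL_\lambda$ do the real work. Granting those results, the only point in the theorem itself that warrants care is checking the admissibility of the identity substitution in \cref{lem:basic_property_repn_model-1} — that taking each $\tau_i:=x_i$ genuinely reproduces the generic tuple $\tup{e}$ and leaves $\psi$ syntactically unchanged — after which the equivalence drops out immediately.
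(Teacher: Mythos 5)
Your proposal is correct and is essentially the paper's own proof: both dispatch soundness to the earlier remark and prove completeness by evaluating the hypothesis at the representing model $\repn{\tup{x}.\phi}_\bT$, then converting membership of the generic tuple in $\intpn{\tup{x}.\psi}{\repn{\tup{x}.\phi}_\bT}$ back into provability via \cref{lem:basic_property_repn_model}\ref{lem:basic_property_repn_model-1} instantiated at the identity substitution $\tau_i:=x_i$. The only cosmetic difference is that you invoke part (iii) of that lemma explicitly, whereas the paper obtains the same fact by specializing the substitution directly.
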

\begin{proof}
    {[\cref{thm:completeness_thm_for_PHL-2}$\implies$\cref{thm:completeness_thm_for_PHL-1}]}
    By assumption, we particularly get $\repn{\tup{x}.\phi}_\bT\vDash (\phi\seq{\tup{x}}\psi)$.
    Equivalently, for $\bT$-terms $(\tup{x}.\tau_i)_i$ generated by $\tup{x}.\phi$, $\bT\vdash(\phi\seq{\tup{x}}\phi(\tup{\tau}/\tup{x}))$ implies $\bT\vdash(\phi\seq{\tup{x}}\psi(\tup{\tau}/\tup{x}))$ by \cref{lem:basic_property_repn_model}\cref{lem:basic_property_repn_model-1}.
    Taking $\tau_i$ to be $x_i$, we have $\bT\vdash(\phi\seq{\tup{x}}\psi)$.
\end{proof}

\subsection{Local presentability of categories of partial models}
In this subsection, we establish the equivalence between partial Horn theories and locally presentable categories.
Since $\lambda$-ary partial Horn theories can equivalently be translated into ($\lambda$-ary) limit theories in the sense of \cite{adamek1994locally}, all the contents in this subsection are well-known in the literature.

\begin{proposition}\label{prop:repn_obj_represents_intpn}
    Let $\bT$ be a $\lambda$-ary partial Horn theory over an $S$-sorted $\lambda$-ary signature $\Sigma$.
    Let $\tup{x}.\phi$ be a $\lambda$-ary Horn formula over $\Sigma$.
    Then, the representing model $\repn{\tup{x}.\phi}_\bT$ represents the interpretation functor $\intpn{\tup{x}.\phi}{\bullet}\colon\PMod\bT\ni M\mapsto\intpn{\tup{x}.\phi}{M}\in\Set$,
    i.e., for every $\bT$-model $M$, we have the following natural isomorphism:
    \begin{equation*}
        \intpn{\tup{x}.\phi}{M} \cong \PMod\bT(\repn{\tup{x}.\phi}_\bT,M).
    \end{equation*}
\end{proposition}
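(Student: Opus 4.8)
The plan is to produce a natural bijection by \emph{evaluation at the generators}. Write $\tup{x}=(x_i\oftype s_i)_{i<\alpha}$, abbreviate $P:=\repn{\tup{x}.\phi}_\bT$, and let $\tup{g}:=([\tup{x}.x_i]_\bT)_{i<\alpha}$ be the tuple of generators, which lies in $\intpn{\tup{x}.\phi}{P}$ by \cref{lem:basic_property_repn_model}. Before constructing the comparison map I would record the routine \emph{preservation lemma}: every $\Sigma$-homomorphism $h\colon M\to N$ preserves the interpretation of terms (if $\intpn{\tup{y}.\tau}{M}(\tup{m})$ is defined then so is $\intpn{\tup{y}.\tau}{N}(h\tup{m})$, and $h$ intertwines the two) and preserves Horn formulas covariantly (if $\tup{m}\in\intpn{\tup{y}.\psi}{M}$ then $h\tup{m}\in\intpn{\tup{y}.\psi}{N}$). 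Both follow by a straightforward induction on $\tau$ and $\psi$ directly from the commuting squares in the definition of a $\Sigma$-homomorphism. Covariant preservation of $\tup{x}.\phi$ is exactly what makes $\intpn{\tup{x}.\phi}{\bullet}$ a functor — a homomorphism acts by restricting the product map $\prod_{i}h_{s_i}$ — and it guarantees that
\[
\mathrm{ev}_M\colon \PMod\bT(P,M)\longrightarrow\intpn{\tup{x}.\phi}{M},\qquad h\longmapsto (h_{s_i}(g_i))_{i<\alpha}
\]
is well-typed.

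Next I would build the candidate inverse. Given $\tup{m}\in\intpn{\tup{x}.\phi}{M}$, set $h_{\tup{m}}([\tup{x}.\tau]_\bT):=\intpn{\tup{x}.\tau}{M}(\tup{m})$. This value is defined because a $\bT$-term generated by $\tup{x}.\phi$ satisfies the theorem $\phi\seq{\tup{x}}\tau\defined$, so soundness together with $\tup{m}\in\intpn{\tup{x}.\phi}{M}$ forces $\intpn{\tup{x}.\tau}{M}(\tup{m})$ to exist; and it is independent of the representative since $\tup{x}.\tau\approx_\bT\tup{x}.\tau'$ means $\phi\seq{\tup{x}}\tau=\tau'$ is a theorem, whence soundness gives $\intpn{\tup{x}.\tau}{M}(\tup{m})=\intpn{\tup{x}.\tau'}{M}(\tup{m})$. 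That $h_{\tup{m}}$ is a $\Sigma$-homomorphism is read directly off the structure on $P$ described in \cref{lem:structure_repn_model}: for a function symbol $f$, the assertion that $\intpn{f}{P}$ is defined at $([\tup{x}.\tau_j]_\bT)_j$ means $\phi\seq{\tup{x}}f(\tau_j)_{j}\defined$ is a theorem of $\bT$, so by soundness $\intpn{f}{M}$ is defined at $(\intpn{\tup{x}.\tau_j}{M}(\tup{m}))_j$ and agrees with $\intpn{\tup{x}.f(\tau_j)_j}{M}(\tup{m})$, which is precisely the required commuting square; the relational clause is identical, using the description of $\intpn{R}{P}$.

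Finally I would check that the two maps are mutually inverse and natural. One composite is immediate: $\mathrm{ev}_M(h_{\tup{m}})=(\intpn{\tup{x}.x_i}{M}(\tup{m}))_{i<\alpha}=\tup{m}$, since $\intpn{\tup{x}.x_i}{M}$ is the $i$-th projection. For the other, note that $[\tup{x}.\tau]_\bT=\intpn{\tup{x}.\tau}{P}(\tup{g})$ by induction on $\tau$ using the structure of \cref{lem:structure_repn_model}; hence for $\tup{m}=\mathrm{ev}_M(h)$ the preservation lemma gives $h([\tup{x}.\tau]_\bT)=\intpn{\tup{x}.\tau}{M}(\tup{m})=h_{\tup{m}}([\tup{x}.\tau]_\bT)$, so $h_{\mathrm{ev}_M(h)}=h$. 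Naturality in $M$ is the identity $\mathrm{ev}_N(k\circ h)=(\prod_i k_{s_i})(\mathrm{ev}_M(h))$ for any homomorphism $k\colon M\to N$, which holds because postcomposition commutes with evaluation at $\tup{g}$. The main obstacle is the careful bookkeeping around partiality in the second paragraph: verifying that $h_{\tup{m}}$ preserves \emph{definedness in the correct direction} (so that it is a genuine homomorphism and not merely a map of underlying sets) is where soundness of $\PHL_\lambda$ enters essentially, and the identity $[\tup{x}.\tau]_\bT=\intpn{\tup{x}.\tau}{P}(\tup{g})$ is what allows the values on $\tup{g}$ to pin down $h$ on all of $P$.
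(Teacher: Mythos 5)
Your proposal is correct and follows essentially the same route as the paper's proof: homomorphisms out of $\repn{\tup{x}.\phi}_\bT$ are pinned down by their values on the generators (by induction on term structure), and conversely each $\tup{m}\in\intpn{\tup{x}.\phi}{M}$ defines a homomorphism via $[\tup{x}.\tau]_\bT\mapsto\intpn{\tup{x}.\tau}{M}(\tup{m})$. The only difference is one of detail: you spell out the preservation lemma, the soundness arguments for well-definedness, and the naturality check, all of which the paper's terser proof leaves implicit.
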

\begin{proof}
    Take a $\bT$-model $M$ and a $\Sigma$-homomorphism $h\colon\repn{\tup{x}.\phi}_\bT\to M$.
    By induction on the structure of $\bT$-term $[\tup{x}.\tau]_\bT$, $h([\tup{x}.\tau]_\bT)$ can only be $\intpn{\tup{x}.\tau}{M}(h([\tup{x}.x_i]_\bT)_i)$.
    Conversely, given a tuple $\tup{m}\in\intpn{\tup{x}.\phi}{M}$, 
    $h([\tup{x}.\tau]_\bT)\coloneq\intpn{\tup{x}.\tau}{M}(\tup{m})$ yields a well-defined $\Sigma$-homomorphism $h\colon\repn{\tup{x}.\phi}_\bT\to M$.
    Thus, a $\Sigma$-homomorphism $h\colon\repn{\tup{x}.\phi}_\bT\to M$ bijectively corresponds to $\tup{m}\in\intpn{\tup{x}.\phi}{M}$.
\end{proof}

\begin{corollary}\label{cor:morphism_between_repn}
    Let $\bT$ be a $\lambda$-ary partial Horn theory over an $S$-sorted $\lambda$-ary signature $\Sigma$ and let $\tup{x}.\phi$ and $\tup{y}.\psi$ be $\lambda$-ary Horn formulas over $\Sigma$ with $\tup{x}=(x_i)_{i<\alpha}$.
    Then, the following data bijectively correspond to each other:
    \begin{enumerate}
        \item
        A $\Sigma$-homomorphism $h\colon\repn{\tup{x}.\phi}_\bT\to\repn{\tup{y}.\psi}_\bT$,
        \item
        Equivalence classes $([\tup{y}.\tau_i]_\bT)_{i<\alpha}$ of $\bT$-terms generated by $\tup{y}.\psi$ such that $\psi\seq{\tup{y}}\phi(\tup{\tau}/\tup{x})$ is a $\PHL_\lambda$-theorem of $\bT$.
    \end{enumerate}
\end{corollary}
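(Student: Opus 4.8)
The plan is to combine the representability of $\repn{\tup{x}.\phi}_\bT$ established in \cref{prop:repn_obj_represents_intpn} with the explicit computation of interpretations inside representing models given in \cref{lem:basic_property_repn_model}\ref{lem:basic_property_repn_model-1}. First I would instantiate \cref{prop:repn_obj_represents_intpn} at the particular $\bT$-model $M:=\repn{\tup{y}.\psi}_\bT$, which immediately yields a bijection
\[
    \PMod\bT\bigl(\repn{\tup{x}.\phi}_\bT,\repn{\tup{y}.\psi}_\bT\bigr)\;\cong\;\intpn{\tup{x}.\phi}{\repn{\tup{y}.\psi}_\bT}.
\]
Thus the homomorphisms in (i) are identified with the elements of the set on the right, and it remains only to recognise that set as the data described in (ii).

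For the second step I would apply \cref{lem:basic_property_repn_model}\ref{lem:basic_property_repn_model-1} with the roles of the two formulas interchanged: the representing model is built from $\tup{y}.\psi$ and is probed by the formula $\tup{x}.\phi$, so that lemma gives
\[
    \intpn{\tup{x}.\phi}{\repn{\tup{y}.\psi}_\bT}=\bigl\{([\tup{y}.\tau_i]_\bT)_{i<\alpha}\mid \psi\seq{\tup{y}}\phi(\tup{\tau}/\tup{x})\text{ is a }\PHL_\lambda\text{-theorem of }\bT\bigr\}.
\]
This is precisely the collection of tuples in (ii); note that each $\tau_i$ automatically ranges over $\bT$-terms generated by $\tup{y}.\psi$, since $[\tup{y}.\tau_i]_\bT$ must be a genuine element of $\repn{\tup{y}.\psi}_\bT=\bT\-\Term(\tup{y}.\psi)/{\approx_\bT}$. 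Composing the two bijections then proves the claim.

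To make the correspondence transparent I would also record its effect on elements: under \cref{prop:repn_obj_represents_intpn} a homomorphism $h$ is sent to the tuple $(h([\tup{x}.x_i]_\bT))_{i<\alpha}$ of images of the generating variables, so $h$ corresponds to $([\tup{y}.\tau_i]_\bT)_{i<\alpha}$ exactly when $h([\tup{x}.x_i]_\bT)=[\tup{y}.\tau_i]_\bT$ for every $i<\alpha$. Since both constituent bijections are already supplied by the cited results, there is no genuine obstacle; the only point demanding care is the bookkeeping of the substitution $\phi(\tup{\tau}/\tup{x})$ and the verification that the two phrasings of the defining condition coincide, which is immediate from the definitions of $\intpn{-}{-}$ and of $\bT$-terms.
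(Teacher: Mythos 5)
Your proposal is correct and is essentially identical to the paper's own proof: both combine \cref{prop:repn_obj_represents_intpn} (instantiated at $M=\repn{\tup{y}.\psi}_\bT$) with \cref{lem:basic_property_repn_model}\ref{lem:basic_property_repn_model-1} to identify $\PMod\bT(\repn{\tup{x}.\phi}_\bT,\repn{\tup{y}.\psi}_\bT)\cong\intpn{\tup{x}.\phi}{\repn{\tup{y}.\psi}_\bT}$ with the set of tuples in (ii). The extra element-level bookkeeping you record is a harmless (and clarifying) addition.
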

\begin{proof}
    By \cref{prop:repn_obj_represents_intpn} and \cref{lem:basic_property_repn_model}\cref{lem:basic_property_repn_model-1}, we have:
    \begin{align*}
        \PMod\bT(\repn{\tup{x}.\phi}_\bT , \repn{\tup{y}.\psi}_\bT)
        &\cong \intpn{\tup{x}.\phi}{\repn{\tup{y}.\psi}_\bT}
        \\
        &= \{([\tup{y}.\tau_i]_\bT)_{i<\alpha} \mid \bT\vdash (\psi\seq{\tup{y}}\phi(\tup{\tau}/\tup{x})) \}.
    \end{align*}
    This completes the proof.
\end{proof}

\begin{notation}
    Denote by
    \[
        \repn{\tup{\tau}}_\bT\colon \repn{\tup{x}.\phi}_\bT\to\repn{\tup{y}.\psi}_\bT
    \]
    the morphism corresponding to $\bT$-terms $(\tup{y}.\tau_i)_{i<\alpha}$ by \cref{cor:morphism_between_repn}.
\end{notation}

\begin{remark}\label{rem:filtered_colim_partial_model}
    Given a $\lambda$-filtered diagram $M_\bullet \colon\bI\to\PMod\bT$, let us construct a colimit $N=\Colim{I\in\bI}M_I$ in $\PMod\bT$.
    For each sort $s\in S$ define $N_s\coloneq\Colim{I\in\bI}(M_I)_s$ as a colimit in $\Set$, i.e.,
    $N_s$ is the quotient set of the disjoint union $\coprod_{I\in\bI}(M_I)_s=\{ (I;a) \mid I\in\bI, a\in (M_I)_s \}$ by an equivalence relation $\sim_s$.
    Here $(I;a)\sim_s (J;b)$ holds if and only if there exists a cospan
    $
    \begin{tikzcd}[scriptsize]
        I\arrow[r,"p"] & K & J\arrow[l,"q"']
    \end{tikzcd}
    $
    in $\bI$ satisfying $M_p(a)=M_q(b)$.
    Let $[I;a]$ denote the equivalence class with respect to $\sim_s$ containing $(I;a)$.
    
    Now the $S$-sorted set $N=(N_s)_{s\in S}$ yields a partial $\Sigma$-structure as follows:
    \begin{itemize}
        \item
        For each function symbol $f\colon \sqcap_{i<\alpha} s_i\to s$ in $\Sigma$, $\intpn{f}{N}([I_i;a_i])_{i<\alpha}$ is defined if and only if there exists a cocone $(I_i\longarr[p_i]I)_{i<\alpha}$ such that $(M_{p_i}(a_i))_{i<\alpha}$ belongs to the domain of $\intpn{f}{M_I}$, and then define $\intpn{f}{N}([I_i;a_i])_{i<\alpha}\coloneq [I~;~\intpn{f}{M_I}(M_{p_i}(a_i))_{i<\alpha}]$;
        \item
        For each relation symbol $R\colon \sqcap_{i<\alpha}s_i$ in $\Sigma$, $([I_i;a_i])_{i<\alpha}$ belongs to $\intpn{R}{N}$ if and only if there exists a cocone $(I_i\longarr[p_i]I)_{i<\alpha}$ such that $(M_{p_i}(a_i))_{i<\alpha}$ belongs to $\intpn{R}{M_I}$.
    \end{itemize}
    Since each $M_I$ is a $\bT$-model, we see that $N$ is a $\bT$-model and a colimit of the diagram $M_\bullet$.
\end{remark}

\begin{theorem}\label{thm:PMod_is_loc_presn}
    Let $\bT$ be a $\lambda$-ary partial Horn theory over an $S$-sorted $\lambda$-ary signature $\Sigma$.
    Then, the category $\PMod\bT$ is locally $\lambda$-presentable.
\end{theorem}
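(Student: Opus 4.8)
The plan is to verify the standard characterization of local $\lambda$-presentability \cite{adamek1994locally}: $\PMod\bT$ is locally $\lambda$-presentable as soon as it is cocomplete and admits a small strong generator (indeed a dense subcategory) consisting of $\lambda$-presentable objects. I would take the representing models $\repn{\tup{x}.\phi}_\bT$ of \cref{def:representing_model} as the candidate generator. Up to isomorphism these form a \emph{set}: a $\lambda$-ary context uses $<\lambda$ of the fixed $\lambda$-many variables in $\mathrm{Var}$, and the raw $\lambda$-ary Horn formulas over $\Sigma$ constitute a set, so there are only set-many Horn formulas $\tup{x}.\phi$ and hence set-many representing models.

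First I would show each $\repn{\tup{x}.\phi}_\bT$ is $\lambda$-presentable. By \cref{prop:repn_obj_represents_intpn} we have $\PMod\bT(\repn{\tup{x}.\phi}_\bT, -) \cong \intpn{\tup{x}.\phi}{-}$, so it suffices to prove that the interpretation functor $\intpn{\tup{x}.\phi}{-}$ preserves $\lambda$-filtered colimits. For this I use the explicit construction of $\lambda$-filtered colimits $N = \Colim{I\in\bI} M_I$ from \cref{rem:filtered_colim_partial_model}. Writing $\tup{x} = (x_i)_{i<\alpha}$ with $\alpha < \lambda$, any tuple in $\intpn{\tup{x}.\phi}{N}$ is represented by $<\lambda$ many classes $([I_i;a_i])_{i<\alpha}$; since $\bI$ is $\lambda$-filtered and $\alpha<\lambda$, the $a_i$ can be transported to a common stage $I$, and because $\phi$ is a conjunction of $<\lambda$ atomic formulas each of whose satisfaction in $N$ is witnessed at some stage, $\lambda$-filteredness lets a single stage $I$ witness all of them simultaneously; thus the tuple already lies in the image of $\intpn{\tup{x}.\phi}{M_I}$. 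An analogous argument shows that two elements coming from stages that are identified in $N$ are already identified at some stage. Hence the canonical map $\Colim{I} \intpn{\tup{x}.\phi}{M_I} \to \intpn{\tup{x}.\phi}{N}$ is a bijection, as required.

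Next I would show these representing models form a strong generator. The objects $\repn{x.\top}_\bT$ (one variable $x\oftype s$) represent the underlying-set functors $M \mapsto M_s$, and a $\Sigma$-homomorphism is determined by its components on the underlying sets, so these already give a jointly faithful family; enlarging to all representing models and using \cref{cor:morphism_between_repn} to detect membership in relations and definedness of function symbols, the family becomes jointly conservative. Equivalently, and this is the form I would actually establish, every $\bT$-model $M$ is the canonical colimit of the essentially small, $\lambda$-filtered diagram of representing models mapping into it, so the generator is in fact dense.

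The main obstacle is cocompleteness of $\PMod\bT$: \cref{rem:filtered_colim_partial_model} supplies $\lambda$-filtered colimits, but coproducts and coequalizers remain. I would obtain these by realising $\PMod\bT$ as a reflective subcategory of the category $\PStr\Sigma$ of all partial $\Sigma$-structures. Running the preceding $\lambda$-presentability argument for the \emph{empty} theory shows $\PStr\Sigma$ itself is cocomplete with a strong generator of $\lambda$-presentable objects, hence locally $\lambda$-presentable; and by \cref{thm:completeness_thm_for_PHL} together with \cref{prop:repn_obj_represents_intpn}, a structure $M$ lies in $\PMod\bT$ iff it is orthogonal to the set of morphisms $\repn{\tup{x}.\phi}_{\varnothing} \to \repn{\tup{x}.\phi\wedge\psi}_{\varnothing}$ (the representing structures for the empty theory) indexed by the sequents $(\phi \seq{\tup{x}}\psi)\in\bT$, since the sequent holds in $M$ precisely when the induced inclusion $\intpn{\tup{x}.\phi\wedge\psi}{M}\to\intpn{\tup{x}.\phi}{M}$ is a bijection. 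As these morphisms live between $\lambda$-presentable objects, the orthogonal subcategory $\PMod\bT$ is reflective in $\PStr\Sigma$ and closed under $\lambda$-filtered colimits; being a reflective subcategory, it is cocomplete, and a full reflective subcategory of a locally $\lambda$-presentable category that is closed under $\lambda$-filtered colimits is again locally $\lambda$-presentable \cite{adamek1994locally}. I expect the delicate points to be the single-stage witnessing in the $\lambda$-presentability argument and a clean direct proof that $\PStr\Sigma$ is locally $\lambda$-presentable.
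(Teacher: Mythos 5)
Your accessibility steps are sound and essentially identical to the paper's: representability (\cref{prop:repn_obj_represents_intpn}) plus the explicit construction of $\lambda$-filtered colimits in \cref{rem:filtered_colim_partial_model} shows each $\repn{\tup{x}.\phi}_\bT$ is $\lambda$-presentable, and the canonical $\lambda$-filtered diagram of representing models over any $M$ has colimit $M$. Your reduction of $\PMod\bT$ to an orthogonality class in $\PStr\Sigma$ is also correct: $M\vDash(\phi\seq{\tup{x}}\psi)$ precisely when $M$ is orthogonal to $\repn{\tup{x}.\phi}_{\varnothing}\to\repn{\tup{x}.\phi\wedge\psi}_{\varnothing}$, these are morphisms between $\lambda$-presentable objects of $\PStr\Sigma$, and a reflective, $\lambda$-filtered-colimit-closed full subcategory of a locally $\lambda$-presentable category is again locally $\lambda$-presentable.

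The genuine gap is your base case. The claim that ``running the preceding $\lambda$-presentability argument for the empty theory shows $\PStr\Sigma$ itself is cocomplete with a strong generator of $\lambda$-presentable objects'' is unjustified: that argument yields only $\lambda$-filtered colimits and a dense set of $\lambda$-presentable objects, i.e.\ $\lambda$-accessibility of $\PStr\Sigma$, and says nothing about coproducts or coequalizers. Since $\PStr\Sigma=\PMod\varnothing$, cocompleteness of $\PStr\Sigma$ is literally an instance of the statement you are trying to prove, so as written the argument is circular; and coequalizers of partial structures are not innocuous, since a sort-wise quotient need not carry well-defined partial operations (one must quotient by a generated ``partial congruence''). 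The paper closes exactly this hole without ever touching cocompleteness: small limits in $\PMod\bT$ are computed sort-wise (products and equalizers of partial models are again partial models), so $\PMod\bT$ is complete, and an accessible complete category is locally presentable by \cite[2.47 Corollary]{adamek1994locally}. You can repair your proof the same way --- but note that once you invoke this criterion you may apply it directly to $\PMod\bT$, which renders the entire reflective-subcategory detour through $\PStr\Sigma$ unnecessary. Alternatively, keep the detour and prove cocompleteness of $\PStr\Sigma$ by hand (coproducts as disjoint unions, coequalizers via generated partial congruences); that is genuine additional work your proposal does not contain.
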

\begin{proof}
    By \cref{rem:filtered_colim_partial_model}, the category $\PMod\bT$ has $\lambda$-filtered colimits.
    We first observe that all representing models $\repn{\tup{x}.\phi}_\bT$ are $\lambda$-presentable objects in $\PMod\bT$.
    By \cref{prop:repn_obj_represents_intpn}, this assertion is equivalent to saying that for every Horn formula $\tup{x}.\phi$, the interpretation functor $\intpn{\tup{x}.\phi}{\bullet}\colon\PMod\bT\to\Set$ preserves $\lambda$-filtered colimits, which follows from the construction of $\lambda$-filtered colimits in \cref{rem:filtered_colim_partial_model}.

    We next prove that every $\bT$-model $M$ is a $\lambda$-filtered colimit of representing models.
    To prove this, consider the following small category $\C_M$ for each $\bT$-model $M$:
    \begin{itemize}
        \item
        An object in $\C_M$ is a pair $(\tup{x}.\phi,\tup{a})$ of a Horn formula $\tup{x}.\phi$ over $\Sigma$ and elements $\tup{a}\in\intpn{\tup{x}.\phi}{M}$.
        \item
        A morphism $(\tup{x}.\phi,\tup{a})\to(\tup{y}.\psi,\tup{b})$ in $\C_M$ is a $\Sigma$-homomorphism $\repn{\tup{\tau}}_\bT\colon \repn{\tup{x}.\phi}_\bT\to\repn{\tup{y}.\psi}_\bT$ such that $\intpn{\tup{y}.\tau_i}{M}(\tup{b})=a_i~(\forall i)$.
    \end{itemize}
    We can prove that the small category $\C_M$ is $\lambda$-filtered as follows:
    Let $(\tup{x}_k.\phi_k,\tup{a}_k)_{k<\gamma}$ be objects in $\C_M$ with $\gamma<\lambda$.
    For simplicity, we assume that no pair of contexts $(\tup{x}_k)_k$ has a common variable.
    Then, $(\tup{x}.\bigwedge_{k<\gamma}\phi_k, (\tup{a}_k)_{k<\gamma})$ becomes the vertex of some cocone over $(\tup{x}_k.\phi_k,\tup{a}_k)_{k<\gamma}$, where $\tup{x}\coloneq\cup_{k<\gamma}\tup{x}_k$.
    Let $(\repn{\tup{\tau}_k}_\bT)_{k<\gamma} \colon (\tup{x}.\phi,\tup{a})\to (\tup{y}.\psi,\tup{b})$ be parallel morphisms in $\C_M$ with $\gamma<\lambda$.
    Then, $(\tup{y}.\psi\wedge\bigwedge_{i,k,k'}\tau_{k,i}=\tau_{k',i},\tup{b})$ becomes the vertex of some cocone over the parallel morphisms $(\repn{\tup{\tau}_k}_\bT)_{k<\gamma}$.
    Thus, $\C_M$ is a $\lambda$-filtered category.
    
    An assignment $(\tup{x}.\phi,\tup{a})\mapsto \repn{\tup{x}.\phi}_\bT$ yields a functor $D_M\colon\C_M\to\PMod\bT$.
    Moreover, by \cref{prop:repn_obj_represents_intpn}, we have a canonical cocone $\xi$ over $D_M$
    \[
        \xi_{\tup{x}.\phi,\tup{a}}\colon \repn{\tup{x}.\phi}_\bT\longarr[\tup{a}] M,
        ~~
        \text{where }(\tup{x}.\phi,\tup{a})\in\C_M.
    \]
    We now show that the cocone $\xi$ forms a colimit.
    Take an arbitrary cocone $\zeta$ over $D_M$
    \[
        \repn{\tup{x}.\phi}_\bT\longarr[\zeta_{\tup{x}.\phi,\tup{a}}] N,
        ~~
        \text{where }(\tup{x}.\phi,\tup{a})\in\C_M.
    \]
    We have to construct a canonical morphism $M\arr[h]N$ and prove its uniqueness.
    For every element $a\in M_s$ of sort $s$, the following diagram should commute:
    \begin{equation}\label{eq:definition_of_h}
        \begin{tikzcd}[huge, every label/.append style = {font = \small}]
            \repn{x\ofsort s.\top}_\bT\arrow[r,"a"]\arrow[rd,"\zeta_{x\ofsort s.\top,a}"'] & M\arrow[d,"h"] \\
            & N
        \end{tikzcd}
    \end{equation}
    This determines $h$ uniquely.
    Moreover, by the naturality of the cocone $\zeta$, \cref{eq:definition_of_h} defines the desired $\Sigma$-homomorphism $M\arr[h]N$.

    Finally, we have proved that the category $\PMod\bT$ is $\lambda$-accessible.
    Since all small limits in $\PMod\bT$ can be constructed sort-wisely in the same way as in $\Set$, by \cite[2.47 Corollary]{adamek1994locally}, we conclude that $\PMod\bT$ is locally $\lambda$-presentable.
\end{proof}

\begin{theorem}\label{thm:repn_enumerates_presn}
    Let $\bT$ be a $\lambda$-ary partial Horn theory over an $S$-sorted $\lambda$-ary signature $\Sigma$.
    Then, for any $\bT$-model $M$, the following are equivalent:
    \begin{enumerate}
        \item
        $M$ is a $\lambda$-presentable object in $\PMod\bT$.
        \item\label{thm:repn_enumerates_presn-2}
        There exists a $\lambda$-ary Horn formula $\tup{x}.\phi$ such that $M\cong\repn{\tup{x}.\phi}_\bT$ in $\PMod\bT$.
    \end{enumerate}
\end{theorem}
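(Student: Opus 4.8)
The plan is to prove the two implications separately, with \ref{thm:repn_enumerates_presn-2}$\implies$(i) being immediate. For that direction, the proof of \cref{thm:PMod_is_loc_presn} already establishes that every representing model $\repn{\tup{x}.\phi}_\bT$ is $\lambda$-presentable; since $\lambda$-presentability is preserved by isomorphism, any $M\cong\repn{\tup{x}.\phi}_\bT$ is $\lambda$-presentable as well.

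For the main direction, suppose $M$ is $\lambda$-presentable. The proof of \cref{thm:PMod_is_loc_presn} exhibits $M$ as the vertex of a $\lambda$-filtered colimit cocone $\xi$ over the diagram $D_M\colon\C_M\to\PMod\bT$ of representing models. First I would invoke $\lambda$-presentability: the canonical bijection $\PMod\bT(M,M)\cong\Colim{(\tup{x}.\phi,\tup{a})\in\C_M}\PMod\bT(M,\repn{\tup{x}.\phi}_\bT)$ lets me factor $\id_M$ through some component, yielding an object $(\tup{x}.\phi,\tup{a})\in\C_M$ and a morphism $r\colon M\to\repn{\tup{x}.\phi}_\bT$ with $\xi_{\tup{x}.\phi,\tup{a}}\circ r=\id_M$. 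Hence $M$ is a retract of the representing model $R:=\repn{\tup{x}.\phi}_\bT$, and $e:=r\circ\xi_{\tup{x}.\phi,\tup{a}}$ is an idempotent on $R$ whose splitting is $M$.

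Next I would render this idempotent syntactic. By \cref{cor:morphism_between_repn}, $e$ corresponds to a tuple of $\bT$-terms $([\tup{x}.\tau_i]_\bT)_{i<\alpha}$ generated by $\tup{x}.\phi$ with $\phi\seq{\tup{x}}\phi(\tup{\tau}/\tup{x})$ a $\PHL_\lambda$-theorem, and (by \cref{prop:repn_obj_represents_intpn}) the induced endomorphism $e^*$ of the representable functor sends $\tup{a}\in\intpn{\tup{x}.\phi}{N}$ to $(\intpn{\tup{x}.\tau_i}{N}(\tup{a}))_{i<\alpha}$, naturally in $N$. The candidate for $M$ is then $\repn{\tup{x}.\phi'}_\bT$ with $\phi':=\phi\wedge\bigwedge_{i<\alpha}x_i=\tau_i$. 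The crux is to check that for every $N$ the set $\intpn{\tup{x}.\phi'}{N}$ coincides with the fixed-point set of $e^*$ on $\intpn{\tup{x}.\phi}{N}$: indeed $\tup{a}\in\intpn{\tup{x}.\phi'}{N}$ iff $\tup{a}\in\intpn{\tup{x}.\phi}{N}$ and $\intpn{\tup{x}.\tau_i}{N}(\tup{a})=a_i$ for all $i$ (the definedness of $\intpn{\tup{x}.\tau_i}{N}(\tup{a})$ being automatic, since $\tau_i$ is a $\bT$-term generated by $\tup{x}.\phi$, so $\phi\seq{\tup{x}}\tau_i\defined$ holds in $N$), which is exactly the fixed-point condition. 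Since the splitting of $e$ presents $\PMod\bT(M,-)$ as the fixed points of $e^*$ on $\PMod\bT(R,-)\cong\intpn{\tup{x}.\phi}{-}$, I obtain natural isomorphisms $\PMod\bT(M,-)\cong\intpn{\tup{x}.\phi'}{-}\cong\PMod\bT(\repn{\tup{x}.\phi'}_\bT,-)$, whence $M\cong\repn{\tup{x}.\phi'}_\bT$ by Yoneda.

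I expect the main obstacle to be this last step: pinning down the splitting of $e$ explicitly as a representing model. Translating $e$ into the terms $\tup{\tau}$ is routine given \cref{cor:morphism_between_repn}, but one must verify carefully that the fixed-point subfunctor of $\intpn{\tup{x}.\phi}{-}$ is represented precisely by adjoining the equations $x_i=\tau_i$, and in particular that the definedness constraints built into the equality formulas $x_i=\tau_i$ match the (automatic) definedness of $\intpn{\tup{x}.\tau_i}{N}(\tup{a})$ on $\intpn{\tup{x}.\phi}{N}$; this is the point at which the partiality of the logic could create a mismatch if the strictness of the terms were not taken into account.
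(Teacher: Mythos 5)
Your proof is correct and is essentially the paper's argument: reduce to the fact that a $\lambda$-presentable $M$ is a retract of a representing model (via the $\lambda$-filtered colimit from \cref{thm:PMod_is_loc_presn}), then realize the splitting syntactically by adjoining equations to the Horn formula and identifying hom-sets with interpretations via \cref{prop:repn_obj_represents_intpn}. The only difference is that the paper proves the slightly stronger claim that representing models are closed under coequalizers of arbitrary parallel pairs, using $\chi:=\psi\wedge\bigwedge_i\tau_i=\sigma_i$, whereas you specialize this to the split idempotent case with $\phi':=\phi\wedge\bigwedge_i x_i=\tau_i$; the underlying computation, including the handling of definedness, is the same.
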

\begin{proof}
    By the proof of \cref{thm:PMod_is_loc_presn}, it suffices to show that the class of all representing models $\repn{\tup{x}.\phi}_\bT$ is, up to isomorphism, closed under retracts.
    Here, we prove more:
    That class is closed under coequalizers.
    Let
    \[
        \begin{tikzcd}[large]
            \repn{\tup{x}.\phi}_\bT \arrow[r,shift left=2,"\repn{\tup{\tau}}_\bT"]\arrow[r,shift right=2,"\repn{\tup{\sigma}}_\bT"'] & \repn{\tup{y}.\psi}_\bT
        \end{tikzcd}\incat{\PMod\bT}
    \]
    be parallel morphisms between representing objects with $\tup{x}=(x_i)_{i<\alpha}$.
    Considering the Horn formula $\tup{y}.\chi\coloneq\tup{y}.\left( \psi\wedge\bigwedge_{i<\alpha}\tau_i=\sigma_i \right)$, we have
    \begin{align*}
        \PMod\bT(\repn{\tup{y}.\chi}_\bT, M) &\cong \intpn{\tup{y}.\chi}{M} \\
        &= \{ \tup{a}\in\intpn{\tup{y}.\psi}{M} \mid \forall i,~ \intpn{\tup{y}.\tau_i}{M}(\tup{a})=\intpn{\tup{y}.\sigma_i}{M}(\tup{a}) \} \\
        &= \{ h\in \PMod\bT(\repn{\tup{y}.\psi}_\bT, M) \mid h\circ\repn{\tup{\tau}}_\bT=h\circ\repn{\tup{\sigma}}_\bT \}
    \end{align*}
    for every $\bT$-model $M$ by \cref{prop:repn_obj_represents_intpn}.
    This implies that $\repn{\tup{y}.\chi}_\bT$ is a coequalizer of $\repn{\tup{\tau}}_\bT$ and $\repn{\tup{\sigma}}_\bT$.
\end{proof}

\begin{theorem}\label{thm:PHT_equiv_loc_presn}
    For a category $\A$, the following are equivalent:
    \begin{enumerate}
        \item\label{thm:PHT_equiv_loc_presn-1}
        $\A$ is locally $\lambda$-presentable.
        \item\label{thm:PHT_equiv_loc_presn-2}
        There exists a $\lambda$-ary partial Horn theory such that $\A\simeq\PMod\bT$.
    \end{enumerate}
\end{theorem}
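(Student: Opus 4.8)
The implication $\ref{thm:PHT_equiv_loc_presn-2}\Rightarrow\ref{thm:PHT_equiv_loc_presn-1}$ is already settled: it is exactly \cref{thm:PMod_is_loc_presn}. So the entire content is the reverse implication, and my plan is to manufacture, from a locally $\lambda$-presentable category $\A$, a $\lambda$-ary partial Horn theory $\bT$ with $\A\simeq\PMod\bT$. The idea is to present $\A$ through its $\lambda$-presentable objects. Write $\mathcal{C}$ for a small skeleton of $\lpresn{\A}$; since $\lpresn{\A}$ is closed under $\lambda$-small colimits in $\A$, the category $\mathcal{C}$ has all $\lambda$-small colimits, so $\mathcal{C}^{\op}$ has all $\lambda$-small limits. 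By Gabriel--Ulmer duality (\cite[1.46]{adamek1994locally}), the restricted Yoneda functor $A\mapsto\A(-,A)|_{\mathcal{C}}$ exhibits an equivalence between $\A$ and the category $\mathrm{Lex}_\lambda(\mathcal{C}^{\op},\Set)$ of functors $\mathcal{C}^{\op}\to\Set$ preserving $\lambda$-small limits, with natural transformations as morphisms. Thus it suffices to present $\mathrm{Lex}_\lambda(\mathcal{C}^{\op},\Set)$ as $\PMod\bT$.

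First I would encode ``being a functor $\mathcal{C}^{\op}\to\Set$.'' Take the $\ob\mathcal{C}$-sorted $\lambda$-ary signature with, for each morphism $u\colon C\to C'$ of $\mathcal{C}$, a unary function symbol $\hat u$ of arity $C'\to C$, and impose totality $\top\seq{x\oftype C'}\hat u(x)\defined$, unitality $\top\seq{x\oftype C}\widehat{\id_C}(x)=x$, and functoriality $\top\seq{x\oftype C''}\widehat{u\circ v}(x)=\hat v(\hat u(x))$. A partial model of this fragment is precisely a functor $\mathcal{C}^{\op}\to\Set$, and $\Sigma$-homomorphisms are exactly natural transformations; that is, this fragment presents the presheaf category $[\mathcal{C}^{\op},\Set]$.

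The essential step is then to cut $[\mathcal{C}^{\op},\Set]$ down to the $\lambda$-continuous functors by Horn sequents. Fix a $\lambda$-small colimit cocone $(\kappa_i\colon D_i\to C)_i$ in $\mathcal{C}$; such cocones form a set because $\mathcal{C}$ is small and the diagram shapes are $\lambda$-small. Preservation of the corresponding limit by a model $M$ means the comparison $MC\to\lim_i MD_i$ is a bijection onto the compatible families. Injectivity is Horn-expressible via the $\lambda$-small conjunction
\[
    \bigwedge_{i}\hat\kappa_i(x)=\hat\kappa_i(y)\seq{x,y\oftype C}x=y,
\]
which is where $\lambda$-ary conjunction is needed. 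Surjectivity is existential and cannot be stated directly; here I exploit \emph{partiality}. I adjoin a partial function symbol $\theta$ of arity $\sqcap_i D_i\to C$ together with sequents declaring its domain to be exactly the compatible families and its value to be the witnessing element:
\[
    \theta(\tup{z})\defined\biseq{\tup{z}}\bigwedge_{\delta}\hat\delta(z_j)=z_i,
    \qquad
    \theta(\tup{z})\defined\seq{\tup{z}}\bigwedge_{i}\hat\kappa_i(\theta(\tup{z}))=z_i,
\]
where $\delta$ ranges over the morphisms $D_i\to D_j$ of the diagram (and $\hat\delta$ over their images under $M$). Together with injectivity, these say exactly that $MC\cong\lim_i MD_i$. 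Adjoining such a symbol and such sequents for every $\lambda$-small colimit cocone of $\mathcal{C}$ yields a $\lambda$-ary partial Horn theory $\bT$ whose models are precisely the $\lambda$-continuous functors.

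To finish, I would verify that $\PMod\bT\simeq\mathrm{Lex}_\lambda(\mathcal{C}^{\op},\Set)$ \emph{as categories}, i.e.\ that $\Sigma$-homomorphisms coincide with natural transformations. This holds because each $\theta$ is \emph{uniquely determined} by the $\hat u$ through the limit property: any natural transformation automatically commutes with the $\hat\kappa_i$, and by injectivity it then commutes with $\theta$ (its definedness being forced by compatibility), so it is a $\Sigma$-homomorphism; conversely a $\Sigma$-homomorphism is by construction natural in the $\hat u$. I expect the main obstacle to be precisely this encoding of limit-preservation: the ``there exists a limit element'' clause is not Horn, and the crux is to replace it by a partial operation whose domain is Horn-definable (the compatibility formula) and whose uniqueness makes it impose no extra constraint on morphisms---this is exactly the feature of \emph{partial} Horn logic that lets it, rather than ordinary Horn logic, capture all locally presentable categories. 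A secondary point is the bookkeeping that the signature and theory are genuinely small and $\lambda$-ary, which follows from smallness of $\mathcal{C}$ and $\lambda$-smallness of the diagram shapes.
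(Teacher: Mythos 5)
Your proposal is correct, and it proves the nontrivial direction \ref{thm:PHT_equiv_loc_presn-1}$\implies$\ref{thm:PHT_equiv_loc_presn-2} by a genuinely different route from the paper, though both rest on the same key trick. The paper invokes \cite[1.52~Corollary]{adamek1994locally} to present $\A$ as the category of models of a $\lambda$-limit sketch whose selected cones may be assumed to be $\lambda$-ary products and pullbacks, and then encodes: sketch morphisms as total unary function symbols, each product cone as a total tupling symbol, and each pullback cone as a \emph{partial} pairing symbol whose domain is the compatibility equation. You instead use Gabriel--Ulmer duality to realize $\A$ as the $\lambda$-continuous functors on $(\lpresn{\A})^{\op}$, and you treat an arbitrary $\lambda$-small cone uniformly: an injectivity sequent for the comparison map together with a partial section $\theta$ whose domain is the Horn-definable compatibility formula. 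The shared insight is that partiality converts the non-Horn existential ``the comparison map is surjective'' into a Horn-axiomatizable partial operation; what differs is the representation theorem used and the fact that you avoid the sketch-normalization to products and pullbacks, at the cost of quantifying over all $\lambda$-small cocones (only the bookkeeping you acknowledge: one must fix a \emph{set} of representative cocones, which suffices since limit-preservation is invariant under isomorphism of diagrams). One point in your write-up deserves emphasis, since it is where such encodings can silently fail: because $\Sigma$-homomorphisms are required to preserve definedness, the ``only if'' half of your biconditional $\theta(\tup{z})\defined\biseq{\tup{z}}\bigwedge_{\delta}\hat\delta(z_j)=z_i$, together with the injectivity sequent, is exactly what makes the interpretation of $\theta$ uniquely determined by the underlying functor; without it one would get spurious models with over-defined $\theta$ and spurious non-isomorphisms, breaking the equivalence. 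You handle this correctly, and it is precisely what makes $\Sigma$-homomorphisms coincide with natural transformations.
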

\begin{proof}
    By \cref{thm:PMod_is_loc_presn}, it suffices to prove the direction \cref{thm:PHT_equiv_loc_presn-1}$\implies$\cref{thm:PHT_equiv_loc_presn-2}.
    Let $\A$ be a locally $\lambda$-presentable category.
    By \cite[1.52 Corollary]{adamek1994locally}, we can take a $\lambda$-limit (small) sketch $\mathcal{S}$ such that $\A\simeq\Mod\mathcal{S}$.
    Without loss of generality, we can assume that every selected cone in $\mathcal{S}$ has the form of either $\lambda$-ary products or pullbacks.
    We now define a $\lambda$-ary partial Horn theory $\bT$ as follows:

    The set $S$ of sorts is defined to be the set of all objects in $\mathcal{S}$.
    The $S$-sorted $\lambda$-ary signature $\Sigma$ consists of:
    \begin{itemize}
        \item
        for each morphism $s\arr[f]s'$ in $\mathcal{S}$, a function symbol $f\colon s\to s'$;
        \item
        for each selected cone $(s\arr[p_i]s_i)_{i<\alpha}$ in $\mathcal{S}$ in the form of a product,
        a function symbol $p\colon \sqcap_{i<\alpha}s_i\to s$;
        \item
        for each selected cone
        \begin{equation}\label{eq:selected_cone_pullback}
            \begin{tikzcd}
                s\arrow[d,"q_0"']\arrow[r,"q_1"] & s_1\arrow[d,"r_1"] \\
                s_0\arrow[r,"r_0"'] & t
            \end{tikzcd}\incat{\mathcal{S}}
        \end{equation}
        in the form of a pullback, a function symbol $q\colon \sqcap_{i<2}s_i\to s$.
    \end{itemize}
    The $\lambda$-ary partial Horn theory $\bT$ over $\Sigma$ consists of the following sequents:
    \begin{enumerate}[label*=\alph{enumi})]
        \item\label{eq:sketch_presheaf1}
        for each morphism $f$ in $\mathcal{S}$, the sequent $\top\seq{x}f(x)\defined$;
        \item\label{eq:sketch_presheaf2}
        for each composable pair $s\arr[f]s'\arr[f']s''$ in $\mathcal{S}$, the sequent $\top\seq{x}f'(f(x))=(f'\circ f)(x)$;
        \item\label{eq:sketch_presheaf3}
        for each object $s\in\mathcal{S}$, the sequent $\top\seq{x}x=\id_s(x)$;
        \item\label{eq:sketch_product}
        for each selected cone $(s\arr[p_i]s_i)_{i<\alpha}$ in $\mathcal{S}$ in the form of a product, the sequents
        \begin{gather*}
            \top\seq{x}p(p_i(x))_{i<\alpha}=x,\quad
            \top\seq{\tup{x}}\bigwedge_{i<\alpha}p_i(p(\tup{x}))=x_i;
        \end{gather*}
        \item\label{eq:sketch_pullback}
        for each selected cone \cref{eq:selected_cone_pullback}, the sequents
        \begin{gather*}
            \top\seq{x}q(q_0(x),q_1(x))=x,\quad
            r_0(x_0)=r_1(x_1)\seq{x_0,x_1}\bigwedge_{i<2}q_i(q(x_0,x_1))=x_i.
        \end{gather*}
    \end{enumerate}

    Then, it follows that a partial $\bT$-model is precisely a model for the limit sketch $\mathcal{S}$.
    Indeed, the Horn sequents \cref{eq:sketch_presheaf1,eq:sketch_presheaf2,eq:sketch_presheaf3} make a $\bT$-model to be a functor $\mathcal{S}\to\Set$,
    and the Horn sequents \cref{eq:sketch_product,eq:sketch_pullback} make a $\bT$-model to send each selected cone to a limit cone.
    Now, we have $\A\simeq\Mod\mathcal{S}\simeq\PMod\bT$, which completes the proof.
\end{proof}

\subsection{Validity via orthogonality}
We now associate the validity of Horn sequents with the orthogonality, which becomes a category-theoretic treatment of validity.
A special version has already appeared in \cite{bidlingmaier2018categories}.
Moreover, all the contents in this subsection are well-known for limit theories \cite[{}5.28]{adamek1994locally}.

\begin{definition}
    Let $\C$ be a category.
    \begin{enumerate}
        \item
        An object $C\in\C$ is \emph{orthogonal} to a morphism $f\colon X\to Y$ in $\C$ if for every morphism $g\colon X\to C$, there exists a unique $\hat{g}\colon Y\to C$ satisfying $\hat{g}\circ f=g$.
        \begin{equation*}
            \begin{tikzcd}[large]
                X\arrow[d,"f"']\arrow[r,"g"] & C \\
                Y\arrow[ru,"\exists !\hat{g}"',dashed] &
            \end{tikzcd}
        \end{equation*}
        \item
        Given a class $\Lambda\subseteq\mor\C$ of morphisms, denote by $\orth{\Lambda}\subseteq\C$ the full subcategory consisting of all objects orthogonal to all morphisms in $\Lambda$.
        The full subcategory $\orth{\Lambda}$ is called an \emph{orthogonality class} of $\C$,
        and called a \emph{small-orthogonality class} if $\Lambda$ is (essentially) small.\qedhere
    \end{enumerate}
\end{definition}

\begin{proposition}\label{prop:validity_for_PHL}
    Let $\bT$ be a $\lambda$-ary partial Horn theory over an $S$-sorted $\lambda$-ary signature $\Sigma$.
    Let $\phi\seq{\tup{x}}\psi$ be a $\lambda$-ary Horn sequent.
    Then the morphism
    \[
        \repn{\tup{x}.\phi}_\bT\longarr[\repn{\tup{x}}_\bT]\repn{\tup{x}.\phi\wedge\psi}_\bT \incat{\PMod\bT}
    \]
    is an epimorphism, and for any $\bT$-model $M$, the following are equivalent:
    \begin{enumerate}
        \item
        $M\vDash (\phi\seq{\tup{x}}\psi)$.
        \item\label{prop:validity_for_PHL-2}
        $M$ is orthogonal to $\repn{\tup{x}}_\bT\colon \repn{\tup{x}.\phi}_\bT\to\repn{\tup{x}.\phi\wedge\psi}_\bT$.
    \end{enumerate}
    \begin{equation*}
        \begin{tikzcd}
            \repn{\tup{x}.\phi}_\bT\arrow[r,"\forall h"]\arrow[d,"\repn{\tup{x}}_\bT"'] & M \\
            \repn{\tup{x}.\phi\wedge\psi}_\bT\arrow[ru,"\exists! \hat{h}"',dashed] &
        \end{tikzcd}
    \end{equation*}
\end{proposition}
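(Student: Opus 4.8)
The plan is to reduce both assertions to the representability established in \cref{prop:repn_obj_represents_intpn}, translating everything about the morphism $\repn{\tup{x}}_\bT$ into a statement about a plain set-theoretic inclusion. By \cref{cor:morphism_between_repn}, the morphism $\repn{\tup{x}}_\bT$ is the one corresponding to the $\bT$-terms $\tup{x}.x_i$, which are generated by $\tup{x}.\phi\wedge\psi$ (since $\phi\wedge\psi\seq{\tup{x}}x_i\defined$ is a theorem by \ref{phl:reflexivity}) and satisfy the required condition $\phi\wedge\psi\seq{\tup{x}}\phi$ trivially. For a fixed $\bT$-model $M$, I would apply the natural isomorphism of \cref{prop:repn_obj_represents_intpn} to both source and target,
\[
    \PMod\bT(\repn{\tup{x}.\phi}_\bT, M)\cong\intpn{\tup{x}.\phi}{M},
    \qquad
    \PMod\bT(\repn{\tup{x}.\phi\wedge\psi}_\bT, M)\cong\intpn{\tup{x}.\phi\wedge\psi}{M},
\]
and then identify precomposition with $\repn{\tup{x}}_\bT$, under these isomorphisms, with the canonical inclusion $\intpn{\tup{x}.\phi\wedge\psi}{M}\hookrightarrow\intpn{\tup{x}.\phi}{M}$ (the containment being immediate from the clause defining the interpretation of $\bigwedge$).

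To verify this identification, I would unwind the bijection of \cref{prop:repn_obj_represents_intpn}: a homomorphism $k\colon\repn{\tup{x}.\phi\wedge\psi}_\bT\to M$ corresponds to the tuple $\tup{m}=(k[\tup{x}.x_i]_\bT)_{i}\in\intpn{\tup{x}.\phi\wedge\psi}{M}$. Since $\repn{\tup{x}}_\bT$ acts on equivalence classes by the identity substitution, sending each generator $[\tup{x}.x_i]_\bT$ to $[\tup{x}.x_i]_\bT$, the composite $k\circ\repn{\tup{x}}_\bT$ corresponds to the very same tuple $\tup{m}$, now regarded as an element of $\intpn{\tup{x}.\phi}{M}$. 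This shows the relevant square commutes and that the precomposition map is exactly the inclusion.

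With this translation in hand, both claims follow formally. An inclusion of sets is always injective, and a morphism is an epimorphism precisely when precomposition with it is injective on hom-sets into every object; hence $\repn{\tup{x}}_\bT$ is an epimorphism. For the equivalence, observe that $M$ is orthogonal to $\repn{\tup{x}}_\bT$ exactly when the precomposition map is a bijection (existence yields surjectivity, uniqueness yields injectivity). As this map is the inclusion, bijectivity amounts to the equality $\intpn{\tup{x}.\phi\wedge\psi}{M}=\intpn{\tup{x}.\phi}{M}$, equivalently $\intpn{\tup{x}.\phi}{M}\subseteq\intpn{\tup{x}.\psi}{M}$, which is by definition $M\vDash(\phi\seq{\tup{x}}\psi)$.

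I do not expect a serious obstacle here: the entire content lies in the translation step, and the only point demanding care is checking that the square relating the two natural isomorphisms to the set inclusion genuinely commutes, i.e.\ tracking how $\repn{\tup{x}}_\bT$ acts on the classes of generators. Once the precomposition map is pinned down as an ordinary inclusion, the epimorphism statement and the orthogonality equivalence are immediate.
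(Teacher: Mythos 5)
Your proposal is correct and follows essentially the same route as the paper's proof: both reduce everything to \cref{prop:repn_obj_represents_intpn}, under which precomposition with $\repn{\tup{x}}_\bT$ becomes the inclusion $\intpn{\tup{x}.\phi\wedge\psi}{M}\hookrightarrow\intpn{\tup{x}.\phi}{M}$, giving the epimorphism claim and the equivalence of orthogonality with $\intpn{\tup{x}.\phi}{M}\subseteq\intpn{\tup{x}.\psi}{M}$. The only difference is that you spell out the identification of the precomposition map (via \cref{cor:morphism_between_repn} and the action on generators) that the paper treats as immediate.
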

\begin{proof}
    \cref{prop:repn_obj_represents_intpn} ensures that $\repn{\tup{x}}_\bT$ is an epimorphism; hence the uniqueness of $\hat{h}$ in \cref{prop:validity_for_PHL-2} always holds.
    The existence of $\hat{h}$ for every $h$ is clearly equivalent to $\intpn{\tup{x}.\phi}{M}\subseteq\intpn{\tup{x}.\phi\wedge\psi}{M}$.
\end{proof}

By \cref{prop:validity_for_PHL}, we get the following corollary:

\begin{corollary}
    Let $\bT_0\subseteq\bT_1$ be $\lambda$-ary partial Horn theories over an $S$-sorted $\lambda$-ary signature $\Sigma$.
    Then, the full subcategory $\PMod\bT_1\subseteq\PMod\bT_0$ is a small-orthogonality class.
\end{corollary}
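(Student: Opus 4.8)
The plan is to apply \cref{prop:validity_for_PHL} to the smaller theory $\bT_0$, once for each Horn sequent belonging to $\bT_1$. Since $\bT_0\subseteq\bT_1$, every $\bT_1$-model is a $\bT_0$-model, so $\PMod\bT_1$ is genuinely a full subcategory of $\PMod\bT_0$; the task is to exhibit a small class of morphisms in the ambient category $\PMod\bT_0$ whose orthogonality class is exactly $\PMod\bT_1$.

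For each Horn sequent $(\phi\seq{\tup{x}}\psi)\in\bT_1$, I would form the morphism
\[
    \repn{\tup{x}}_{\bT_0}\colon \repn{\tup{x}.\phi}_{\bT_0}\longrightarrow \repn{\tup{x}.\phi\wedge\psi}_{\bT_0} \incat{\PMod\bT_0}
\]
built from representing $\bT_0$-models (crucially \emph{not} $\bT_1$-models), so that these are honest morphisms of $\PMod\bT_0$. Let $\Lambda$ be the collection of all such morphisms as $(\phi\seq{\tup{x}}\psi)$ ranges over $\bT_1$. Because a $\lambda$-ary partial Horn theory is by definition a \emph{set} of Horn sequents, the indexing is by a set and hence $\Lambda$ is (essentially) small.

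The next step is to invoke \cref{prop:validity_for_PHL} with $\bT\coloneqq\bT_0$: for every $\bT_0$-model $M$ and every sequent $(\phi\seq{\tup{x}}\psi)\in\bT_1$, orthogonality of $M$ to the corresponding morphism of $\Lambda$ is equivalent to $M\vDash(\phi\seq{\tup{x}}\psi)$. Quantifying over all sequents of $\bT_1$, a $\bT_0$-model $M$ lies in $\orth{\Lambda}$ if and only if it validates every sequent of $\bT_1$, i.e. $M\in\PMod\bT_1$. Therefore $\PMod\bT_1=\orth{\Lambda}$ inside $\PMod\bT_0$, exhibiting it as a small-orthogonality class.

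There is essentially no hard step here, since the content is entirely carried by \cref{prop:validity_for_PHL}. The only points demanding care are bookkeeping ones: first, that the representing models must be taken with respect to $\bT_0$ rather than $\bT_1$ so that $\Lambda$ consists of morphisms of the correct category; and second, that it suffices to test validity of all of $\bT_1$, the sequents already lying in $\bT_0$ being automatically satisfied by any $\bT_0$-model (so they impose no extra constraint, though including them does no harm).
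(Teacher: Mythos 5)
Your proof is correct and is exactly the argument the paper intends: the paper derives this corollary directly from \cref{prop:validity_for_PHL} by taking, for each sequent of $\bT_1$, the epimorphism $\repn{\tup{x}.\phi}_{\bT_0}\to\repn{\tup{x}.\phi\wedge\psi}_{\bT_0}$ between representing $\bT_0$-models, just as you do. Your bookkeeping remarks (representing models formed over $\bT_0$, smallness because $\bT_1$ is a set) are precisely the details the paper leaves implicit.
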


\subsection{Theory morphisms}
In this subsection, we discuss morphisms between partial Horn theories, which contain morphisms of relative algebraic theories (\cref{def:morphism_rat}) as their special case.
We will denote by $(S,\Sigma,\bT)$ a $\lambda$-ary partial Horn theory $\bT$ over an $S$-sorted $\lambda$-ary signature $\Sigma$.

\begin{definition}\label{def:theory_morphism_pht}
    Consider two $\lambda$-ary partial Horn theories $(S,\Sigma,\bT)$ and $(S',\Sigma',\bT')$.
    A \emph{($\lambda$-ary) theory morphism}
    \[
        \rho\colon (S,\Sigma,\bT)\to(S',\Sigma',\bT')
    \]
    consists of:
    \begin{itemize}
        \item
        a map $S\ni s\mapsto s^\rho\in S'$;
        \item
        an assignment to each function symbol $f\colon \sqcap_{i<\alpha}s_i\to s$ in $\Sigma$,
        a $\lambda$-ary term $\tup{x}^\rho.f^\rho$ of sort $s^\rho$ over $\Sigma'$, where $\tup{x}^\rho=(x_i^\rho\ofsort s_i^\rho)_{i<\alpha}$;
        \item
        an assignment to each relation symbol $R\colon \sqcap_{i<\alpha}s_i$ in $\Sigma$,
        a $\lambda$-ary Horn formula $\tup{x}^\rho.R^\rho$ in $\Sigma'$
    \end{itemize}
    such that for every sequent $\phi\seq{\tup{x}}\psi$ in $\bT$, the \emph{$\rho$-translation} $\phi^\rho\seq{\tup{x}^\rho}\psi^\rho$ is a $\PHL_\lambda$-theorem of $\bT'$.
    The \emph{$\rho$-translation} $\phi^\rho$ and $\psi^\rho$ are constructed by replacing all symbols that $\phi$ and $\psi$ include by $\rho$.
\end{definition}

\begin{definition}
    Let $\rho\colon (S,\Sigma,\bT)\to(S',\Sigma',\bT')$ be a theory morphism between $\lambda$-ary partial Horn theories.
    For every $\bT'$-model $M$, we get a $\bT$-model $U^\rho M$ as follows:   
    \begin{itemize}
        \item
        For each sort $s\in S$, define $(U^\rho M)_s\coloneq M_{s^\rho}$;
        \item
        For each function symbol $f$ in $\Sigma$, define $\intpn{f}{U^\rho M}\coloneq\intpn{\tup{x}^\rho.f^\rho}{M}$;
        \item
        For each relation symbol $R$ in $\Sigma$, define $\intpn{R}{U^\rho M}\coloneq\intpn{\tup{x}^\rho.R^\rho}{M}$.
    \end{itemize}
    The assignment $M\mapsto U^\rho M$ yields a functor $U^\rho\colon\PMod\bT'\to\PMod\bT$, which is called the \emph{$\rho$-translation functor}.
\end{definition}

\begin{theorem}\label{thm:adjunction_induced_by_theory_mor}
    For a theory morphism $\rho\colon (S,\Sigma,\bT)\to(S',\Sigma',\bT')$, the translation functor $U^\rho$ has a left adjoint $F^\rho$.
    \begin{equation*}
        \begin{tikzcd}[large]
            \PMod\bT\arrow[r,"F^\rho",shift left=7pt]\arrow[r,"\perp"pos=0.5,phantom] &[10pt]\PMod\bT'\arrow[l,"U^\rho",shift left=7pt]
        \end{tikzcd}
    \end{equation*}
\end{theorem}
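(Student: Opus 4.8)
The plan is to invoke the adjoint functor theorem for locally presentable categories. Since both $\PMod\bT$ and $\PMod\bT'$ are locally $\lambda$-presentable by \cref{thm:PMod_is_loc_presn}, it suffices to show that $U^\rho$ preserves all small limits and all $\lambda$-filtered colimits; then the accessible-functor version of the adjoint functor theorem \cite[1.66 Theorem]{adamek1994locally} produces the desired left adjoint $F^\rho$. The whole argument reduces the behaviour of $U^\rho$ to the level of underlying sorted sets.

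The key observation is that $U^\rho$ commutes with the forgetful functors. Let $V\colon\PMod\bT\to\Set^S$ and $V'\colon\PMod\bT'\to\Set^{S'}$ send each model to its underlying sorted set, and let $\Delta\colon\Set^{S'}\to\Set^S$ be the reindexing functor given by precomposition with $s\mapsto s^\rho$. Since $(U^\rho M)_s=M_{s^\rho}$ by definition, we have $V\circ U^\rho=\Delta\circ V'$. First I would record that $V$ and $V'$ both \emph{create} small limits and $\lambda$-filtered colimits: limits are computed sort-wise as in $\Set$ (as used in the proof of \cref{thm:PMod_is_loc_presn}), and $\lambda$-filtered colimits are computed sort-wise as in $\Set$ by \cref{rem:filtered_colim_partial_model}. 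Moreover $\Delta$, being precomposition between product categories, preserves all limits and colimits.

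Next I would deduce that $U^\rho$ preserves limits and $\lambda$-filtered colimits. Fix a diagram $M_\bullet$ of the appropriate shape with (co)limit $L$ in $\PMod\bT'$. Applying $V\circ U^\rho=\Delta\circ V'$, and using that $V'$ preserves the (co)limit while $\Delta$ preserves it as well, one finds that $V(U^\rho L)$, equipped with its canonical (co)cone, is the (co)limit of $V\circ U^\rho M_\bullet$ in $\Set^S$. Because $V$ \emph{creates} this (co)limit, the uniqueness of the lift forces $U^\rho L$ together with its canonical (co)cone to be the (co)limit of $U^\rho M_\bullet$ in $\PMod\bT$. Hence $U^\rho$ preserves all small limits and all $\lambda$-filtered colimits, so in particular it is accessible and continuous, and \cite[1.66 Theorem]{adamek1994locally} applies.

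The only point demanding genuine care—the main, though mild, obstacle—is the claim that $V$ and $V'$ create limits and $\lambda$-filtered colimits compatibly with the $\rho$-translated interpretations. Concretely, one must check that the interpretation of each translated symbol $f^\rho$ and $R^\rho$ in a sort-wise (co)limit agrees with the (co)limit of the pointwise interpretations; for filtered colimits this is exactly the sort-wise computation of \cref{rem:filtered_colim_partial_model} together with the fact, established in the proof of \cref{thm:PMod_is_loc_presn}, that every interpretation functor $\intpn{\tup{x}.\phi}{\bullet}$ preserves $\lambda$-filtered colimits, while for limits it follows from the sort-wise construction of limits in $\Set$. Once this compatibility is in place, the abstract argument above goes through verbatim and yields $F^\rho\dashv U^\rho$.
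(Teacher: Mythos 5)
Your proposal is correct in substance but takes a genuinely different route from the paper. The paper never invokes an adjoint functor theorem: using \cref{thm:PMod_is_loc_presn} and \cref{thm:repn_enumerates_presn}, it constructs $F^\rho$ explicitly on the dense subcategory of representing models via the syntactic identity $\intpn{\tup{x}.\phi}{U^\rho M}=\intpn{\tup{x}^\rho.\phi^\rho}{M}$ together with representability (\cref{prop:repn_obj_represents_intpn}), setting $F^\rho \repn{\tup{x}.\phi}_\bT:=\repn{\tup{x}^\rho.\phi^\rho}_{\bT'}$ and extending along $\lambda$-filtered colimits (``relative adjunctions''). You instead verify that $U^\rho$ preserves small limits and $\lambda$-filtered colimits and cite the accessible adjoint functor theorem. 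Both work; the paper's construction buys an explicit formula for $F^\rho$ on $\lambda$-presentable objects, which is genuinely needed later (the proof of \cref{thm:birkhoff_partialHorn} uses exactly this value of $F^\rho$ on representing models), whereas your argument is non-constructive but records along the way the useful facts that $U^\rho$ is continuous and $\lambda$-accessible (the latter is also quietly used in the proof of \cref{thm:birkhoff_partialHorn}).

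One step of your argument is mis-justified as stated: the forgetful functors $V,V'$ do \emph{not} create limits or $\lambda$-filtered colimits in the strict sense, because lifts of (co)cones are not unique --- the interpretation of a relation symbol (or the domain of a partial operation) on the (co)limit sorted set can be shrunk (for limits) or enlarged (for colimits) while every leg remains a homomorphism; for the same reason $V$ does not reflect (co)limits, so ``$V(U^\rho L)$ is a limit, hence $U^\rho L$ is a limit'' is not a valid inference by itself. What is true, and what your final paragraph in effect supplies, is that (co)limits in $\PMod\bT$ and $\PMod\bT'$ are computed by canonical sort-wise formulas, and that interpretations of all $\Sigma'$-terms and $\Sigma'$-Horn formulas (in particular of $f^\rho$ and $R^\rho$) commute with those formulas --- by induction on terms and formulas, using \cref{rem:filtered_colim_partial_model} and the preservation of $\lambda$-filtered colimits by interpretation functors for the colimit case. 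Once this is established, $U^\rho L$ literally carries the canonical (co)limit structure, so $U^\rho$ preserves the (co)limit and your appeal to the adjoint functor theorem goes through. So the gap is reparable and essentially terminological: replace ``creation'' by this direct comparison of structures.
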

\begin{proof}
    By \cref{thm:PHT_equiv_loc_presn,thm:repn_enumerates_presn}, from the viewpoint of relative adjunctions, it suffices to determine where the left adjoint $F^\rho$ sends the representing models.
    Take an arbitrary Horn formula $\tup{x}.\phi$.
    For every $\bT'$-model $M$, we have the following natural isomorphism:
    \begin{equation*}
        \PMod\bT(\repn{\tup{x}.\phi}_\bT, U^\rho M)
        \cong \intpn{\tup{x}.\phi}{U^\rho M}
        = \intpn{\tup{x}^\rho.\phi^\rho}{M}
        \cong \PMod\bT'(\repn{\tup{x}^\rho.\phi^\rho}_{\bT'}, M)
    \end{equation*}
    This leads us to define $F^\rho \repn{\tup{x}.\phi}_\bT\coloneq\repn{\tup{x}^\rho.\phi^\rho}_{\bT'}$.
\end{proof}

\printbibliography

\end{document}